\renewcommand{\v}[1]{\ensuremath{\mathbf{#1}}}
\newcommand{\mc}{\mathcal}
\def\st{{\rm s.t.}}
\newcommand{\be}{\begin{enumerate}}
\newcommand{\ee}{\end{enumerate}}
\newtheorem{proposition}{Proposition}
\newcommand{\pf}{\textbf{Proof} \indent}
\newcommand{\qed}{\hfill $\Box$}
\newcommand{\proof}{\pf}
\renewcommand{\Re}{\mathbb{R}} 
\newcommand{\Z}{\mathbb{Z}} 
\newcommand{\vgamma}{\boldsymbol{\gamma}}
\newcommand{\vlambda}{\boldsymbol{\lambda}}
\newcommand{\vmu}{\boldsymbol{\mu}}
\newcommand{\vnu}{\boldsymbol{\nu}}
\newcommand{\vpi}{\boldsymbol{\pi}}
\newcommand{\IN}{\textrm{in}}
\newcommand{\OUT}{\textrm{out}}
\newcommand{\Nblend}{\mc{N}^{\textrm{pool}}}
\newcommand{\Npool}{\mc{N}^{\textrm{pool}}}
\newcommand{\Nin}{\mc{N}^{\IN}}
\newcommand{\Nout}{\mc{N}^{\OUT}}
\newcommand{\w}[1]{w_{1}#1,\dots,w_{|\mathcal{J}|}#1}
\newcommand{\x}[1]{\v{x}_{1}#1,\dots,\v{x}_{|\mathcal{J}|}#1}
\newcommand{\hatw}[1]{\hat{w}_{1}#1,\dots,\hat{w}_{|\mathcal{J}|}#1}
\newcommand{\hatx}[1]{\hat{\v{x}}_{1}#1,\dots,\hat{\v{x}}_{|\mathcal{J}|}#1}
\newcommand{\Players}{\mc{J}}
\newcommand{\cmt}[1]{#1}
\newcommand{\cmtt}[1]{#1}
\title{Pooling problems under perfect and imperfect competition}
\author{Dimitri J. Papageorgiou$^1$, Stuart M. Harwood$^1$, Francisco Trespalacios$^2$ \\
\\
{\small $^1$ExxonMobil Technology and Engineering Company -- Research}\\
{\small 1545 Route 22 East, Annandale, NJ 08801 USA}\\
{\small $^2$ExxonMobil Technology and Engineering Company -- Engineering}\\
{\small 22777 Springwoods Village Parkway, Spring, TX 77389 USA}\\
{\small \{dimitri.j.papageorgiou,stuart.m.harwood,francisco.trespalacios\}@exxonmobil.com} \\
}
\begin{document}

\maketitle

\begin{abstract}
We investigate pooling problems in which multiple players vie with one another to maximize individual profit in a non-cooperative competitive market. This competitive setting is interesting and worthy of study because the majority of prevailing process systems engineering models largely overlook the non-cooperative strategies that exist in real-world markets. In this work, each player controls a processing network involving intermediate tanks (or pools) where raw materials are blended together before being further combined into final products. Each player then solves a pure or mixed-integer bilinear optimization problem whose profit is influenced by other players. We present several bilevel formulations and numerical results of a novel decomposition algorithm. 

\vspace{2mm}
\noindent \textbf{keywords:} bilevel optimization, blending, equilibrium modeling, game theory, pooling. 
\end{abstract}

\section{Introduction} \label{sec:intro}

Pooling problems arise in myriad industrial applications and have consequently been the subject of numerous papers, surveys \cite{gupte2017relaxations,misener2009advances}, and theses \cite{alfaki2012models,gupte2012mixed,misener2012novel,pham2010global}.  
The prototypical pooling problem involves an acyclic network containing 
a set of input/supply nodes where raw materials enter the system, 
a set of intermediate nodes representing blending tanks (or \textit{pools}), and 
a set of output nodes where final products exit the system.  Directed arcs connect nodes and represent
possible flows.
There are typically constraints on the qualities or specifications of the streams flowing into various nodes, such as bounds on chemical and physical properties. 
Additional constraints on feedstock availability, storage capacity, demand, and product specifications are often included.
Thanks to their challenging nonconvex structures, they have also spurred noteworthy improvements in global and mixed-integer nonlinear programming (MINLP) optimization algorithms and solvers.  

Virtually all existing literature on the pooling problem has started from the vantage point of a single player (e.g., a planner, scheduler, or generic decision maker) who receives deterministic values for raw material costs, arc-flow costs, and prices for final goods.
In this work, we deviate from this traditional ``single-player" mindset and present a competitive pooling problem in an interactive market setting.
As a consequence, we spend little time identifying the best way to cast a particular pooling problem and instead turn our attention to modeling the dynamics of a multi-player non-cooperative game in which one player's decisions affect the prices seen by its competitors.  

This competitive setting is novel for the pooling problem and has strategic appeal as it seeks to represent the simultaneous optimization problems of multiple interacting decision makers.  In economics and energy systems modeling, this approach is commonly referred to as a bottom-up (or dis-aggregate) approach, meaning that the model attempts to capture the behavior of each market participant rather than assume aggregate market characteristics.  
This bottom-up modeling approach is interesting as it can be used in several ways.
First, it allows a market participant to optimize its decisions relative to those of other players, rather than assuming fixed demand.
Second, it can be used by policy makers attempting to, for example, impose a price on emissions to dis-incentivize players from producing certain final products. 

We first consider a perfect competition setting in which multiple players act as price takers who cannot anticipate the impact of their supply quantity on the market price. This model is of interest when many players are assumed, each with relatively small market power. 
We then investigate an imperfect competition setting when the suppliers are depicted as Nash-Cournot players competing against one another to maximize profit.  In a Nash-Cournot setting, each player acts as a ``price maker" or ``price setter" because they have additional information, namely, they can anticipate the market price of goods as a function of the quantities supplied by all players.  This setting is justified when there are relatively few players each wielding non-negligible market power.   

\subsection{Pooling problem literature review}

Pooling problems have been extensively studied in the process systems engineering and mathematical optimization literature as they are foundational in numerous applications, including petrochemical refining \cite{pham2010global}, wastewater treatment \cite{galan1998optimal}, mining \cite{boland2015special}, copper blending \cite{song2018scheduling}, and more. 
Misener and Floudas \cite{misener2009advances} attempt to categorize pooling problems into five broad classes based on the form of the nonconvexities that arise in the resulting optimization model:
\begin{enumerate}
\item The \textit{standard pooling problem} is a static (i.e., it has no time dimension) continuous bilinear NLP, where the complicating bilinearities arise due to a linear blending rule assumption for all streams that enter a pool.
\item The \textit{generalized pooling problem} is a mixed-integer bilinear program (MIBLP) in which inter-pool connections are permitted and may include discrete decisions related to network design choices.
\item The \textit{extended pooling problem}, introduced in Gounaris and Floudas \cite{gounaris2007formulation}, is a general mixed-integer nonlinear program (MINLP) that appends emissions constraints to the standard pooling problem.
\item The \textit{nonlinear blending problem} is nonconvex NLP that relaxes the assumption that all streams are blended linearly.
\item The \textit{crude oil operations problem} (or front-end crude scheduling problem as it applies to the front-end of a refinery) is a dynamic nonconvex MINLP.
\end{enumerate}
Note that the first four categories consider static problems, i.e., they have no time dimension. 
Meanwhile, dynamic or time-indexed models are often called for in scheduling applications where streams flow from tank to tank over multiple time periods.  This occurs in numerous refining applications. 

Gupte et al. \cite{gupte2015pooling} also offers a categorization with an emphasis on the type of formulation.  They highlight the ``concentration," ``proportion," and an augmented ``proportion" formulation, which are also known under their more cryptic names as the $\mathbb{P}$, $\mathbb{Q}$, and $\mathbb{PQ}$ formulations, respectively. Alfaki and Haugland \cite{alfaki2013strong} and Boland et al. \cite{Boland2016} proposed several multi-commodity flow formulations for the continuous version of the pooling problem.

Other important research efforts include Audet et al. \cite{audet2004pooling}, Misener and Floudas \cite{misener2009advances}, and Misener at al. \cite{misener2011apogee}.
More recently, Ceccon and Misener \cite{ceccon2021solving} attempt to solve large-scale pooling problems with an open-source solver GALINI.  Luedtke et al. \cite{luedtke2020strong} present convex nonlinear relaxations of the pooling problem to help achieve superior bounds.
Li et al. \cite{li2011stochastic} considered the design and operation of natural gas production networks in which input specification levels are treated as uncertain and handled within a two-stage stochastic programming model.

In all of these models, the cost or revenue of each output stream is assumed given or is specified as a deterministic parameter or through a probability distribution or uncertainty set.  In this work, we deviate from this assumption by assuming that the unit revenue for each commodity is determined in a competitive market consisting of multiple players each solving their own pooling problem. In a perfect competition setting, each producer is a price taker where the price is a market clearing price.  In an imperfect Nash-Cournot competition setting, each producer knows the market's inverse demand function and attempts to maximize profit given its knowledge about this function and its perception of how other producers will behave.

\subsection{Modeling competitive markets}

We suppose throughout that there are $N$ players indexed by $j \in \mc{J}=\{1,\dots,N\}$, each trying to optimize their individual decisions. In particular, suppose player $j$ controls the vector $\v{x}_j \in \Re^{n_j}$ of $n_j$ decision variables and is trying to maximize her profit function $p_j: \Re^{n_j} \times \Re^{d} \mapsto \Re$ that depends on her decision $\v{x}_j$ and a vector $\vpi \in \Re^d$.
Then, player $j$'s optimization problem can be stated as    
\begin{equation} \label{model:intro_agent_problem_Nash}
p_{j}^*(\vpi) = \max \left\{p_j(\v{x}_j,\vpi): \v{x}_j \in \mc{X}_j \right\}.
\end{equation}
Correspondingly, let $\mc{S}_j(\vpi)$ denote player $j$'s set of optimal solutions given $\vpi$, i.e., 
\begin{equation}
\mc{S}_j(\vpi) = \arg \max \left\{p_j(\v{x}_j,\vpi): \v{x}_j \in \mc{X}_j \right\}.
\end{equation}
In addition, we assume that $\vpi$ and the players' decisions are linked via constraints in the set
$\Pi \subseteq \Re^d \times \Re^n$, where $n = \sum_{j \in \mc{J}} n_j$.
Then, an equilibrium of this multi-player game is a point 
$(\vpi,\v{x})=(\vpi,\v{x}_1,\dots,\v{x}_N)$
such that $(\vpi,\v{x}) \in \Pi$ and $\v{x}_j \in \mc{S}_j(\vpi)$ for all players $j \in \mc{J}$.

Note that this setup includes the special case when $\vpi$ simply comprises all players' decisions, i.e., when
$\Pi = \{ (\vpi_1,\dots,\vpi_N,\v{x}_1,\dots,\v{x}_N) \in \Re^{2n} : \vpi_j = \v{x}_j ~\forall j \in \mc{J} \}$.
In this special case, one could just as well forgo introducing $\vpi$ and define player $j$'s optimal decision set as
\begin{equation*}
\tilde{\mc{S}}_j(\v{x}_{-j}) = \arg \max \left\{\tilde{p}_j(\v{x}_j,\v{x}_{-j}): \v{x}_j \in \mc{X}_j \right\},
\end{equation*} 
where a tilde is used to differentiate from our original notation.
In this special case, a Nash equilibrium is a point $\v{x}=(\v{x}_1,\dots,\v{x}_N)$ such that $\v{x}_j \in \tilde{\mc{S}}_j(\v{x}_{-j})$ for all players $j \in \mc{J}$.
As will become clear, we prefer to include $\vpi$ to allow for external constraints and more complex interactions between the players. 
A proof of the equivalence between these two representations is given in \cite{harwood2021equilibrium}.

With this setup established, we now describe some relevant previous work in competitive games and modeling approaches thereof. 
Ruiz et al. \cite{ruiz2014tutorial} present a number of interesting market equilibrium models germane to the energy sector. The thrust of their tutorial is on how  complementarity models can be used to model the interactions between players.  While a complementarity approach is a propos when each player is optimizing over ``nice" sets 
as they require the existence of Karush-Kuhn-Tucker (KKT) optimality conditions, they are limiting and/or insufficient when more complicated lower-level feasible regions are present. For example, if players possess feasible regions defined by non-differentiable functions, then KKT conditions are neither necessary nor sufficient and are therefore inadequate. In the context of pooling problems, if complex EPA emissions constraints (which include nonconvex, non-smooth functions) are included in a player's optimization model, then complementarity models are likely not appropriate. 

In economics, a common approach to finding a Nash equilibrium to this multi-player game is to rely on KKT optimality conditions to derive a complementarity system. Specifically, the economics community typically assumes that each player's feasible region can be expressed as $\mc{X}_j = \left\{ \v{x}_j \in \Re^{n_j}: \v{g}_j(\v{x}_j) \leq \v{0}, \v{h}_j(\v{x}_j) = \v{0}, \v{x}_j \geq \v{0} \right\}$ where the vector-valued functions $\v{g}_j: \Re^{n_j} \mapsto \Re^{m_j}$ and $\v{h}_j: \Re^{n_j} \mapsto \Re^{k_j}$ are continuously differentiable. Under these assumptions, the KKT conditions for \eqref{model:intro_agent_problem_Nash} become 
\begin{subequations} \label{model:generic_nlp_kkt}
\begin{alignat}{4}
\nabla_{\v{x}_j} L_j = \nabla_{\v{x}_j} p_j(\v{x}_j,\vpi) 
+ \vlambda_j^{\top}\nabla_{\v{x}_j} \v{g}_j(\v{x}_j) 
+ \vmu_j^{\top}\nabla_{\v{x}_j} \v{h}_j(\v{x}_j)
- \vnu_j = \v{0} & & \label{eq:partial_Lagrangian} \\ 
\v{0} \leq -\v{g}_j(\v{x}_j) \perp \vlambda_j \geq \v{0} & & \label{eq:gj_perp_lambdaj} \\
\v{0} = \v{h}_j(\v{x}_j)~,~\vmu_j \in \Re^{k_j} & & \label{eq:hj_perp_muj} \\
\v{0} \leq \v{x}_j \perp \vnu_j \geq \v{0} & &~ \label{eq:xj_perp_nuj},
\end{alignat}
\end{subequations}
where $L_j$ is the Lagrangian function for player $j$ and the ``perp" symbol $\perp$ is shorthand for complementarity. For example, $\v{0} \leq -\v{g}_j(\v{x}_j) \perp \vlambda_j \geq \v{0}$ is a compact way of expressing the three conditions $\v{g}_j(\v{x}_j) \leq \v{0}$, $\vlambda_j \geq \v{0}$, and $\v{g}_j(\v{x}_j)^{\top}\vlambda_j = \v{0}$.  
Of course, these conditions are only valid if $p_j$, $\v{g}_j$, and $\v{h}_j$ are continuously differentiable, which is not necessarily the case in the generalized pooling problem setting.
Moreover, even if $p_j$, $\v{g}_j$, and $\v{h}_j$ are continuously differentiable, they may not be convex, in which case these conditions are necessary, but not sufficient.  Indeed, the pooling problems considered in this work have nonconvex feasible regions. 

To handle discretely-constrained equilibrium problems with convex players akin to those stated above, i.e., the continuous relaxation of each player's problem is convex, Gabriel et al. \cite{Gabriel2013} offer a heuristic that provides a compromise between complementarity and integrality.  The approach works ``by first relaxing the discretely-constrained variables to their continuous analogs, taking KKT conditions for this relaxed problem, converting these conditions to disjunctive-constraints form (Fortuny-Amat and McCarl 1981), and then solving them along with the original integer restrictions re-inserted in a mixed-integer, linear program (MILP). The integer conditions are then further relaxed, but targeted using penalty terms in the objective function. This MILP relaxes both complementarity and integrality but tries to find minimum deviations for both" \cite[p.308]{Gabriel2013}. Fuller and \c{C}elebi \cite{Fuller2017436} suggest another approach for handling discrete decisions.

Equilibrium problems can also be modeled as bilevel optimization problems.
For this work, we adapt the approach of Harwood et al. \cite{harwood2021equilibrium}, which introduces an exact method for equilibrium problems with nonconvex structures. 
Meanwhile, two commonly used bilevel models are: (1) Mathematical programs with equilibrium constraints (MPECs), which are optimization problems whose constraints include equilibrium conditions (e.g., KKT conditions of a lower-level optimization problem or Nash-Cournot game). (2) Equilibrium problems with equilibrium constraints (EPECs), which typically arise when there are multiple leaders in a Stackelberg game.  We do not pursue these objects in this work, as they suffer from the same drawbacks as the complementarity approaches when players are nonconvex.  
Colson et al. \cite{colson2007overview} provide an overview of bilevel optimization with some connections to equilibrium modeling.

\subsection{Contributions}
The contributions of this paper are:
\begin{enumerate}
\item We introduce a novel pooling application that couples the classic pooling problem with a non-cooperative equilibrium modeling framework in which multiple producers compete to maximize their individual profit.
\item We present perfect and imperfect competition models and formulate them as bilevel feasibility problems with lower-level nonconvex structures.
\item We apply a provably optimal bilevel decomposition algorithm to minimize disequilibrium. We showcase how our minimum disequilibrium approach is able to rigorously and systematically determine that no equilibrium exists, when this is the case, while returning the total disequilibrium (a strictly positive scalar) as a certificate of non-existence.  We compare this against a complementarity heuristic, which not only fails to provide a certificate of non-existence, but worse, fails to identify that no equilibrium exists.
\item We highlight the nuances that result when contrasting a traditional deterministic approach, a perfect competition, and Nash-Cournot setting, with Haverly-esque pooling players.
\end{enumerate}

It is worth mentioning what is not considered in this paper. First, we restrict our attention to a static setting (no time dimension) despite the fact the models introduced are applicable for dynamic (time-indexed) problems as well, albeit with some care for inventory management and valuation.  Second, given the expansive literature on formulating and solving an individual pooling problem, we make no attempt whatsoever to improve upon these formulations. In fact, we rely on a standard $\mathbb{P}$-formulation in our presentation and numerical studies. Note, however, that all of our competitive models carry over to other popular pooling problem formulations. 
We only consider \textit{linear} inverse demand functions as they offer a practically important first step \cite{ruiz2014tutorial}.

The remainder of this paper is organized as follows:
Section~\ref{sec:competitive_pooling_models} introduces the generalized pooling problem considered for each player as well as the perfect and imperfect competition settings that we study. 
Section~\ref{sec:review} reviews some fundamental concepts of bilevel optimization and complementarity that we employ for finding equilibria. Special attention is given to the difficulties associated with complementarity approaches when nonconvexities are present.
Section~\ref{sec:algorithms} discusses algorithms for solving competitive pooling problems heuristically and to global optimality.
Section~\ref{sec:results} presents numerical results on various instances.
Conclusions and future research directions are highlighted in Section~\ref{sec:conclusions}.

\newpage
\section{Competitive pooling models} \label{sec:competitive_pooling_models}

\subsection{Nomenclature}
\begin{tabular}{ll}
\toprule
& \textbf{Definition} \\
\hline
\textbf{Sets} & \\
$i,j \in \mc{J}$ & set of players; $\mc{J} = \{1,\dots,N\}$ \\
$k \in \mc{K}$ & set of specifications (specs) or qualities \\
$l \in \mc{L}_j$ & player $j$'s set of discrete decisions \\
$n \in \Nin_j$ & set of input nodes where raw materials (input streams) enter player $j$'s network \\
$n \in \Nout_j$ & set of output nodes where finished goods (output streams) exit player $j$'s network \\
$n \in \Npool_j$ & set of intermediate pool nodes (\textit{pools}) in which streams are pooled together and blended \\
$n \in \mc{N}_j$ & set of nodes in player $j$'s network \\
$(n,n') \in \mc{A}_j$ & set of arcs (node-to-node connections) in player $j$'s network \\
$n \in \Nout$ & set of all output nodes; $\Nout = \cup_{j \in \mc{J}} \Nout_j$ \\
$\mc{X}_j ~(\mc{X}^C_j)$ & player $j$'s feasible region (the continuous relaxation of player $j$'s feasible region) \\
$\Pi$ & global feasibility set linking $\vpi$ and individual player decisions $\v{x}_j$; $(\vpi,\x{}) \in \Pi$ \\
\textbf{Parameters} &  \\
$\alpha_{n}$~$(\beta_{n})$ & inverse demand function intercept (slope) for the output stream associated with node $n \in \Nout$ \\
$\eta$~/~$\eta^L$~$(\eta^U)$ & minimum disequilibrium / lower (upper) bound on minimum disequilibrium \\
$C_{jn}^{\textrm{var}}$  & variable cost for player $j$ to purchase one unit of raw material $n \in \Nin_j$ \\
$C_{jn}^{\textrm{fixed}}$  & fixed cost for player $j$ to purchase any positive amount of raw material $n \in \Nin_j$ \\
$C_{nk}^{\IN}$  & concentration of spec $k$ in the raw material entering at player $j$'s node $n \in \Nin_j$ \\
$C_{nk}^{\min}$ ($C_{nk}^{\max}$) & minimum (maximum) permissible concentration of spec $k$ in a finished good at output node $n \in \Nout$ \\
$F_{jn}^{\min}$ ($F_{jn}^{\max}$) & minimum (maximum) amount of flow through player $j$'s node $n \in \mc{N}$ \\
\textbf{Variables} & \\
$c_{jnk}$ & (continuous) concentration of player $j$'s spec $k$ in output flow at blend node $n \in \Nblend$ \\
$f_{jn}^{\IN}$ & (continuous) flow into (amount purchased at) player $j$' input node $n \in \Nin_j$ \\
$f_{jn}^{\OUT}$ & (continuous) flow out of (amount sold at) player $j$'s output node $n \in \Nout_j$ \\
$f_{jnn'}$ & (continuous) flow on player $j$'s arc $(n,n') \in \mc{A}_j$ \\
$q_{n}$ & (continuous) quantity consumed of the output stream associated with node $n \in \Nout$ \\
$u_{jn}$ & (binary) takes value 1 if there is flow into player $j$'s input node $n \in \Nin_j$; 0 otherwise \\
$\v{x}_{j} \in \Re^{n_j}$ & generic decision vector for player $j \in \mc{J}$ \\
$\v{x}_{-j} \in \Re^{n-n_j}$ & generic vector of decisions for all players except player $j \in \mc{J}$ \\
$\vpi \in \Re^d$ & upper-level decision vector (e.g., price vector), sometimes treated as a parameter \\
\textbf{Functions} & \\
$\v{g}_j(\v{x}_j)$ & player $j$'s inequality constraints; $\v{g}_j: \Re^{n_j} \mapsto \Re^{m_j}$ \\
$\v{h}_j(\v{x}_j)$ & player $j$'s equality constraints; $\v{h}_j: \Re^{n_j} \mapsto \Re^{k_j}$ \\
$L_j$ & Lagrangian function associated with player $j$ \\
$p_j(\v{x}_j,\vpi)$ & player $j$'s profit function; $p_j: \Re^{n_j} \times \Re^{d} \mapsto \Re$ \\
$p_j^*(\vpi)$ & player $j$'s optimal objective function value (profit); $p_j^*: \Re^{d} \mapsto \Re$ \\
$p_j^U(\vpi)$ & upper (dual) bound on player $j$'s optimal objective function value (profit); $p_j^U: \Re^{d} \mapsto \Re$ \\
$\tilde{p}_j(\v{x}_j,\v{x}_{-j})$ & player $j$'s profit function in a traditional Nash-Cournot setting; $\tilde{p}_j: \Re^{n_j} \times \Re^{n-n_j} \mapsto \Re$ \\
$\tilde{p}_j^*(\v{x}_{-j})$ & player $j$'s optimal objective function value in a traditional Nash-Cournot setting; $\tilde{p}_j^*: \Re^{n-n_j} \mapsto \Re$ \\
$\tilde{p}_j^U(\v{x}_{-j})$ & upper bound on player $j$'s optimal objective function value (Nash-Cournot setting); $\tilde{p}_j^U: \Re^{n-n_j} \mapsto \Re$ \\
$\delta_j(\v{x}_j,\vpi)$ & player $j$'s disequilibrium; $\delta_j(\v{x}_j,\vpi) = p_{j}^*(\vpi) - p_j(\v{x}_j,\vpi)$ \\
\bottomrule
\end{tabular}
{\scriptsize Note on notational overloading: We admit to indexing nodes as $n \in \mc{N}$ and using $n = \sum_j n_j$ as the dimension of all player decisions.}

As stated in Section~\ref{sec:intro}, there are numerous formulations for the pooling problem and its generalization.  Below we use a variant of the standard $\mathbb{P}$-formulation. One could also use a $\mathbb{Q}$-, $\mathbb{PQ}$-, or source-based formulation \cite{lotero201613}. 
We assume that player $j$'s feasible region $\mc{X}_j$ 
is given by the following set of constraints (where the subscript $j$ is omitted for readability):
\begin{subequations} \label{model:core_pooling_problem}
\begin{alignat}{4}
& f_{n}^{\IN} = \sum_{(n,n') \in \mc{A}} f_{nn'} \quad \forall n \in \Nin & & \label{eq:fin_auxiliary} \\
& f_{n}^{\OUT} = \sum_{(n',n) \in \mc{A}} f_{n'n} \quad \forall n \in \Nout & & \label{eq:fout_auxiliary} \\
& \sum_{(n',n) \in \mc{A}} f_{n'n} = \sum_{(n,n') \in \mc{A}} f_{nn'} \quad \forall n \in \Nblend \qquad \textrm{[flow balance]} & & \label{eq:haverly_flow_balance_core} \\
& \sum_{\substack{n' \in \Nin:\\(n',n) \in \mc{A}}} C_{n'k}^{\IN} f_{n'n} + \sum_{\substack{n' \in \Nblend:\\(n',n) \in \mc{A}}} c_{n'k} f_{n'n} = c_{nk} \sum_{(n,n') \in \mc{A}} f_{nn'} \quad \forall n \in \Nblend, k \in \mc{K} \quad \textrm{[concentration balance]} & & \label{eq:concentration_mass_balance_core} \\
& \sum_{\substack{n' \in \Nin:\\(n',n) \in \mc{A}}} C_{n'k}^{\IN} f_{n'n} + \sum_{\substack{n' \in \Nblend:\\(n',n) \in \mc{A}}} c_{n'k} f_{n'n} \leq C_{nk}^{\max} f_{n}^{\OUT} \quad \forall n \in \Nout, k \in \mc{K}  \qquad \textrm{[max concentration]} & & \label{eq:max_concentration_core} \\
& \sum_{\substack{n' \in \Nin:\\(n',n) \in \mc{A}}} C_{n'k}^{\IN} f_{n'n} + \sum_{\substack{n' \in \Nblend:\\(n',n) \in \mc{A}}} c_{n'k} f_{n'n} \geq C_{nk}^{\min} f_{n}^{\OUT} \quad \forall n \in \Nout, k \in \mc{K}  \qquad \textrm{[min concentration]} & & \label{eq:min_concentration_core} \\
& F_n^{\min} u_{n} \leq f_{n}^{\IN} \leq F_n^{\max} u_{n}\quad \forall n \in \Nin & & \label{eq:semi_continuous_flow} \\
& c_{nk} \in [0,1] \quad \forall n \in \Nblend, k \in \mc{K} & & \\
& f_{nn'} \geq 0 \quad \forall (n,n') \in \mc{A} & & \\
& 0 \leq f_{n}^{\OUT} \leq F_n^{\max} \quad \forall n \in \Nout & & \\
& u_{n} \in \{0,1\} \quad \forall n \in \Nin 
\end{alignat}
\end{subequations}
Constraints~\eqref{eq:fin_auxiliary} and \eqref{eq:fout_auxiliary} track flow at each input and output node, respectively.  Note that the decision variables $f_{n}^{\IN}$ and $f_{n}^{\OUT}$ are redundant, but are stated here to improve readability.  
Constraints~\eqref{eq:haverly_flow_balance_core} ensure flow balance at all pools.
Constraints~\eqref{eq:concentration_mass_balance_core} ensure concentration balance of each spec at each pool. 
Constraints~\eqref{eq:max_concentration_core} and \eqref{eq:min_concentration_core} limit the concentration of certain specs at output nodes, e.g., a sulfur concentration limit for a diesel fuel.
Bilinear terms, which lead to nonconvex quadratic constraints, involving the product of two continuous decision variables (flow and concentration) appear in constraints~\eqref{eq:concentration_mass_balance_core}, \eqref{eq:max_concentration_core}, and \eqref{eq:min_concentration_core}.
Constraints~\eqref{eq:semi_continuous_flow} impose a semi-continuous nature on certain input flows by insisting that $f_{n}^{\IN} > 0 \implies f_{n}^{\IN} \in [F_n^{\min},F_n^{\max}]$; otherwise, $f_{n}^{\IN} = 0$.  This semi-continuous constraint is not found in the standard pooling problem, but may arise in a generalized pooling problem.  
The remaining constraints capture variable bounds.
In all, we have a set of linear and nonconvex quadratic constraints involving discrete and continuous variables, which give rise to challenging nonconvex QCQP and/or MIQCQP optimization problems.

\subsection{Perfect competition} \label{sec:pc_objectives}

In economics, a \textit{price taker} refers to a market participant who does not anticipate any effect of its decisions on the prices of goods or services and, therefore, must accept the prevailing market price. 
This impotency stems from the assumption that there are many competitors selling identical products and buyers have access to the price charged by all participants.

Assuming output streams are independent, player $j$'s objective function is:
\begin{equation} \label{objfnc:price_taker_indep}
\sum_{n \in \Nout_j} \pi_{n} f_{jn}^{\OUT} 
- \sum_{n \in \Nin_j} \left( C_{jn}^{\textrm{var}} f_{jn}^{\IN} + C_{jn}^{\textrm{fixed}} u_{jn} \right) 
\end{equation}
where the first summation captures player $j$'s revenue as the market-clearing price $\pi_n$ of the output stream at node $n \in \Nout_j$ times the flow out of that node. The second summation denotes the variable and fixed cost incurred. 
Meanwhile, we assume there is a single consumer (``consumer player'') who participates in two independent markets each governed by a linear inverse demand function defined by an intercept $\alpha_n$ and a negative slope $\beta_n$, whose goal is to maximize consumer surplus by solving
\begin{equation} \label{objfnc:consumer_pc}
\max \left\{ \sum_{n \in \Nout}(\alpha_{n} q_{n} - \tfrac{1}{2} \beta_{n} q_{n}^2 - \pi_{n} q_{n}) : q_{n} \geq 0, \forall n \in \Nout  \right\}.
\end{equation} 

\subsection{Nash-Cournot imperfect competition}

Nash-Cournot competition is a common economic paradigm used to model an industry structure in which suppliers compete via their production quantities. We follow the typical assumptions for this type of game-theoretic model that render it a so-called \textit{static non-cooperative game with complete information} \cite{gibbons1992game}: 
(i) suppliers are economically \textit{rational} and act strategically to maximize profit given their competitors' decisions; 
(ii) suppliers decide quantities \textit{simultaneously} (i.e., this is a static game); 
(iii) suppliers decide independently of one another and do not collude (hence, the term ``non-cooperative");
(iv) suppliers have \textit{complete information} about how their production quantities, and those of the competitors, affect the market clearing price; 
(v) suppliers have market power, i.e., each supplier's production quantity affects the market clearing price;
(vi) the number of suppliers is fixed.

Assuming output streams are independent and governed by linear inverse demand functions, player $j$'s objective function is:
\begin{equation} \label{objfnc:nash_cournot_indep_linear}
\sum_{n \in \Nout_j} \left[ \alpha_n - \beta_n \left( f_{jn}^{\OUT} + \sum_{i \in \mc{J}: i \neq j} f_{in}^{\OUT} \right) \right] f_{jn}^{\OUT} - \sum_{n \in \Nin_j} \left( C_{jn}^{\textrm{var}} f_{jn}^{\IN} + C_{jn}^{\textrm{fixed}} u_{jn} \right) 
\end{equation}

\section{Brief review of mathematical formulations for equilibrium modeling} \label{sec:review}

This section first outlines the concepts from bilevel optimization underpinning our algorithmic approach in the subsequent section. Namely, it describes the connections between bilevel optimization, semi-infinite programming, and minimizing disequilibrium.  
We then describe the maximum social welfare and complementarity models that we use as a basis of comparison. 
The last subsection offers a word of caution regarding complementarity approaches in a nonconvex setting.

\subsection{Bilevel optimization}

Consider a bilevel programming problem:
\begin{alignat}{2}
\label{blp}
\notag
\min_{\vpi,\v{x}}\; & \phi(\vpi,\v{x}) \\
\st~~
\tag{BLP} & (\vpi,\v{x}) \in \Pi \\
\notag & 
	\begin{aligned}
	\v{x} \in \arg 
	\max_{\v{y} \in \mc{X}}\; & p(\v{y},\vpi).
	\end{aligned}
\end{alignat}
%
We call $\vpi$ the upper (sometimes outer or leader) variables, and 
$\v{x}$ the lower (or inner or follower) variables.
The lower level problem is an optimization problem parameterized by $\vpi$:
\begin{equation} \label{model:lower_level_problem}
p^*(\vpi) = \max_{\v{y}} \left\{ p(\v{y},\vpi): \v{y} \in \mc{X} \right\},
\end{equation}
where we use the dummy vector $\v{y}$ to avoid confusion.
A valid reformulation of \eqref{blp} is the following:
\begin{alignat}{2} \label{blp_r}
\notag
\min_{\vpi,\v{x}}\; & \phi(\vpi,\v{x}) \\
\st~~
\tag{BLP-R} & (\vpi,\v{x}) \in \Pi \\
\notag & \v{x} \in \mc{X} \\
\label{con:infinite}	& p(\v{x},\vpi) \ge p(\v{y},\vpi), \forall \v{y} \in \mc{X}.
\end{alignat}
This reformulation has been considered in \cite{bard83,mitsos10,tsoukalasEA09,tuyEA93} and we recognize it as a semi-infinite program.
This class of problems and their numerical solution methods informs the approach from \cite{harwood2021equilibrium} for finding equilibrium.
In this work, we are particularly interested in the special case when the upper-level objective function $\phi(\vpi,\v{x})$ is trivial and everywhere equal to zero, in which case \eqref{blp_r} reduces to a feasibility problem, albeit a challenging one given the presence of ``infinite'' constraints \eqref{con:infinite}.  Harwood et al. \cite{harwood2021equilibrium} offers an attractive alternative to this feasibility problem by replacing the trivial objective with one that seeks to minimize the violation of the infinite constraint \eqref{con:infinite}.  While there are many options for measuring the violation, or in our context the ``amount of disequilibrium,'' we use arguably the simplest choice of minimizing the absolute (L1) distance between $p^*(\vpi)$ and $p(\v{x},\vpi)$, leading to the following model:
\begin{alignat}{2} \label{blp_MinDiseq}
\notag
\min_{\vpi,\v{x}}\; & p^*(\vpi) - p(\v{x},\vpi) \\
\st~~
\tag{BLP-R0} & (\vpi,\v{x}) \in \Pi \\
\notag & \v{x} \in \mc{X} 
\end{alignat}
The validity of this approach should be clear.  
By definition \eqref{model:lower_level_problem}, $p^*(\vpi) \geq p(\v{x},\vpi)$ for all $\v{x} \in \mc{X}$.
It follows that for any $(\vpi,\v{x})$ feasible to \eqref{blp_MinDiseq}, the objective value is greater than or equal to zero.
If, further, $(\vpi,\v{x})$ is optimal for \eqref{blp_MinDiseq} with optimal objective value equal to zero, then $(\vpi,\v{x})$ must also be feasible to \eqref{blp_r} and therefore optimal when the function $\phi$ is trivial and everywhere equal to zero.
We explore algorithms to solve \eqref{blp_MinDiseq} in Section~\ref{sec:algorithms}.

\subsection{Maximum social welfare for perfect competition} \label{sec:perfect_competition}

One is often interested in maximizing social welfare, the sum of producer and consumer surplus (i.e., the sum of the objective functions in \eqref{objfnc:price_taker_indep} and \eqref{objfnc:consumer_pc}), to determine static long-run general equilibrium conditions of a perfectly competitive game.
Under the assumptions of Section~\ref{sec:pc_objectives}, and after requiring the quantity consumed $q_n$ of output stream $n \in \Nout$ to equal the quantity produced $\sum_{j \in \mc{J}} f_{jn}^{\OUT}$, the maximum social welfare problem becomes a bilinearly-constrained optimization problem with a concave quadratic objective function (an MIQCQP):
\begin{alignat}{3} \label{model:max_social_welfare}
\max_{\v{q},\v{c},\v{f},\v{u}}~ & \sum_{n \in \Nout}(\alpha_{n} q_{n} - \tfrac{1}{2} \beta_{n} q_{n}^2) - \sum_{n \in \Nin_j} \left( C_{jn}^{\textrm{var}} f_{jn}^{\IN} + C_{jn}^{\textrm{fixed}} u_{jn} \right) \notag \\
\st~~ 				
	   & (\v{c}_j,\v{f}_j,\v{u}_j) \in \mc{X}_j 	\qquad \forall j \in \mc{J} \\
\notag & q_n = \sum_{j \in \mc{J}} f_{jn}^{\OUT}    \qquad \forall n \in \Nout
\end{alignat}
If desired, one can project \eqref{model:max_social_welfare} on to a lower-dimensional space by substituting $\sum_{j \in \mc{J}} f_{jn}^{\OUT}$ for $q_n$ to obtain
\begin{alignat}{3} \label{model:max_social_welfare_projected}
\max_{\v{c},\v{f},\v{u}}~ & \sum_{n \in \Nout} \left[\alpha_{n} \left( \sum_{j \in \mc{J}} f_{jn}^{\OUT} \right) - \tfrac{1}{2} \beta_{n} \left( \sum_{j \in \mc{J}} f_{jn}^{\OUT} \right)^2 \right] - \sum_{n \in \Nin_j} \left( C_{jn}^{\textrm{var}} f_{jn}^{\IN} + C_{jn}^{\textrm{fixed}} u_{jn} \right) \notag \\
\st~~ 				
	   & (\v{c}_j,\v{f}_j,\v{u}_j) \in \mc{X}_j 	\qquad \forall j \in \mc{J}
\end{alignat}

When all players are assumed to be price-takers, one can solve the maximum social welfare model \eqref{model:max_social_welfare} (or \eqref{model:max_social_welfare_projected}) to try to obtain an equilibrium.  
Specifically, Harwood et al. \cite{harwood2021equilibrium} proved that, when an equilibrium $(\vpi,\v{x})$ exists, the component $\v{x}=(\v{x}_1,\dots,\v{x}_N)$ corresponding to the individual player decisions, i.e., $\v{x}_j = (\v{c}_j,\v{f}_j,\v{u}_j)$ for all $j \in \mc{J}$ (pooling players, not suppliers), can be obtained by solving the maximum welfare problem regardless of convexity. In our pooling setting, when an equilibrium exists, prices can be obtained by setting $\pi_n = \alpha_n - \beta_n \sum_{j \in \mc{J}} f_{jn}^{\OUT}$ for all $n \in \Nout$ since the consumer model~\eqref{objfnc:consumer_pc} is quite simple.
More generally, when the individual player problems are convex, the equilibrium prices $\vpi$ can be obtained from the dual variables of the maximum social welfare problem. However, when the model is nonconvex, one must verify that strong duality holds in order to guarantee that optimal dual variables provide the equilibrium prices. Finding optimal dual variables, and subsequently verifying  whether strong duality holds, can be nontrivial for nonconvex problems in general. Because of this, when the model is nonconvex, existence of equilibrium cannot be proved by simply solving maximum welfare and using the resulting Lagrange multipliers to solve for each individual player's decisions in \eqref{model:intro_agent_problem_Nash}. 
When no equilibrium exists, the maximum social welfare model \eqref{model:max_social_welfare} alone is neither guaranteed to detect this non-existence, nor quantify the degree of disequilibrium.

For sake of completeness, Appendix~\ref{app:msw_via_comp} provides a standalone derivation of the maximum social welfare model \eqref{model:max_social_welfare} using a complementarity framework, the approach used in the next subsection.
This derivation allows one to interpret the maximum social welfare model \eqref{model:max_social_welfare} through the lens of complementarity, which may offer interesting connections in its own right.

\subsection{Complementarity for Nash-Cournot competition} \label{sec:complementarity_heuristic}

Given the popularity of complementarity-based methods for convex optimization problems, we also consider using a complementarity-based approach as a heuristic for Nash-Cournot competition. 
Representing all decision variables of player $j$'s pooling problem as $\v{x}_j$, player $j$'s feasible region can be expressed compactly as 
$\mc{X}_j = \mc{X}_j^C \cap \{ \v{x}_j \in \Re^{n_j}: x_{j\ell} \in \Z ~\forall \ell \in \mc{L}_j \}$,
where 
$\mc{X}_j^C = \left\{ \v{x}_j \in \Re^{n_j}: \v{g}_j(\v{x}_j) \leq \v{0}, \v{h}_j(\v{x}_j) = \v{0}, \v{x}_j \geq \v{0} \right\}$
denotes the continuous relaxation of $\mc{X}_j$ and $\mc{L}_j$ denotes player $j$'s set of discrete decisions.
Moreover, the KKT conditions of this continuous relaxation are given by the complementarity system~\eqref{model:generic_nlp_kkt}.

In the Nash-Cournot setting, given a set of linear inverse demand functions, player $j$'s objective function~\eqref{objfnc:nash_cournot_indep_linear} can be represented more generally and concisely as 
\begin{equation} \label{objfnc:nash_cournot_indep_linear_x}
\tilde{p}_j(\v{x}_j,\v{x}_{-j}) = 
\sum_{r \in \mc{R}} \left( \alpha_r - \beta_r \sum_{i \in \mc{J}} x_{ir} \right) x_{jr} - \v{C}_j^{\top} \v{x}_j  
\end{equation}
where the set $\mc{R}$ indexes the output streams where revenue is earned, i.e., $\mc{R} = \Nout$, and $\v{C}_j$ denotes a vector of cost parameters.
This leads to a compact representation of player $j$'s \textit{relaxed} optimization problem, still parameterized by $\v{x}_{-j}$:
\begin{equation} \label{model:relaxed_individual_player_NC}
\max_{\v{x}_j \in \mc{X}^C_j}~ \sum_{r \in \mc{R}} \left( \alpha_r - \beta_r \sum_{i \in \mc{J}} x_{ir} \right) x_{jr} - \v{C}_j^{\top} \v{x}_j 
\end{equation}
Next consider the continuous nonconvex QCQP optimization problem
\begin{subequations} \label{model:relaxed_monolithic_NC}
\begin{alignat}{4}
\max_{\v{x}=(\v{x}_1,\dots,\v{x}_N)}~~ & \tilde{p}^{\textrm{MonoNC}}(\v{x}) = \sum_{j \in \mc{J}} \sum_{r \in \mc{R}} \Bigg[ \alpha_r - \beta_r \left( x_{jr} + \tfrac{1}{2} \sum_{i \in \mc{J}: i \neq j} x_{ir} \right) \Bigg] x_{jr} - \sum_{j \in \mc{J}} \v{C}_j^{\top} \v{x}_j & & \\
\st~~    & \v{x}_j \in \mc{X}^C_j \quad \forall j \in \mc{J} & & 
\end{alignat}
\end{subequations}
Here ``MonoNC'' stands for ``Monolithic Nash-Cournot.''  
Note that the objective function in \eqref{model:relaxed_monolithic_NC} is not the same as simply summing \eqref{model:relaxed_individual_player_NC} over all players. 
For the special case of linear inverse demand functions, Model~\eqref{model:relaxed_monolithic_NC} is important due to the following proposition.
\begin{proposition}
The KKT conditions of Model~\eqref{model:relaxed_monolithic_NC} are identical to the KKT system obtained from aggregating the KKT conditions of Model~\eqref{model:relaxed_individual_player_NC} for all players $j \in \mc{J}$. 
\end{proposition}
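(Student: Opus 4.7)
My plan is to prove the proposition by showing that the partial gradients of the two objectives agree on the relevant variables, and then observing that the constraint structure is player-separable so the multipliers and complementarity conditions carry over unchanged.

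First I would write the individual objective \eqref{objfnc:nash_cournot_indep_linear_x} in expanded form,
$$
\tilde{p}_j(\v{x}_j,\v{x}_{-j}) \;=\; \sum_{r \in \mc{R}} \alpha_r x_{jr} \;-\; \sum_{r \in \mc{R}} \beta_r x_{jr}^2 \;-\; \sum_{r \in \mc{R}} \beta_r \sum_{i \neq j} x_{ir} x_{jr} \;-\; \v{C}_j^{\top}\v{x}_j,
$$
and differentiate with respect to $x_{jr}$ to obtain the marginal
$\alpha_r - 2\beta_r x_{jr} - \beta_r \sum_{i \neq j} x_{ir} - (C_j)_r$. For components of $\v{x}_j$ outside $\mc{R}$, the only contribution is $-(C_j)_\ell$. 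Writing the KKT conditions of \eqref{model:relaxed_individual_player_NC} and aggregating over $j \in \mc{J}$ then gives one stationarity equation per player $j$ of the form \eqref{eq:partial_Lagrangian}, together with the complementarity blocks \eqref{eq:gj_perp_lambdaj}--\eqref{eq:xj_perp_nuj} built from $\v{g}_j, \v{h}_j$ and the nonnegativity of $\v{x}_j$.

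Next I would expand the monolithic objective of \eqref{model:relaxed_monolithic_NC} as
$$
\tilde{p}^{\textrm{MonoNC}}(\v{x}) \;=\; \sum_{j \in \mc{J}} \sum_{r \in \mc{R}} \alpha_r x_{jr} \;-\; \sum_{j \in \mc{J}} \sum_{r \in \mc{R}} \beta_r x_{jr}^2 \;-\; \tfrac{1}{2}\sum_{j \in \mc{J}} \sum_{r \in \mc{R}} \beta_r \sum_{i \neq j} x_{ir} x_{jr} \;-\; \sum_{j \in \mc{J}} \v{C}_j^{\top}\v{x}_j,
$$
and compute $\partial \tilde{p}^{\textrm{MonoNC}}/\partial x_{jr}$ for fixed $j,r$. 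The crux is the interaction term: $x_{jr}$ appears there both through the outer index (when $j' = j$) and through the inner index (when $i = j$ in some $j' \neq j$ summand). The first contributes $-\tfrac{1}{2}\beta_r \sum_{i \neq j} x_{ir}$ and the second contributes $-\tfrac{1}{2}\beta_r \sum_{j' \neq j} x_{j'r}$; these coincide and sum to $-\beta_r \sum_{i \neq j} x_{ir}$. Adding the linear and quadratic pieces yields $\alpha_r - 2\beta_r x_{jr} - \beta_r \sum_{i \neq j} x_{ir} - (C_j)_r$, identical to the individual marginal above.

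Finally, because the feasible set of \eqref{model:relaxed_monolithic_NC} is the Cartesian product $\prod_{j} \mc{X}^C_j$, the constraint gradients, multipliers $(\vlambda_j,\vmu_j,\vnu_j)$, and complementarity conditions decouple by $j$ in the monolithic KKT system and are exactly those appearing in \eqref{model:generic_nlp_kkt} for the individual problem~\eqref{model:relaxed_individual_player_NC}. Combined with the gradient calculation, this gives stationarity equations and complementarity blocks for the monolithic problem that match, block-by-block and player-by-player, the aggregation of the individual KKT systems. The main subtlety I expect is purely bookkeeping: making the factor-of-$\tfrac{1}{2}$ argument precise, since it is exactly what compensates for the fact that each cross term $x_{ir}x_{jr}$ is counted once in the player-$j$ problem but twice (as $(i,j)$ and $(j,i)$) in the monolithic sum, so that differentiation restores the Cournot marginal rather than the social-planner marginal.
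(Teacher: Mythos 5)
Your proposal is correct and follows essentially the same route as the paper's proof: both arguments reduce to checking that $\partial \tilde{p}_j/\partial x_{jr}$ and $\partial \tilde{p}^{\textrm{MonoNC}}/\partial x_{jr}$ coincide (the identity in \eqref{eq:partial_derivative_of_p_linear_case}) and observing that the constraints, multipliers, and complementarity blocks decouple player-by-player since the monolithic feasible set is $\prod_j \mc{X}^C_j$. Your explicit bookkeeping of the factor of $\tfrac{1}{2}$ in the cross terms is a welcome elaboration of a step the paper states without derivation, but it is not a different method.
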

\proof 
The KKT conditions of Model~\eqref{model:relaxed_individual_player_NC} are identical to those given by the complementarity system~\eqref{model:generic_nlp_kkt} with $\v{x}_{-j}$ replacing $\vpi$. 
With the exception of the term $\nabla_{\v{x}_j} \tilde{p}_j^{\textrm{MonoNC}}(\v{x})$ replacing the term $\nabla_{\v{x}_j} \tilde{p}_j(\v{x}_j,\v{x}_{-j})$ in \eqref{eq:partial_Lagrangian}, the KKT conditions of Model~\eqref{model:relaxed_monolithic_NC} are the same as those obtained from aggregating \eqref{model:generic_nlp_kkt} for all players.
Finally, since
\begin{equation} \label{eq:partial_derivative_of_p_linear_case}
\frac{\partial \tilde{p}_j(\v{x}_j,\v{x}_{-j})}{\partial x_{jr}} = 
\frac{\partial \tilde{p}_j^{\textrm{MonoNC}}(\v{x})}{\partial x_{jr}} = \alpha_r - \beta_r \left( 2x_{jr} + \sum_{i \in \mc{J}: i \neq j} x_{ir} \right) - C_{jr}
\end{equation}
for $\tilde{p}_j$ as defined in \eqref{objfnc:nash_cournot_indep_linear_x},
the proposition holds.
\qed

When the inverse demand functions are nonlinear, it is difficult, if not impossible, to formulate a single  monolithic model as in \eqref{model:relaxed_monolithic_NC}.  Instead, one must actually write the KKT conditions for all players and then attempt to solve the resulting KKT system as in \eqref{model:generic_nlp_kkt}. In this special case, however, we can write a single optimization to attempt to heuristically find equilibria.  This approach is a heuristic because in a nonconvex setting, the KKT conditions are necessary, but not sufficient. 

When integer decisions are present, we attempt to use monolithic Model~\eqref{model:relaxed_monolithic_NC} by simply re-imposing integrality, i.e., by enforcing $x_{j\ell} \in \Z ~\forall \ell \in \mc{L}_j$. Gabriel et al. \cite{Gabriel2013} employ a different, but closely related heuristic technique when handling discrete decisions by attempting to balance the tradeoff between complementarity and integrality.

\subsection{The fallacy of relying on KKT conditions for nonconvex lower-level problems} \label{sec:fallacy}

It is common in bilevel optimization to replace a lower-level convex problem by its first-order KKT conditions, which are necessary and sufficient. 
Here we provide two simple, concrete examples demonstrating that this approach is unreliable for \textit{nonconvex} lower-level problems.  


\cmt{
\subsubsection{Example with two continuous nonconvex players where complementarity with first-order KKT conditions fails}
\label{sec:kkt_fails}
}
\cmt{
As a variant of the previous example, consider a continuous nonconvex two-player Nash-Cournot game where player $j \in \{1,2\}$ solves  
\begin{equation}
\tilde{p}_j(x_{-j}) = \max \{ -x_j^2 x_{-j} : x_j \in \Re \} = \min \{ x_j^2 x_{-j} : x_j \in \Re \}.
\end{equation}
This game has multiple equilibria given by the set $\{\v{x} \in \Re^2 : x_1 x_2 = 0, x_1 \geq 0, x_2 \geq 0 \}$.
Note that because each player's decision variable is unconstrained, player $j$'s problem is potentially a nonconvex quadratic optimization problem.
Player $j$'s first-order KKT condition is
\begin{equation}
2 x_j x_{-j} = 0,
\end{equation}
revealing that if only first-order KKT conditions are used, as is typically done in complementarity modeling, any solution $(x_1,x_2) \in \{\v{x} \in \Re^2 : x_1 x_2 = 0\}$ is a candidate for an equilibrium. In other words, a complementarity approach using only first-order KKT conditions would yield a superset of candidate equilibria. Note that the set of equilibria could be obtained if one were to include player $j$'s second-order necessary KKT condition (i.e., positive semidefiniteness): $2 x_{-j} \geq 0$. It is not clear if a monolithic formulation exists for this problem.
}

\subsubsection{Example with binary knapsack players}
Consider a multi-player discretely-constrained Nash-Cournot game in which player $j \in \mc{J}$ solves
\cmt{
\begin{equation} \label{model:knapsack_NCgame}
\tilde{p}_j^*(\v{x}_{-j}) = 
\max_{\v{x}_j} \left\{ \tilde{p}_j(\v{x}_{j},\v{x}_{-j}) = \sum_{l \in \mc{L}_j} \left[ \left( \breve{\alpha}_l - \breve{\beta}_l \sum_{i \in \mc{J}} x_{il} \right) x_{jl} - \breve{C}_{jl} x_{jl} \right] : \sum_{l \in \mc{L}_j} a_{jl} x_{jl} \leq b_j, \v{x}_j \in \{0,1\}^{n_j}  \right\}  
\end{equation}
where $\breve{\alpha}_l$ and $\breve{\beta}_l$ represent the parameters of a linear inverse demand function of market $l$, $\mc{L}_j$ denotes the set of markets available to player $j$, $\breve{C}_{jl}$ and $a_{jl}$ denote the cost incurred and resources consumed when player $j$ chooses to participate in market $l$, i.e., and $b_j$ is the total resource amount available to player $j$. The breve notation (e.g., $\breve{\alpha}_l$ ) is meant to avoid confusion with previously defined symbols.
}

This knapsack game is interesting for several reasons.  
\cmt{
First, it has no continuous variables, only binaries, which are the only nonconvex structures in this setting. Second, Dragotto and Scatamacchia \cite{dragotto2021zero} prove that deciding if an instance of this knapsack game has a pure Nash equilibrium -- even with only 2 players -- is $\Sigma_2^p$-complete in the polynomial hierarchy\footnote{Johannes \cite{johannes2011new} provides a nice introduction to the complexity class $\Sigma_2^p$: ``The polynomial hierarchy provides a proper classification scheme for decision problems that appear to be harder than NP-complete. With P and NP at the bottom of the polynomial hierarchy, the next most interesting class is arguably $\Sigma_p^2$.... The complexity class $\Sigma_p^2$ lies one level above the class NP and contains all decision problems that can be solved efficiently by a nondeterministic algorithm that has access to an NP oracle.''}.  This complexity result is a specialization of a more general theorem for integer programming games \cite{carvalho2020computing}. Note that our generalized pooling problem under imperfect competition is an example of a nonconvex nonlinear integer programming game.
}

Since KKT conditions rely on continuous decision variables and differentiable functions, we apply a standard complementarity ``trick'' used commonly in the nonlinear optimization community to handle binary decisions without reverting to branch-and-bound methods.
We treat each binary variable $x_{jl}$ as a continuous variable $x_{jl} \in [0,1]$ subject to the additional complementarity constraint $x_{jl}(1-x_{jl}) = 0$.  In shorthand, we could write $0 \leq x_{jl} \perp (1-x_{jl}) \geq 0$. Applying this substitution gives rise to the nonconvex continuous formulation 
\cmt{
\begin{subequations} \label{model:knapsack_nlp_NCgame}
\begin{alignat}{4}
\max_{\v{x}_j}~~& \sum_{l \in \mc{L}_j} \left[ \left( \breve{\alpha}_l - \breve{\beta}_l \sum_{i \in \mc{J}} x_{il} \right) x_{jl} - \breve{C}_{jl} x_{jl} \right] & & \qquad { \textrm{\underline{Dual vars}}} \\ 
\st~~& \sum_{l \in \mc{L}_j} a_{jl} x_{jl} \leq b_j & & \qquad { \pi_j \geq 0} \\
	& x_{jl}(1-x_{jl}) = 0 & & \qquad { \gamma_{jl} \in \Re} \qquad \forall l \in \mc{L}_j \\	
	& -x_{jl} \geq -1 & & \qquad { \mu_{jl} \geq 0} \qquad \forall l \in \mc{L}_j \\
	 & x_{jl} \geq 0 & & \qquad { \nu_{jl} \geq 0} \qquad \forall l \in \mc{L}_j 
\end{alignat}
\end{subequations}
}
The corresponding Lagrangian function for each player $j$ is
\cmt{
\begin{eqnarray*}
L_j(\v{x}_j,\pi_j,\vgamma_j,\vmu_j,\vnu_j) & = &
\sum_{l \in \mc{L}_j} \left[ \left( \breve{\alpha}_l - \breve{\beta}_l \sum_{i \in \mc{J}} x_{il} \right) x_{jl} - \breve{C}_{jl} x_{jl} \right] 
+ \pi_j \left( b_j - \sum_{l \in \mc{L}_j} a_{jl} x_{jl} \right)
\\ 
& & + \sum_{l \in \mc{L}_j} \gamma_j x_{jl}(1-x_{jl})
+ \sum_{l \in \mc{L}_j} \mu_{jl} (1-x_{jl})
+ \sum_{l \in \mc{L}_j} \nu_{jl} x_{jl}~.
\end{eqnarray*}
}
The necessary, but not sufficient, first-order KKT optimality conditions associated with the nonlinear model~\eqref{model:knapsack_nlp_NCgame} when all players are represented by a monolithic system of equations are
\cmt{
\begin{subequations} \label{model:knapsack_nlp_kkt_NCgame}
\begin{alignat}{4}
\frac{dL_j}{dx_{jl}} = \breve{\alpha}_l - \breve{C}_{jl} - \breve{\beta}_l \sum_{i \neq j} x_{il} - 2 \breve{\beta}_l x_{jl} 
- \pi_j a_j + \gamma_{jl} - 2\gamma_{jl} x_{jl} - \mu_j + \nu_j = 0 & & \qquad \forall j \in \mc{J},l \in \mc{L}_j \\ 
0 \leq b_j - \sum_{l \in \mc{L}_j} a_{jl} x_{jl} \perp \pi_j \geq 0 & & \forall j \in \mc{J} \\
\sum_{l \in \mc{L}_j} \gamma_{jl} x_{jl}(1-x_{jl}) = 0 & & \qquad \forall j \in \mc{J},l \in \mc{L}_j \\	
0 \leq 1-x_{jl} \perp \mu_{jl} \geq 0 & & \qquad \forall j \in \mc{J},l \in \mc{L}_j \\
0 \leq x_{jl} \perp \nu_{jl} \geq 0 & & \qquad \forall j \in \mc{J},l \in \mc{L}_j
\end{alignat}
\end{subequations}
}

The following proposition shows that if one were to choose \textit{any} feasible solution $\hat{\v{x}}_j$ to player $j$'s binary knapsack problem~\eqref{model:knapsack_NCgame} and concatenate all such solutions into a single decision vector $\hat{\v{x}}=(\hat{\v{x}}_1,\dots,\hat{\v{x}}_N) \in \{0,1\}^{\sum_j n_j}$, then $\hat{\v{x}}$ trivially satisfies the KKT conditions~\eqref{model:knapsack_nlp_kkt_NCgame}. More importantly, $\hat{\v{x}}$ need not satisfy the requirement $\tilde{p}_j^*(\hat{\v{x}}_{-j}) \geq \tilde{p}_j(\v{x}_j,\hat{\v{x}}_{-j})$ for all $j \in \mc{J}$ and $\v{x}_j \in \{0,1\}^{n_j}$ satisfying \cmt{$\sum_{l \in \mc{L}_j} a_{jl} \hat{x}_{jl} \leq b_j$}, which, of course, is the definition of a Nash equilibrium.  In short, solving \eqref{model:knapsack_nlp_kkt_NCgame} directly gives no guarantee that an equilibrium has been found. 
\begin{proposition}
Let $\hat{\v{x}}_j$ be any feasible solution to player $j$'s binary knapsack problem~\eqref{model:knapsack_NCgame}.  Let $\hat{\v{x}}=(\hat{\v{x}}_1,\dots,\hat{\v{x}}_N)$. Then there exists $(\hat{\vpi},\hat{\vgamma},\hat{\vmu},\hat{\vnu})$ such that $(\hat{\v{x}},\hat{\vpi},\hat{\vgamma},\hat{\vmu},\hat{\vnu})$ is a feasible solution to the KKT conditions \eqref{model:knapsack_nlp_kkt_NCgame}. 
\end{proposition}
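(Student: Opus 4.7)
The plan is to construct explicit dual multipliers $(\hat{\vpi}, \hat{\vgamma}, \hat{\vmu}, \hat{\vnu})$ for any given binary-feasible $\hat{\v{x}}$ and verify each KKT condition in turn. The central observation that makes this possible, and in fact unavoidable, is that the multiplier $\hat{\gamma}_{jl}$ associated with the equality complementarity constraint $x_{jl}(1-x_{jl})=0$ is sign-unrestricted, giving a free parameter that can absorb whatever the stationarity condition demands.

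First I would dispatch the ``easy'' conditions. Since $\hat{x}_{jl}\in\{0,1\}$, the term $\hat{x}_{jl}(1-\hat{x}_{jl}) = 0$, so the complementarity $\sum_{l}\gamma_{jl}\hat{x}_{jl}(1-\hat{x}_{jl})=0$ holds for \emph{any} choice of $\hat{\gamma}_{jl}$. Feasibility of $\hat{\v{x}}_j$ means $b_j - \sum_l a_{jl}\hat{x}_{jl}\ge 0$, so setting $\hat{\pi}_j = 0$ satisfies the knapsack complementarity. For the bound complementarities, I split on the value of $\hat{x}_{jl}$: if $\hat{x}_{jl}=0$ set $\hat{\mu}_{jl}=0$ (and leave $\hat{\nu}_{jl}\ge 0$ free), and if $\hat{x}_{jl}=1$ set $\hat{\nu}_{jl}=0$ (and leave $\hat{\mu}_{jl}\ge 0$ free).

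Next I would use the stationarity condition to pin down the remaining multipliers. Define the residual
\begin{equation*}
\Delta_{jl} := \breve{\alpha}_l - \breve{C}_{jl} - \breve{\beta}_l \sum_{i\neq j} \hat{x}_{il} - 2\breve{\beta}_l \hat{x}_{jl},
\end{equation*}
so that stationarity (with $\hat\pi_j=0$) reads $\Delta_{jl} + \hat{\gamma}_{jl}(1-2\hat{x}_{jl}) - \hat{\mu}_{jl} + \hat{\nu}_{jl} = 0$. In the case $\hat{x}_{jl}=0$ this becomes $\hat{\nu}_{jl} = -\Delta_{jl} - \hat{\gamma}_{jl}$, and in the case $\hat{x}_{jl}=1$ it becomes $\hat{\mu}_{jl} = \Delta_{jl} - \hat{\gamma}_{jl}$. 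In either case, picking $\hat{\gamma}_{jl}$ sufficiently negative (say, $\hat{\gamma}_{jl} \le -|\Delta_{jl}|$) guarantees the remaining slack multiplier is nonnegative. This defines all dual variables explicitly and shows every line of \eqref{model:knapsack_nlp_kkt_NCgame} is satisfied.

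The ``hard part'' is conceptual rather than technical: one must recognize that the very device used to lift the binary constraint into a complementarity constraint introduces a free multiplier $\gamma_{jl}$ with a coefficient $(1-2x_{jl})$ that is nonzero at every binary point, thereby rendering the stationarity condition vacuous as a discriminator between feasible and optimal binary vectors. Once this is seen, the construction above is routine and the proof is essentially a one-paragraph verification. I would close by emphasizing the corollary: since the construction works for \emph{every} binary-feasible $\hat{\v{x}}$, satisfaction of the KKT system \eqref{model:knapsack_nlp_kkt_NCgame} carries no information whatsoever about whether $\hat{\v{x}}$ is a Nash equilibrium.
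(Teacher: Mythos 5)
Your proposal is correct and follows essentially the same construction as the paper: set $\hat{\pi}_j=0$, zero out the appropriate bound multiplier depending on whether $\hat{x}_{jl}$ is $0$ or $1$, and use the sign-unrestricted multiplier $\hat{\gamma}_{jl}$ to absorb the stationarity residual. The only cosmetic difference is that the paper treats $\hat{\nu}_{jl}$ (resp.\ $\hat{\mu}_{jl}$) as the free nonnegative parameter and solves for $\hat{\gamma}_{jl}$, whereas you fix $\hat{\gamma}_{jl}$ sufficiently negative and solve for the slack multiplier---an equivalent parametrization of the same solution family.
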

\begin{proof}
Consider any feasible binary solution $\hat{\v{x}}=(\hat{\v{x}}_1,\dots,\hat{\v{x}}_N) \in \{0,1\}^{\sum_j n_j}$ satisfying \cmt{$\sum_{l \in \mc{L}_j} a_{jl} \hat{x}_{jl} \leq b_j$} for all $j \in \mc{J}$.
Let $\hat{\pi}_j = 0$ for all $j \in \mc{J}$.
If $\hat{x}_{jl} = 0$, let $\hat{\mu}_{jl} = 0$, and let $(\hat{\nu}_{jl},\hat{\gamma}_{jl})$ satisfy $\hat{\nu}_{jl} \geq 0$ and \cmt{$\hat{\gamma}_{jl} = -(\breve{\alpha}_l - \breve{C}_{jl} - \breve{\beta}_l \sum_{i \neq j} \hat{x}_{il}) - \hat{\nu}_{jl}$}.  
If $\hat{x}_{jl} = 1$, let $\hat{\nu}_{jl} = 0$, and let $(\hat{\mu}_{jl},\hat{\gamma}_{jl})$ satisfy $\hat{\mu}_{jl} \geq 0$ and \cmt{$\hat{\gamma}_{jl} = \breve{\alpha}_l - \breve{C}_{jl} - \breve{\beta}_l \sum_{i \neq j} \hat{x}_{il} - 2\breve{\beta}_l - \hat{\mu}_{jl}$}.
Then $(\hat{\v{x}},\hat{\vpi},\hat{\vgamma},\hat{\vmu},\hat{\vnu})$ is a KKT solution to \eqref{model:knapsack_nlp_kkt_NCgame}.
\qed
\end{proof}

Gabriel et al. \cite{Gabriel2013} provide an alternative heuristic, i.e., an algorithm with no guarantee of finding an equilibrium, for such problems. 
In contrast, in the following section we describe a method that provably finds an equilibrium if one exists; otherwise it provides a certificate that no equilibrium exists.

\section{Algorithms} \label{sec:algorithms}

\subsection{Decomposition algorithm to minimize disequilibrium}

In this section, we describe and adapt a provably optimal outer approximation method, introduced in \cite{harwood2021equilibrium}, for solving pooling problems under perfect and imperfect competition.
In each iteration, the decomposition algorithm attempts to minimize the disequilibrium between individual player's optima and the ``system" optimum. The algorithm terminates in one of two conditions: (1) No disequilibrium exists, which implies that an equilibrium has been found. (2) Disequilibrium is strictly positive, meaning that no equilibrium exists.
Most importantly, this algorithm is valid even if the individual player problems are nonconvex. 

Consider a set $\Players$ of independent optimization problems, each parameterized by a given vector $\vpi$.
Player $j$'s problem is:
\begin{equation} \label{model:agent_problem_Nash}
p_{j}^*(\vpi) = \max \left\{p_j(\v{x}_j,\vpi): \v{x}_j \in \mc{X}_j \right\}.
\end{equation}
In the spirit of a typical decomposition algorithm, we refer to problem~\eqref{model:agent_problem_Nash} as a \textit{subproblem} or as the $j$th \textit{subproblem}. 
Additionally, consider the following global constraints linking $\vpi$ and individual player decisions $\v{x}_j$ for all $j \in \mc{J}$:
\begin{equation} \label{eq:linking_constraints_Nash}
(\vpi,\x{}) \in \Pi
\end{equation}
To find an equilibrium, consider the following \textit{minimum disequilibrium model} reminiscent of \eqref{blp_MinDiseq}:
\begin{alignat}{3} \label{model:minimum_disequilibrium}
\eta = 
\min_{\vpi,\x{}} 	& \sum_{j \in \mc{J}} \big( p_j^*(\vpi) - p_j(\v{x}_j,\vpi) \big) \notag \\
\st~~ 				
       & (\vpi,\x{}) \in \Pi \\
\notag & \v{x}_j \in \mc{X}_j  \quad \forall j \in \mc{J}
\end{alignat}
Here, the scalar $\eta$ represents the amount of disequilibrium, or the difference between the payoff (profit) in the socially optimal outcome and the individually optimal decision, summed over all players.
Introducing an auxiliary decision variable $w_j$ to denote player $j$'s optimal payoff $p_j^*(\vpi)$ yields an equivalent formulation
\begin{alignat}{3} \label{model:minimum_disequilibrium_lifted}
\eta = 
\min_{\substack{\vpi,\x{},\\\w{}}}~~& \sum_{j \in \mc{J}} \big( w_j - p_j(\v{x}_j,\vpi) \big) \notag \\
\st \qquad 				
	   & (\vpi,\x{}) \in \Pi \\
\notag & \v{x}_j \in \mc{X}_j   \quad \forall j \in \mc{J}  \\
\notag & w_j \geq p_j^*(\vpi) 	\quad \forall j \in \mc{J}.
\end{alignat}
Notice that for any feasible $\vpi$, $w_j \geq p_j^*(\vpi) = p_j(\v{x}_j^*(\vpi),\vpi) \geq p_j(\v{x}_j,\vpi)~\forall \v{x}_j \in \mc{X}_j$, where $\v{x}_j^*(\vpi)$ is an optimal solution for player $j$ given $\vpi$.

Given non-empty finite sets $\mc{X}_j^L \subset \mc{X}_j$ for all $j \in \mc{J}$,
consider the following \textit{relaxed} minimum disequilibrium problem:

\cmt{
\begin{subequations} \label{model:minimum_disequilibrium_relaxed}
\begin{alignat}{4}
\eta^L = 
\min_{\substack{\vpi,\x{},\\\w{}}}~~& \sum_{j \in \mc{J}} \big( w_j - p_j(\v{x}_j,\vpi) \big) \\
\st \qquad
& (\vpi,\x{}) \in \Pi 	 \\
& \v{x}_j \in \mc{X}_j 	 && \quad \forall j \in \mc{J} \\
& w_j \geq p_j(\bar{\v{x}}_j,\vpi) && \quad \forall \bar{\v{x}}_j \in \mc{X}_j^L, j \in \mc{J}. \label{eq:equilibrium_cut}
\end{alignat}
\end{subequations}
}We refer to problem~\eqref{model:minimum_disequilibrium_relaxed} as the \textit{relaxed master problem} (RMP) or \textit{lower bounding problem}.
\cmt{One can think of constraints \eqref{eq:equilibrium_cut} as \textit{equilibrium cuts} as they attempt to ensure that each player's approximate or predicted optimal payoff $w_j$ is at least $p_j^*(\vpi)$, its true optimal payoff given $\vpi$. Without constraints \eqref{eq:equilibrium_cut}, it is possible that the RMP~\eqref{model:minimum_disequilibrium_relaxed} yields an optimum for which $w_j < p_j^*(\vpi)$ indicating that player $j$'s predicted optimal payoff $w_j$ under-approximates $p_j^*(\vpi)$ and hence equilibrium has not been achieved.}

Algorithm~\ref{algo:cut_generating_minimum_disequilibrium} describes an iterative algorithm in which the relaxed master problem~\eqref{model:minimum_disequilibrium_relaxed} is solved in each iteration to obtain a lower bound $\eta^L$ on the minimum disequilibrium (see step~\ref{step:solve_relaxed_master}), after which the individual subproblems are solved (see step~\ref{step:solve_subproblems}) to produce a potentially superior upper bound $\eta^U$ on the minimum disequilibrium. The algorithm terminates once the upper and lower bounds are below a pre-defined optimality tolerance $\epsilon$. 

\begin{algorithm}
\caption{Cutting plane algorithm to minimize disequilibrium}
\label{algo:cut_generating_minimum_disequilibrium}
\begin{algorithmic}[1]
\REQUIRE Optimality tolerance $\epsilon > 0$; non-empty finite sets $\mc{X}_j^L$ of feasible solutions for all $j \in \mc{J}$
\STATE Set $\eta^L = -\infty$, $\eta^U = +\infty$, $\vpi^* = \v{0}$, $\v{x}^* = \v{0}$;
\WHILE{$(\eta^U - \max\{\eta^L,0\} > \epsilon)$}
	\STATE Solve the relaxed master problem~\eqref{model:minimum_disequilibrium_relaxed} to obtain ($\hat{\vpi},\hatx{},\hatw{})$ and lower bound $\eta^L$ \label{step:solve_relaxed_master}
	\FOR{each player $j \in \mc{J}$}
		\STATE Solve model~\eqref{model:agent_problem_Nash} as a function of $\hat{\vpi}$ for optimal solution $\bar{\v{x}}_j$ and objective value $p_j^*(\hat{\vpi})$ \label{step:solve_subproblems}
	\ENDFOR
	\IF{$\hat{w}_j \geq p_j^*(\hat{\vpi})~\forall j \in \mc{J}$} \label{step:master_optimality_check}
		\STATE $\vpi^* = \hat{\vpi}$, $\v{x}^* = \hat{\v{x}}$
		\RETURN $(\vpi^*,\v{x}^*_1,\dots,\v{x}^*_{|\mc{J}|})$
	\ELSE
		\STATE $\mc{X}_j^L = \mc{X}_j^L \cup \{\bar{\v{x}}_j\}$ 
	\ENDIF
	\STATE \textbf{if} $\sum_{j \in \mc{J}} ( p_j^*(\hat{\vpi}) - p_j(\hat{\v{x}}_j,\hat{\vpi}) ) < \eta^U$, \textbf{then} $\eta^U = \sum_{j \in \mc{J}} ( p_j^*(\hat{\vpi}) - p_j(\hat{\v{x}}_j,\hat{\vpi}) ); \vpi^* = \hat{\vpi}$, $\v{x}^* = \hat{\v{x}}$; 
\ENDWHILE
\RETURN $(\vpi^*,\v{x}^*_1,\dots,\v{x}^*_{|\mc{J}|})$
\end{algorithmic}
\end{algorithm}
Algorithm~\ref{algo:cut_generating_minimum_disequilibrium} can terminate in one of two ways.
Either:
\begin{enumerate}\itemsep0pt \parskip0pt
\item
An optimal solution to the relaxed master problem~\eqref{model:minimum_disequilibrium_relaxed} at a certain iteration is feasible in Problem~\eqref{model:minimum_disequilibrium_lifted}, indicated by $\hat{w}_j \geq p_j^*(\hat{\vpi})$ for each $j \in \mc{J}$
(note that this relies on the global minimizer of \eqref{model:minimum_disequilibrium_relaxed} being found, otherwise some $w_j$ might be ``too large'').
Then the solution to the relaxed master problem~\eqref{model:minimum_disequilibrium_relaxed} is optimal for \eqref{model:minimum_disequilibrium_lifted}, since \eqref{model:minimum_disequilibrium_relaxed} is a relaxation.
Thus $\eta^L = \eta$ and $(\vpi^*,\v{x}^*)$ is a solution.
\item
An $\epsilon$-optimal solution $(\vpi^*,\v{x}^*)$ of Problem~\eqref{model:minimum_disequilibrium} is found.
Note that
$\sum_{j \in \mc{J}} (p_j^*(\vpi) - p_j(\v{x}_j,\vpi))$
is an upper bound for $\eta$;
we have merely evaluated the objective at a feasible point of Problem~\eqref{model:minimum_disequilibrium}.
The upper bound $\eta^U$ tracks the best of these upper bounds.
\end{enumerate}

If one only cares about the existence of an equilibrium solution, the algorithm can terminate early if it happens that $\eta^L > 0$.
Since $\eta^L$ is a lower bound on $\eta$ by construction, an equilibrium solution does not exist if $\eta^L > 0$ \cite{harwood2021equilibrium}.  
It is also worth noting, particularly for large problems, one can pre-populate the set $\mc{X}^L$ with candidate solutions with the hope of reducing the number of iterations, i.e., by allowing the relaxed master problem to find better solutions more quickly.  
Finally, the algorithm is guaranteed to converge to an $\epsilon$-optimal solution in finite iterations when the functions $p_j$ are continuous, and the sets $\Pi$ and $\mc{X}_j$ for all $j \in \mc{J}$ are compact and nonempty \cite{harwood2021equilibrium}.

\subsubsection{Decomposition applied to competitive pooling}

When specialized to our competitive pooling problem setting,
let $\v{x}_j = (\v{c}_j,\v{f}_j,\v{u}_j)$ for all $j \in \mc{J}$ (pooling players, not suppliers).
In the Nash-Cournot setting, $\Pi$ takes the form
\begin{equation}
\Pi_{\textrm{NC}} = \left\{ (\vpi,\x{}) \in \Re^{d} \times \Re^{\sum_j n_j}: \vpi_j = \v{x}_j~\forall j \in \mc{J} \right\}
\end{equation}
and the function $p_j(\bar{\v{x}}_j,\vpi)$ becomes
\begin{equation} \label{objfnc:nash_cournot_indep_linear_with_pi}
\sum_{n \in \Nout_j} \left[ \alpha_n - \beta_n \left( \bar{f}_{jn}^{\OUT} + \sum_{i \in \mc{J}: i \neq j} f_{in}^{\OUT} \right) \right] \bar{f}_{jn}^{\OUT} - \sum_{n \in \Nin_j} \left( C_{jn}^{\textrm{var}} \bar{f}_{jn}^{\IN} + C_{jn}^{\textrm{fixed}} \bar{u}_{jn} \right). 
\end{equation}
In the price taker (PT) (a.k.a. perfect competition) setting, $\Pi$ takes the form
\begin{equation}
\Pi_{\textrm{PT}} = \left\{ (\vpi,\v{q},\x{}) \in \Re^{d} \times \Re^{|\Nout|} \times \Re^{\sum_j n_j}: q_n = \sum_{j \in \mc{J}} f_{jn}^{\OUT} \right\}
\end{equation}
and the function $p_j(\bar{\v{x}}_j,\vpi)$ becomes
\begin{equation} \label{objfnc:price_taker_indep_linear_with_pi}
\sum_{n \in \Nout_j} \pi_n \bar{f}_{jn}^{\OUT} - \sum_{n \in \Nin_j} \left( C_{jn}^{\textrm{var}} \bar{f}_{jn}^{\IN} + C_{jn}^{\textrm{fixed}} \bar{u}_{jn} \right). 
\end{equation}
In this setting, we can embed the consumer into the master problem by taking its KKT conditions
\begin{equation} \label{eq:consumer_stationary_constraints}
\pi_n = \alpha_n - \beta_n \sum_{j \in \mc{J}} f_{jn}^{\OUT} \qquad \forall n \in \Nout
\end{equation} 
and including them in the global constraint set $\Pi$.
If an equilibrium does not exist and we embed constraints \eqref{eq:consumer_stationary_constraints} in $\Pi$, then no disequilibrium will be allocated to the consumer; it will be apportioned only to the suppliers.  However, if an equilibrium does exist, then this approach will return an equilibrium.

\cmtt{
\subsubsection{Accelerating Decomposition Algorithm~\ref{algo:cut_generating_minimum_disequilibrium}}
}
\cmtt{
Decomposition Algorithm~\ref{algo:cut_generating_minimum_disequilibrium} is inspired by semi-infinite programming and Benders decomposition.  In each major iteration of Algorithm~\ref{algo:cut_generating_minimum_disequilibrium}, the RMP~\eqref{model:minimum_disequilibrium_relaxed} generates a candidate solution (equilibrium), which then undergoes an optimality check in Step~\ref{step:master_optimality_check}.  If this check fails, cuts generated by the subproblems are appended to the RMP and the process repeats.  Since the RMP can be challenging to solve to provable optimality for nonconvex problems, it may be time-consuming to wait for the RMP to generate candidate equilibria.  
}
\cmtt{
There are at least two options for diminishing this reliance on the RMP for candidate solutions.
\begin{enumerate}
\item ``Warmstart Phase'': First, if one has access to an effective heuristic (which is commonly the case as discussed below), then one can perform a non-trivial ``warmstart phase'' in an attempt to pre-populate the set $\mc{X}_j^L$ of initial solutions with a number of near equilibria.  This warmstart phase typically reduces the number of major iterations (and, consequently, overall solution time) required to hone in on good solutions.  Of course, one could even stumble upon an equilibrium while performing this step. 
\item ``Perturbation Phase'': In each major iteration, the RMP~\eqref{model:minimum_disequilibrium_relaxed} generates a candidate solution that is guaranteed to be feasible for each player, but that may not satisfy the Nash equilibrium conditions $p_j^*(\vpi) \geq p_j(\v{x}_j,\vpi)$ for all $j \in \mc{J}$. In other words, the RMP is guided by the equilibrium cuts~\eqref{eq:equilibrium_cut} that have been generated, but because it cannot ``see'' all infinite cuts, it proposes candidate solutions that are likely not equilibria, especially in early iterations.  Rather than rely on the RMP to generate near equilibria, one can ``explore'' around the RMP's candidate solution $\hat{\vpi}$ by considering one or more perturbed solutions $\tilde{\vpi} \simeq \hat{\vpi}$.  This can be accomplished by asking a solver to generate a solution pool when solving the RMP; by exploiting problem structure to tweak the current solution; or by some other means.  While these perturbed solutions (or candidate neighbors) may not yield an equilibrium, they can generate useful cuts as the solutions can be appended to the set $\mc{X}_j^L$.   
\end{enumerate}
We explore the first option, but not the second, in this work.
}

\subsection{Complementarity heuristics for perfect and Nash-Cournot competition} \label{sec:heuristics}

For ease of reference, we restate the heuristics used in this work (and described in Sections \ref{sec:perfect_competition} and \ref{sec:complementarity_heuristic}) to compare against our exact decomposition algorithm.  
\subsubsection{Perfect competition}
We solve the maximum social welfare model~\eqref{model:max_social_welfare}. When an equilibrium exists, we obtain prices by setting $\pi_n = \alpha_n - \beta_n \sum_{j \in \mc{J}} f_{jn}^{\OUT}$ for all $n \in \Nout$. Under these assumptions, solving the maximum social welfare model~\eqref{model:max_social_welfare} can be considered an exact method as it is guaranteed to find an equilibrium.  When no equilibrium exists, solving the maximum social welfare model~\eqref{model:max_social_welfare} alone may not detect the non-existence of an equilibrium, which is why we will refer to ``solving the maximum social welfare model~\eqref{model:max_social_welfare}'' as a heuristic.

\subsubsection{Nash-Cournot competition}
When no integer decisions are present, we solve \cmtt{Monolithic Complementarity} Model~\eqref{model:relaxed_monolithic_NC}. As a reminder, the KKT conditions to this continuous optimization problem are necessary, but not sufficient, due to the presence of nonconvex bilinear constraints.  When integer decisions are present, we again attempt to solve Model~\eqref{model:relaxed_monolithic_NC} after re-imposing integrality, i.e., by enforcing $x_{j\ell} \in \Z ~\forall \ell \in \mc{L}_j$. It is for these reasons that we refer to the complementarity approach as a heuristic.

\subsection{Other methods}
There are a number of alternative approaches for heuristically solving complex equilibrium problems, some of which we describe below.
Since we do not formally compare against these methods, we prefer to organize them into one subsection for ease of reference.
In general, the methods listed below are fast and useful when attempting to obtain approximate market prices and player decisions. Their main drawback is that their convergence properties are either poorly understood or non-existent even when all players are convex.  Solution time, however, is a critical factor and we readily admit that the relaxed master problem \eqref{model:minimum_disequilibrium_relaxed} becomes increasingly time consuming as the number of players grows and when individual player constraints are numerous and nonconvex.  
One could potentially decompose the RMP via a penalization or ADMM scheme, but this is beyond the scope of this work. On the other hand, a potential benefit of attempting to minimize disequilibrium via decomposition is that in the course of solving the RMP~\eqref{model:minimum_disequilibrium_relaxed}, if the lower bound $\eta_L$ on disequilibrium ever exceeds zero, then we have a proof that no equilibrium exists.  No such guarantee is provided by a heuristic.

\subsubsection{Mesh grid approximation} \label{sec:mesh_grid}

For problems in which $\vpi$ is low-dimensional, one could attempt to decompose the minimum disequilibrium model \eqref{model:minimum_disequilibrium} into a first- and second-stage problem as follows: 
\begin{equation} \label{model:eta_of_pi}
\eta = 
\min_{\vpi \in \Re^d} \eta(\vpi),
\end{equation}
where
\begin{alignat}{3} \label{model:minimum_disequilibrium_restricted_given_pi}
\eta(\vpi) =
\min_{\x{}} 	& \sum_{j \in \mc{J}} \big( p_j^*(\vpi) - p_j(\v{x}_j,\vpi) \big) \notag \\
\st~~ 				
       & (\vpi,\x{}) \in \Pi \\
\notag & \v{x}_j \in \mc{X}_j  \quad \forall j \in \mc{J}
\end{alignat}
For example, when $\vpi$ is 1- or 2-dimensional, then it may be possible to construct a mesh grid or discretized grid of $\vpi$ vectors and solve for the disequilibrium $\eta(\vpi)$ to approximate the minimum disequilibrium. 
Specifically, once $\vpi$ is fixed, one can solve player $j$'s subproblem for $p_j^*(\vpi)$, which then becomes a constant in the objective function of \eqref{model:minimum_disequilibrium_restricted_given_pi}. The $\eta(\vpi)$ problem \eqref{model:eta_of_pi} essentially decomposes by player $j$ except for a single constraint set $\Pi$ that links them together. 

\subsubsection{Black box search}

For any $\v{x}_j \in \mc{X}_j$ and $\vpi$, we may define player $j$'s disequilibrium as 
\begin{equation} \label{eq:disequilibrium_def}
\delta_j(\v{x}_j,\vpi) = p_{j}^*(\vpi) - p_j(\v{x}_j,\vpi) \quad \forall j \in \mc{J}.
\end{equation}
Note that, by definition, $\delta_i(\v{x}_i,\vpi) \geq 0$.
Then, a point $(\vpi,\x{}) \in \Pi$ is a Nash equilibrium if 
\begin{equation}
\delta_j(\v{x}_j,\vpi) = 0 \quad \forall j \in \mc{J}.
\end{equation}
Note that our intuitive notion/definition of disequilibrium is related (equivalent) to the Nikaido-Isoda function commonly used in equilibrium modeling \cite{harwood2021equilibrium}.
Alternatively, for any $\epsilon > 0$, a point $(\vpi,\x{}) \in \Pi$ is an $\epsilon$-Nash equilibrium if 
\begin{equation}
\max\{ \delta_j(\v{x}_j,\vpi) : j \in \mc{J} \} < \epsilon~.
\end{equation} 
One could just as well perform a black box search over $(\vpi,\x{}) \in \Pi$ and $\v{x}_j \in \mc{X}_j$ to find an $\epsilon$-Nash equilibrium.  This generic view of the problem allows one to consider genetic algorithm, gradient descent, simulated annealing, or any other black box solver of interest. In a nonconvex setting, convergence guarantees to a global optimum are rarely available for black box solvers.

\subsubsection{Practitioner's method} \label{sec:practitioners_method}

Facchinei and Kanzow \cite[Section 5.1]{Facchinei2007} describe a popular ``practitioner's method'' for generalized Nash equilibrium problems, i.e., problems in which a player's feasible region and objective function are dependent on other players' decisions.  Due to its popularity and ease of implementation,  we describe the method in pseudocode in Algorithm~\ref{algo:practitioners_method_Jacobi_type} for the Nash-Cournot setting. It begins with an initial solution $\v{x}^0 = (\v{x}_1^0,\dots,\v{x}_N^0)$ where the superscript indexes the iteration. At each iteration $k$, one solves player $j$'s problem as a function of $\v{x}_{-j}^k$ and retains the resulting maximizer as $\v{x}_j^{k+1}$ for the next iteration. A typical termination criterion is that no player's objective function improves from one iteration to the next, i.e., $\tilde{p}_j(\v{x}_j^{k},\v{x}_{-j}^{k}) \geq \tilde{p}_j(\v{x}_j^{k+1},\v{x}_{-j}^{k})$ for all $j \in \mc{J}$. 
\begin{algorithm}
\caption{Nonlinear Jacobi-type Method}
\label{algo:practitioners_method_Jacobi_type}
\begin{algorithmic}[1]
\STATE choose a starting point $\v{x}^0 = (\v{x}_1^0,\dots,\v{x}_1^N)$; set iteration counter $k=0$
\WHILE{termination criteria are not satisfied}
\FOR{each player $j \in \mc{J}$}
	\STATE Solve player $j$'s subproblem for $\v{x}_j^{k+1} \in \arg\max\{ \tilde{p}_j(\v{x}_j,\v{x}_{-j}^{k}) : \v{x}_j \in \mc{X}_j \}$ \label{step:solve_subproblems_practitioners_method_Jacobi}
\ENDFOR
\STATE $k = k+1$;
\ENDWHILE
\end{algorithmic}
\end{algorithm}

Algorithm~\ref{algo:practitioners_method_Jacobi_type} is considered a Jacobi-type method because Step~\ref{step:solve_subproblems_practitioners_method_Jacobi} uses the vector $\v{x}_{-j}^{k}$ to determine the newest maximizer.  This step involves the solution of $N$ independent subproblems and can be executed in parallel.  Alternatively, one could apply a Gauss-Seidel-type update in which player $j$'s subproblem depends on $(\v{x}_1^{k+1},\dots,\v{x}_{j-1}^{k+1},\v{x}_{j+1}^{k},\dots,\v{x}_{N}^{k})$ reflecting the latest information available for players $1$ through $j-1$. 

While there are multiple well-documented challenges associated with these heuristics, arguably the most salient difficulty, as Facchinei and Kanzow attest, is ``the convergence properties of [these methods] are not well-understood. ... [A]t present, they can be considered, at most, good and simple heuristics'' \cite[p.196]{Facchinei2007}. \cmtt{As such, we experiment with two very simple variants to generate warmstart solutions (candidate equilibria) when considering our larger-scale instances in Section~\ref{sec:algorithm_scaleup}.
} 

\cmtt{
\begin{algorithm}
\caption{Warmstart Method: Price-taker setting}
\label{algo:warmstart_method_Price_Taker}
\begin{algorithmic}[1]
\STATE Set iteration counter $k=0$; Set $(\v{x}_1^{0},\v{x}_2^{0})=(\v{0},\v{0})$
\STATE Generate random prices: $\pi_n=\alpha_n \texttt{Uniform}(0.05,0.95)$ for $n \in \Nout$, where $\texttt{Uniform}(a,b)$ is a uniform random variable from $a$ to $b$.
\STATE \textbf{for} each player $j \in \mc{J}$, solve player $j$'s subproblem $\max\{ p_j(\v{x}_j,\vpi) : \v{x}_j \in \mc{X}_j \}$ for $\v{x}_j^{1}$ and $p_j^U(\vpi)$
\RETURN $(\v{x}_1^{1},\v{x}_2^{1})$
\end{algorithmic}
\end{algorithm} 
}

\cmtt{Algorithm~\ref{algo:warmstart_method_Price_Taker} describes a ``single-iteration'' heuristic for the price-taker setting in which a vector of random prices is generated and then each player optimizes with respect to these prices; no additional solves are performed because our main goal is to investigate our minimum disequilibrium algorithm~\ref{algo:cut_generating_minimum_disequilibrium}.  Note that it is extremely rare for this approach to generate player solutions $f_{jn}^{\OUT}$ satisfying $\pi_n = \alpha_n-\beta_n \sum_{j \in \mc{J}} f_{jn}^{\OUT}$ for $n \in \Nout$. This is important because it means that, while it is easy to determine if an equilibrium has \textit{not} been achieved, additional actions are needed to produce and/or confirm that an equilibrium has been found.
}

\cmtt{
\begin{algorithm}
\caption{Gauss-Seidel-type Warmstart Method: Nash-Cournot setting}
\label{algo:warmstart_method_Nash_Cournot_Gauss_Seidel_type}
\begin{algorithmic}[1]
\STATE Set iteration counter $k=0$; Set $(\v{x}_1^{0},\v{x}_2^{0})=(\v{0},\v{0})$
\STATE Generate random prices: $\pi_n=\alpha_n \texttt{Uniform}(0.05,0.95)$ for $n \in \Nout$ 
\STATE Fix player 2's output: $f_{2n}^{\OUT} = (\alpha_n - \pi_n)/\beta_n$ for $n \in \Nout$; Update these components in $\v{x}_{2}^{0}$ \label{step:fix_player2s_output}
\STATE Solve player $1$'s subproblem $\max\{ \tilde{p}_1(\v{x}_1,\v{x}_{2}^{0}) : \v{x}_1 \in \mc{X}_1 \}$ for $\v{x}_1^{1}$ \label{step:player1_first_solve}
\STATE Solve player $2$'s subproblem $\max\{ \tilde{p}_2(\v{x}_1^1,\v{x}_{2}) : \v{x}_2 \in \mc{X}_2 \}$ for $\v{x}_2^{1}$ \label{step:player2_first_solve}
\STATE Solve player $1$'s subproblem $\max\{ \tilde{p}_1(\v{x}_1,\v{x}_{2}^{1}) : \v{x}_1 \in \mc{X}_1 \}$ for $\v{x}_1^{2}$ and $\tilde{p}_1^U(\v{x}_{2}^{1})$ \label{step:player1_verification_solve}
\STATE Solve player $2$'s subproblem $\max\{ \tilde{p}_2(\v{x}_1^2,\v{x}_{2}) : \v{x}_2 \in \mc{X}_2 \}$ for $\tilde{p}_2^U(\v{x}_{1}^{2})$ \label{step:player2_verification_solve}
\RETURN $(\v{x}_1^{2},\v{x}_2^{1})$,$(\tilde{p}_1^U(\v{x}_{2}^{1}),\tilde{p}_2^U(\v{x}_{1}^{2}))$
\end{algorithmic}
\end{algorithm} 
}

\cmtt{Algorithm~\ref{algo:warmstart_method_Nash_Cournot_Gauss_Seidel_type} is a particular implementation of the Practitioner's heuristic for the Nash-Cournot setting. After generating random prices at each output node $n \in \Nout$, Step~\ref{step:fix_player2s_output} sets player 2's output $f_{2n}^{\OUT}$ such that $\pi_n = \alpha_n - \beta_n f_{2n}^{\OUT}$ for all $n \in \Nout$. Consequently, the initial solution $(\v{x}_1^{0},\v{x}_2^{0})$ may not be feasible. Steps~\ref{step:player1_first_solve} and \ref{step:player2_first_solve} therefore find feasible solutions $\v{x}_j^1 \in \mc{X}_j$ for each player $j \in \mc{J}$ (in a Gauss-Seidel manner), while Steps~\ref{step:player1_verification_solve} and \ref{step:player2_verification_solve} are needed to compute the upper/dual bounds $\tilde{p}_j^U(\cdot)$.
Why do we return $(\v{x}_1^{2},\v{x}_2^{1})$ and not $(\v{x}_1^{2},\v{x}_2^{2})$?  Because we computed $\tilde{p}_1^U(\v{x}_{2}^{1})$, not $\tilde{p}_1^U(\v{x}_{2}^{2})$, in Step~\ref{step:player1_verification_solve} and we use $\tilde{p}_1^U(\v{x}_{2}^{1})$ in our disequilibrium calculation.  See equation~\eqref{eq:relative_disequilibrium_gap_definition} and the discussion thereafter.     
}

\subsection{Algorithm summary}

\cmtt{
We end this section with a brief qualitative comparison of the methods that we have discussed and which we compare below.  Table~\ref{tbl:Qualitative_algorithm_comparison} attempts to capture fundamental differences in convergence guarantees and relative speed for \textit{nonconvex} games; convex games are irrelevant in this work. We distinguish between two types of convergence: convergence of the algorithm itself to a solution, not necessarily an equilibrium (``Algorithm'' column) and  convergence to an equilibrium or to a certificate that no equilibrium exists (``Equilibrium or proof of non-existence'' column).  The final two columns qualify the speed to arrive at a candidate equilibrium relative to the other methods and where the computational difficulty lies.
}

\cmtt{
The first three rows are associated with heuristics that are relatively fast, but have no guarantee of finding an equilibrium or proving that no equilibrium exists.  Worse, the practitioner's method may not even converge to a solution.  It is worth re-iterating that if one were to consider nonlinear inverse demand functions, then it may be impossible to formulate a monolithic complementarity optimization model, in which case one would have to explicitly write and solve a nonlinear KKT system as discussed in the Introduction. This KKT system is, in effect, a feasibility problem and thus there is no need to distinguish between local and global optimality. Meanwhile, our ``pure'' (i.e., sans warmstarts) minimum disequilibrium algorithm provides convergence guarantees, but at the expense of slower solution times.  This can be remedied by integrating a heuristic at the outset to find high-quality candidate equilibria.  It is also possible to integrate heuristics elsewhere within Algorithm~\ref{algo:cut_generating_minimum_disequilibrium}, although we did not pursue this extension in this work.
}

\begin{table}[h]									
\centering	
{\color{black}									
\begin{tabular}{p{5.5cm}cp{2.1cm}p{2.4cm}p{3cm}}									
\toprule									
Method	&	\multicolumn{2}{c}{Convergence guarantees}			&	Relative speed	&	Speed depends on difficulty of	\\
\cmidrule(lr){2-3}
	&	Algorithm	&	Equilibrium or proof of non-existence 	&		&		\\
\midrule									
Practitioner's method (Algorithm~\ref{algo:practitioners_method_Jacobi_type})	&	No	&	No	&	Fast	&	Subproblems~\eqref{model:agent_problem_Nash} 	\\
Complementarity -- Global	&	Yes	&	No	&	Moderately fast	&	Model~\eqref{model:max_social_welfare_projected} or \eqref{model:relaxed_monolithic_NC}	\\
Complementarity -- Local	&	Yes	&	No	&	Fast	&	Model~\eqref{model:max_social_welfare_projected} or \eqref{model:relaxed_monolithic_NC}	\\
MD (Algorithm~\ref{algo:cut_generating_minimum_disequilibrium})	&	Yes	&	Yes	&	Slow	&	RMP~\eqref{model:minimum_disequilibrium_relaxed} and subproblems~\eqref{model:agent_problem_Nash}	\\
MD + Warmstarts  (Algorithm~\ref{algo:cut_generating_minimum_disequilibrium} + Algorithm~\ref{algo:warmstart_method_Price_Taker} or \ref{algo:warmstart_method_Nash_Cournot_Gauss_Seidel_type})	&	Yes	&	Yes	&	Fast to moderately fast	&	RMP~\eqref{model:minimum_disequilibrium_relaxed} and subproblems~\eqref{model:agent_problem_Nash}	\\
\bottomrule									
\end{tabular}									
\caption{Qualitative algorithm comparison for \textit{nonconvex} problems. ``Complementarity -- Global'' and ``-- Local'' = Solving Model~\eqref{model:max_social_welfare_projected} or \eqref{model:relaxed_monolithic_NC} to global and local optimality, respectively. ``MD'' = our pure Minimum Disequilibrium algorithm with no warmstart solutions ($\mc{X}_j^L = \emptyset$ for all $j \in \mc{J}$).}									
\label{tbl:Qualitative_algorithm_comparison}	
}								
\end{table}									

\section{Numerical results} \label{sec:results}

In this section, 
we present empirical analysis to highlight the reason why one might consider modeling competitive behavior in pooling-related problems or more general mixed-integer nonlinear optimization problems. 
We also show that our cutting plane algorithm for minimizing disequilibrium is capable of solving challenging non-cooperative pooling problems. 
All models and algorithms were coded in AIMMS version \cmtt{4.86.7.5} and solved serially with \cmtt{Gurobi 9.5. All instances (i.e., of the RMP, subproblems, and complementarity models) were declared optimal using a relative optimality gap of 1e-4}.

We begin our experiments with variants of the (in)famous Haverly pooling problem \cite{haverly1978studies} shown in Figure~\ref{fig:haverly_p}. In the traditional Haverly pooling problem, there is a single producer whose goal is to maximize profit by optimizing crude purchases and flows through a network. We extend this simple example to involve multiple producers (players) who compete to sell finished goods (output streams). There is a market for each output stream. 
Each player operates a processing network in which raw materials are purchased and processed (sometimes blended) into final goods that are then sold to maximize profit.
Each player purchases raw materials that enter the network at a set $\mc{N}^{\IN}$ of input nodes; exactly one type of raw material can be purchased at each input node and that type is known at the outset.   
Raw materials are then sent to an intermediate pool $n \in \mc{N}^{\textrm{pool}}$, where two or more raw materials are blended, or directly to an output node $n \in \mc{N}^{\OUT}$.
There is no flow between pools, from a pool to an input node, or from an output node to a pool or input node.
Each raw material has a concentration $C_{jn}^{\IN}$ when entering the network.
The concentration of an output stream at output node $n \in \mc{N}^{\OUT}$ must not exceed $C_{n}^{\max}$.

In our experiments, we assume that raw materials are modeled as semi-continuous variables with a variable and fixed cost component.  That is, if a raw material is purchased at input node $n \in \mc{N}^{\IN}$, then a one-time (lump sum) fixed cost is incurred (like a transaction cost) and a variable cost per unit is also incurred.  
\cmt{Note that, when no binary decision variables are present, Baltean-Lugojan and Misener \cite{baltean2018piecewise} show that the standard pooling problem with a ``single quality standard'' (and under several other conditions which hold for the original Haverly problem) can be solved in strongly-polynomial time. Hence, the player subproblems \eqref{model:agent_problem_Nash} to our standard (continuous) Haverly instances, but not our general (mixed-integer) instances, are solvable in strongly-polynomial time, 
while no such complexity result exists for finding pure Nash equilibria.}

\begin{figure}[h]  
\centering
\includegraphics[width=10cm]{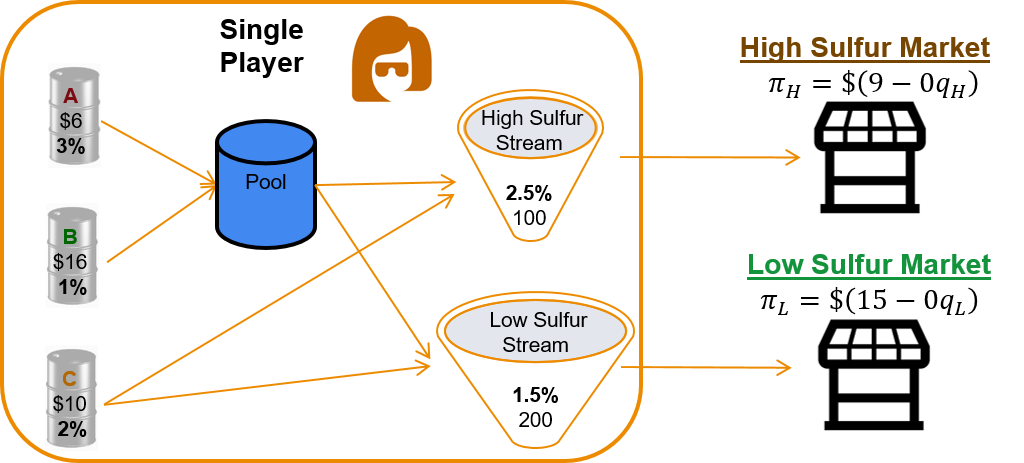}
\caption{Original Haverly pooling problem instance \cite{haverly1978studies}. Each crude has a unit cost and a sulfur concentration. After blending crudes, at most 100 and 200 units of a high and low sulfur stream can be produced with a maximum sulfur concentration of 2.5\% and 1.5\%, respectively. The high and low streams are then sold at a price of $\pi_H=\$9$ and $\pi_L=\$15$ per unit, respectively, regardless of the quantity ($q_H$ and $q_L$) produced. In economic terms, the stream prices are perfectly elastic. The globally optimal solution is to purchase 100 units of crudes B and C and produce 200 units of the low sulfur stream for a profit of \$400.}
\label{fig:haverly_p}
\end{figure}

\subsection{Illustrative example: Comparing perfect and imperfect competition}

To demonstrate why modeling competitive behavior is important, we first consider two Haverly-esque instances involving two non-cooperative players.  As shown in Figure~\ref{fig:Competitive_Haverly_pooling_2player_instance}, player 1's parameters are identical to those in the original Haverly instance. Meanwhile, player 2's variable cost ($C_{1n}^{\textrm{var}}$ for $n \in \Nin$) for crude A, B, and C is \$3, \$18, and \$11, respectively.  All other player 2 parameters match those of player 1.  Both players supply two independent markets each governed by a linear inverse demand function. The high and low sulfur market's parameters are $(\alpha_H,\beta_H)=(13,0.02)$ and $(\alpha_L,\beta_L)=(23,0.04)$, respectively. The low sulfur market's larger (negative) slope implies that demand for this good is more inelastic than that of the high sulfur market. As a point of reference, note that $\pi_H = \alpha_H - \beta_H 200 = 9$ and $\pi_L = \alpha_L - \beta_L 200 = 15$, i.e., if the total quantity supplied to each market is $q_H=q_L=200$, the market prices match those of the original Haverly instance.  

\begin{figure}[h] 
\centering
\includegraphics[width=10cm]{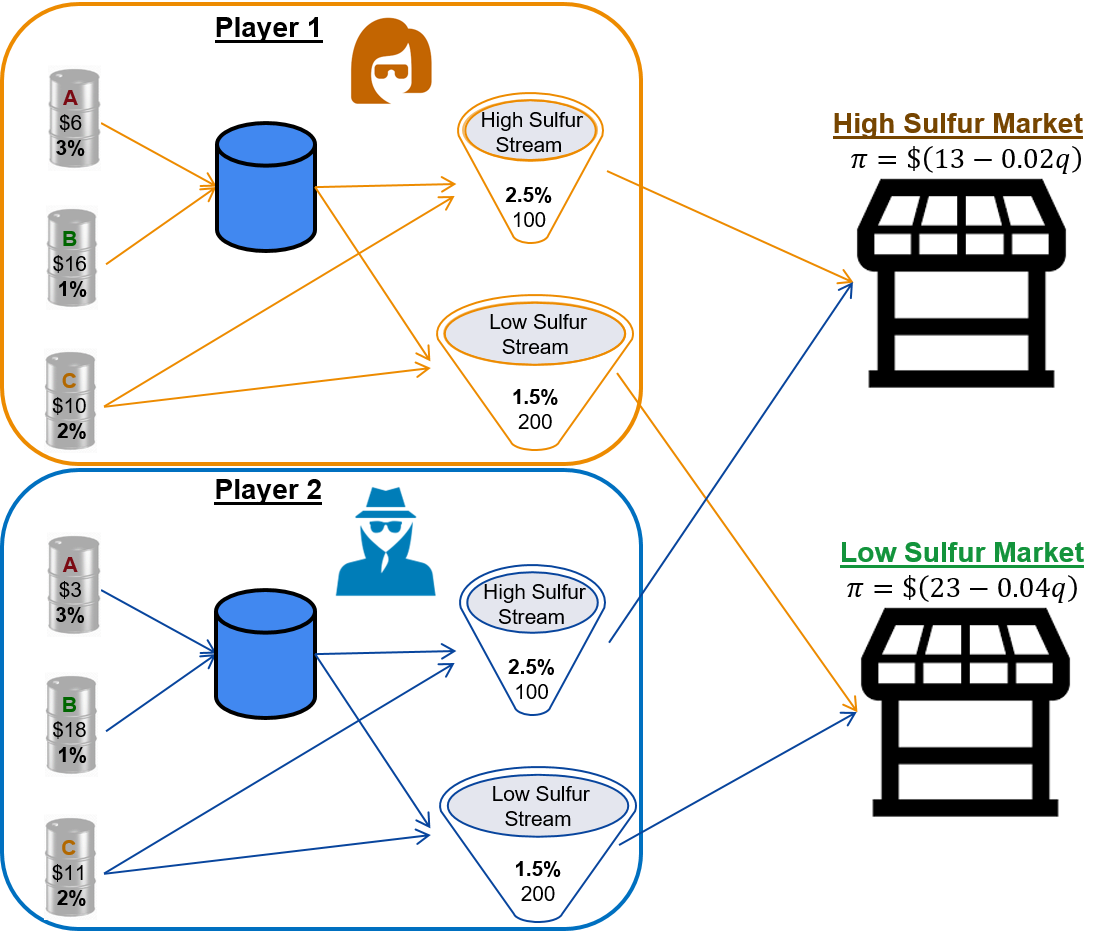}
\caption{Two-player Haverly-esque instance in which non-cooperative players vie to maximize their individuals profits by selling into two markets represented with independent linear inverse demand functions.}
\label{fig:Competitive_Haverly_pooling_2player_instance}
\end{figure}

In our first instance, there are no fixed costs (hence, no binary decisions) for either player, giving rise to a \textit{standard} pooling problem instance.  In the second instance, we introduce a fixed cost of $C_{jB}^{\textrm{fixed}}=200$ for $j=1,2$ if any amount of crude B, the lowest sulfur crude, is purchased. Hence, each player solves a \textit{generalized} pooling problem. For each instance, we consider two competitive settings: perfect competition and imperfect competition. In the former, both players act as price takers unable to anticipate the impact of their decisions on market prices.  In the latter, both players act strategically as Nash-Cournot players capable of anticipating the impact of their decisions on market prices.
 
\textbf{Standard pooling problem instance (Purely continuous setting)}. Figure~\ref{fig:Competitive_Haverly_pooling_2player_solutions_6_16_10_v_3_18_11_0FixedCost} shows an equilibrium solution for each competitive setting. Comparing the consequences of the two assumptions, we see that prices increase when imperfect competition is assumed. Namely, the price of the high sulfur stream increases from \$10 to \$10.5 per unit, while the price of the low sulfur stream increases from \$15 to \$18 per unit. Correspondingly, the total quantity of each decreases. Under the parameters chosen, player 2 makes identical decisions in both settings, whereas player 1 withholds production allowing her profits to increase from \$400 to \$637.5.  This withholding simultaneously allows player 2 to increase his profits from \$325 to \$375.

\textbf{Generalized pooling problem instance (Mixed-integer setting)}. Figure~\ref{fig:Competitive_Haverly_pooling_2player_solutions_6_16_10_v_3_18_11_1FixedCost} shows an equilibrium solution for each competitive setting after a fixed cost has been introduced.  Player 1's decisions are identical to those made in the corresponding instances with no fixed cost.  On the other hand, although player 2 continues to supply 100 units of the high sulfur stream, the presence of a fixed cost for crude B leads him to alter his crude selection. Instead of choosing to blend crudes A and B as shown in Figure~\ref{fig:Competitive_Haverly_pooling_2player_solutions_6_16_10_v_3_18_11_0FixedCost}, it is now more economical to blend crudes A and C.

\begin{figure}[h!] 
\centering
\includegraphics[width=13cm]{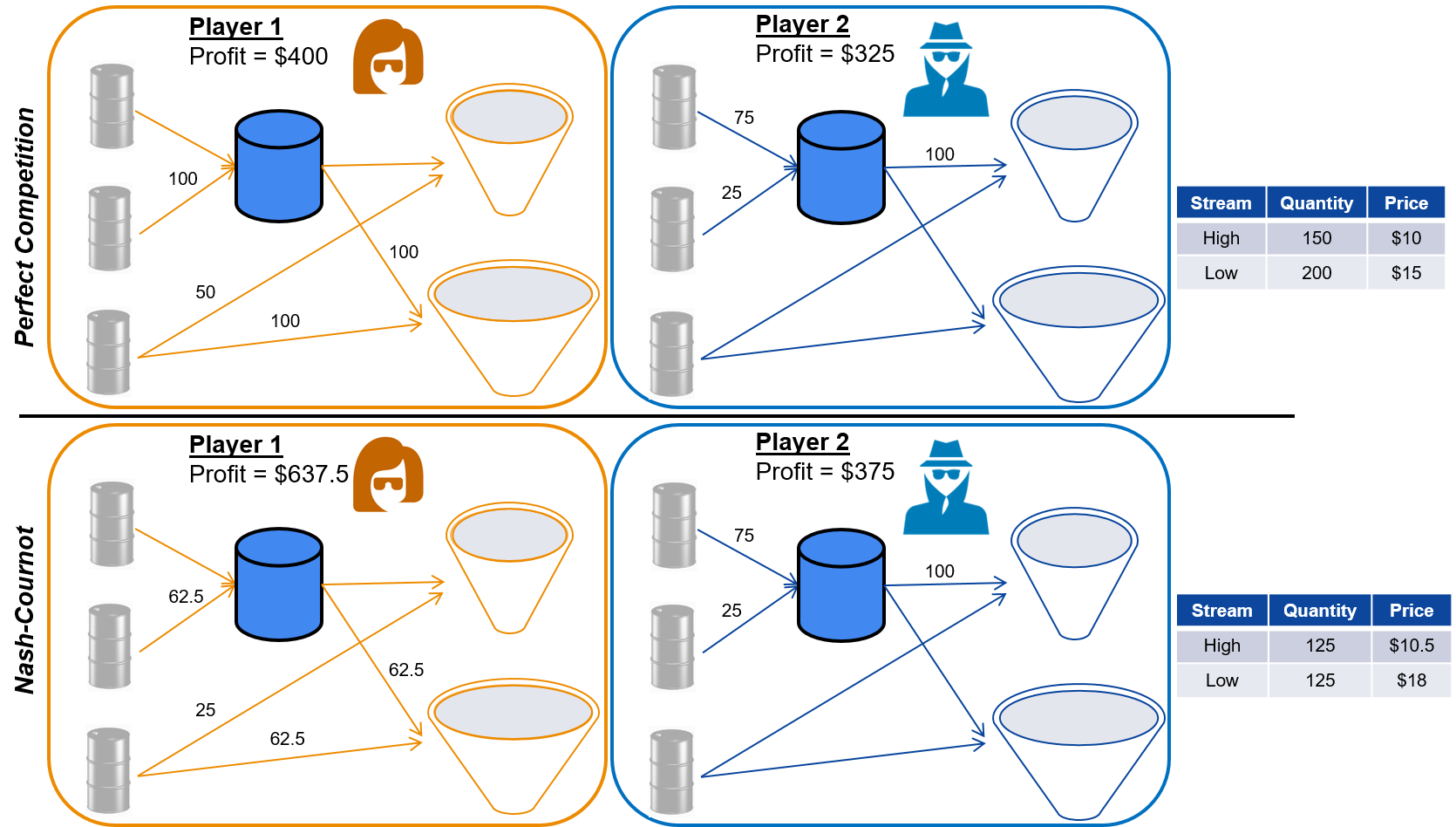}
\caption{Solution of our competitive two-player \textit{standard} (i.e., purely continuous) pooling problem instance.}
\label{fig:Competitive_Haverly_pooling_2player_solutions_6_16_10_v_3_18_11_0FixedCost}
\end{figure}

\begin{figure}[h!] 
\centering
\includegraphics[width=13cm]{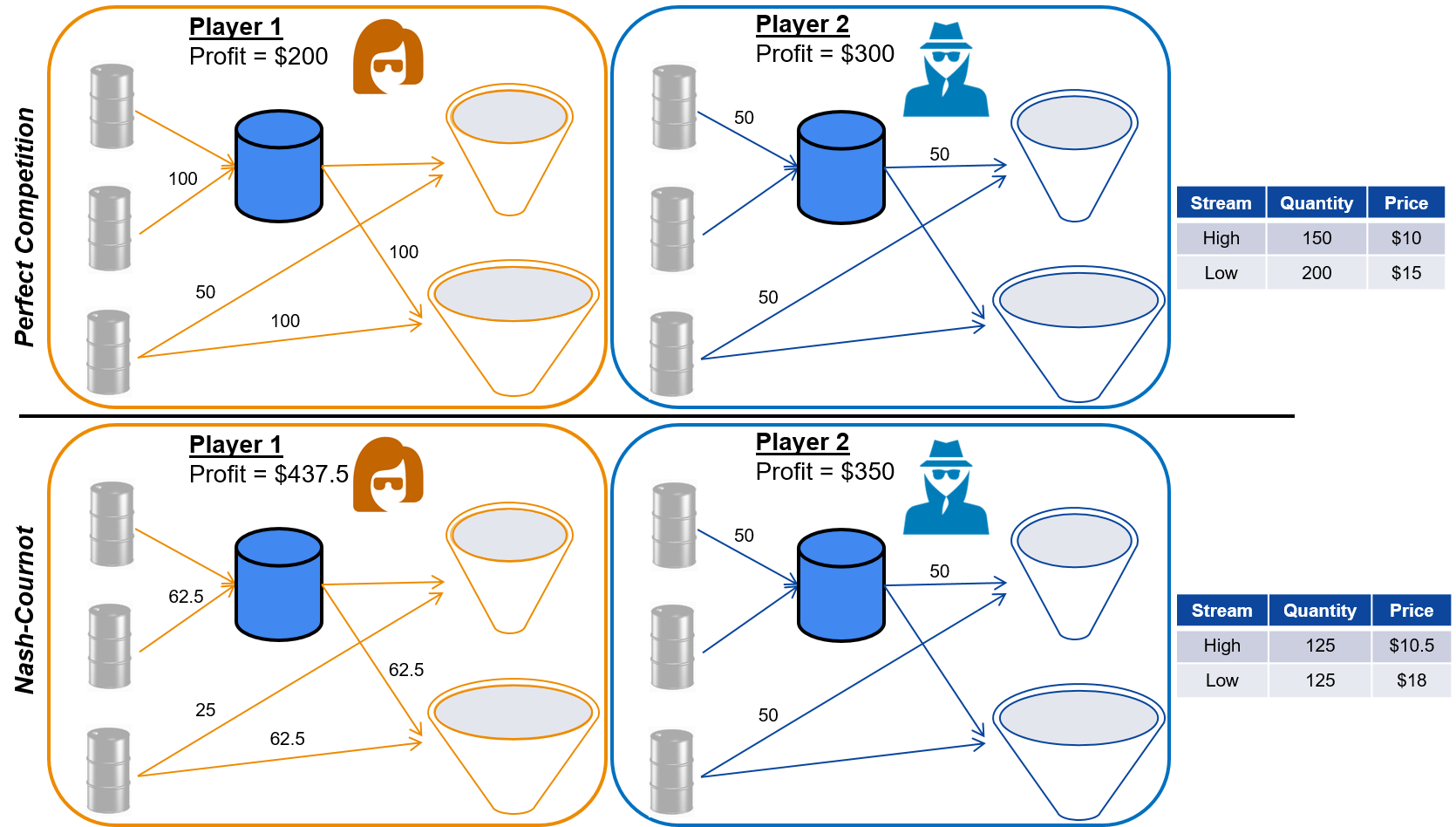}
\caption{Solution of our competitive two-player \textit{generalized} (i.e., mixed-integer) pooling problem instance.}
\label{fig:Competitive_Haverly_pooling_2player_solutions_6_16_10_v_3_18_11_1FixedCost}
\end{figure}

Several additional observations are noteworthy.  
First, a player's decisions may depend on the competitive setting assumed. 
Second, after introducing price elasticity and competition, neither player pursues the globally optimal solution to the original Haverly instance shown in Figure~\ref{fig:haverly_p}, which is to purchase 100 units of crudes B and C and produce 200 units of the low sulfur stream for a profit of \$400.   
Third, in the standard (purely continuous) pooling problem instance, player 1's profit is superior to player 2's in both competitive settings. This is no longer the case once a fixed cost is modeled as we see player 1's profit is less than player 2's in the perfect competition setting.  

For those familiar with the original Haverly problem, it is instructive to ask: 
Is the difference in the solution to the original Haverly instance in Figure~\ref{fig:haverly_p} and the standard pooling solutions shown in Figure~\ref{fig:Competitive_Haverly_pooling_2player_solutions_6_16_10_v_3_18_11_0FixedCost} due to the presence of a nonzero slope $\beta_n \neq 0$ for $n \in \Nout$ or to the introduction of competitive behavior? The answer is: It is due to both.  If the slope $\beta_n$ were 0 for $n \in \Nout$ (i.e., in economic speak, if demand remained perfectly elastic), the intercepts were identical to the original Haverly instance, i.e., $(\alpha_H,\alpha_L)=(9,15)$, and competition were introduced, then each player's optimal decision would be identical to that of the original Haverly instance in Figure~\ref{fig:haverly_p}. That is, the type of competition is irrelevant when the slopes are zero.  On the other hand, once the slopes are nonzero, the slopes and the type of competition assumed impact the players' decisions. 

\begin{figure}[h!] 
  \begin{subfigure}[b]{0.38\linewidth}
    \includegraphics[width=\textwidth]{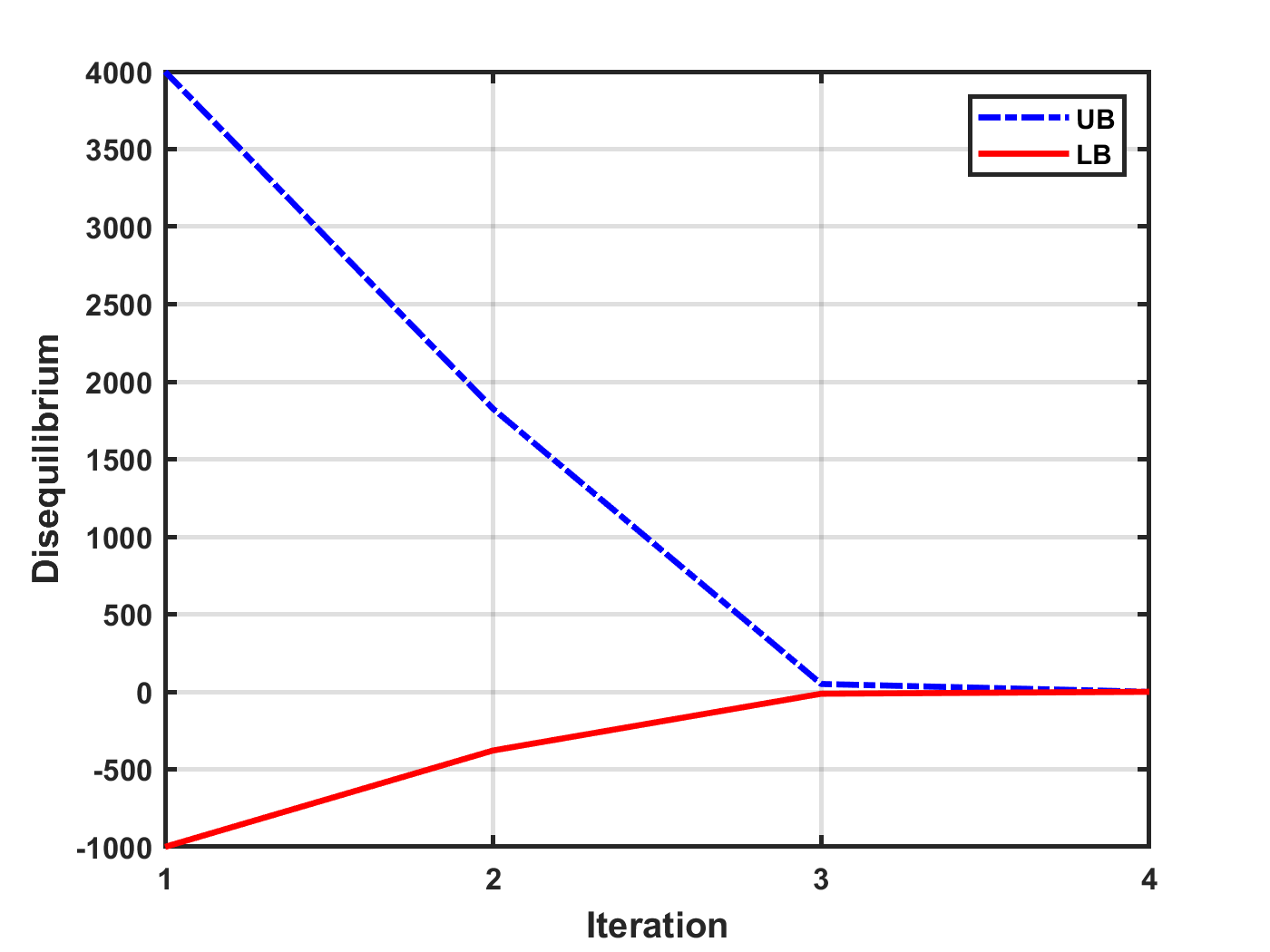}
    \caption{Convergence}
    \label{fig:1a}
  \end{subfigure}
  \begin{subfigure}[b]{0.38\linewidth}
    \includegraphics[width=\textwidth]{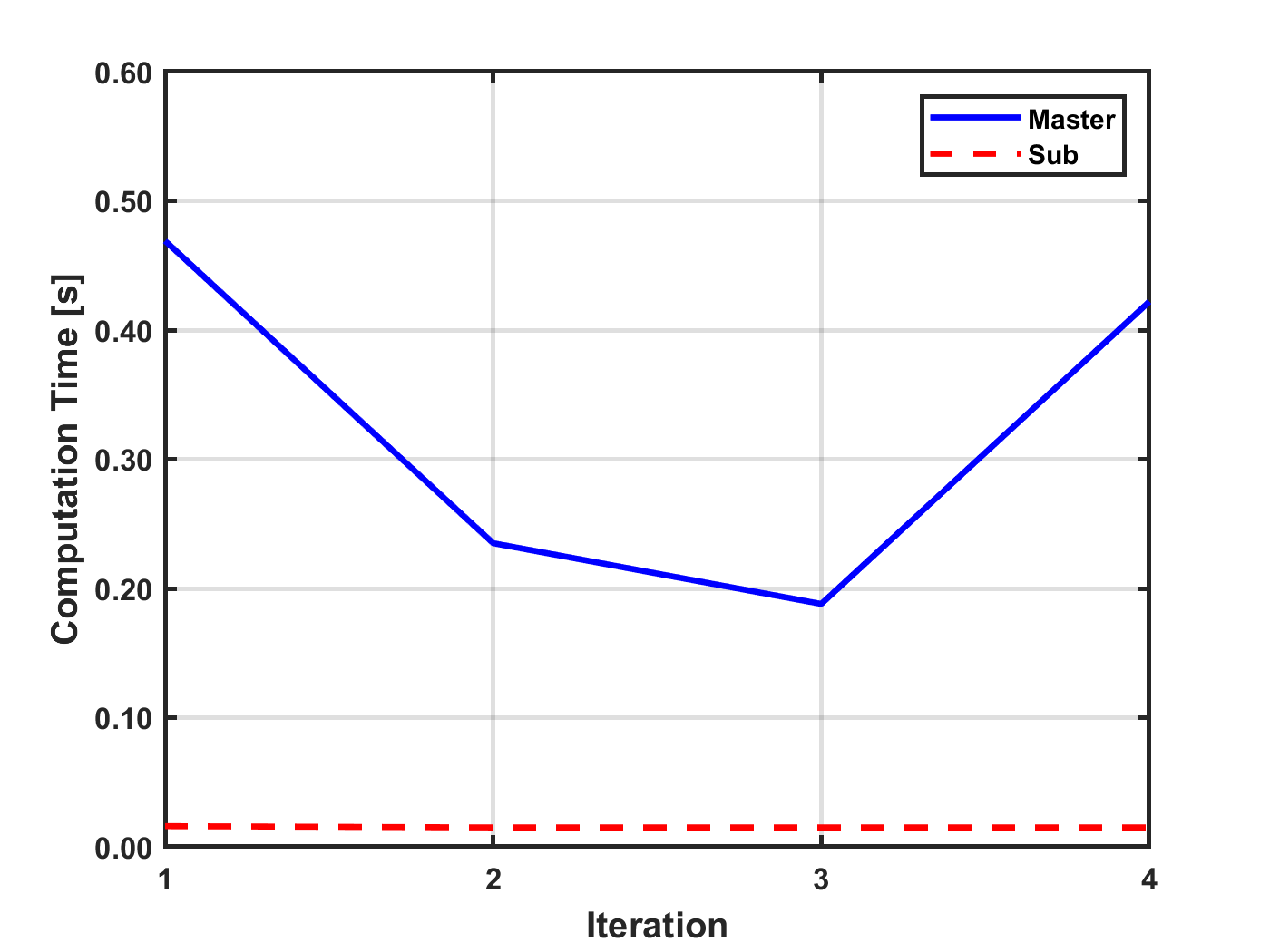}
    \caption{Solution times}
    \label{fig:1b}
  \end{subfigure}
  \caption{Algorithm performance for continuous perfect competition setting. (a) Upper bound (UB) $\eta^U$ and lower bound (LB) $\eta^L$ profiles of our minimum disequilibrium algorithm. (b) Solution times associated with solving the relaxed master problem (Master) and all individual player subproblems (Sub).}
  \label{fig:algo_performance_illustrative_example_pc}
\end{figure} 

\begin{figure}[h!] 
  \begin{subfigure}[b]{0.38\linewidth}
    \includegraphics[width=\textwidth]{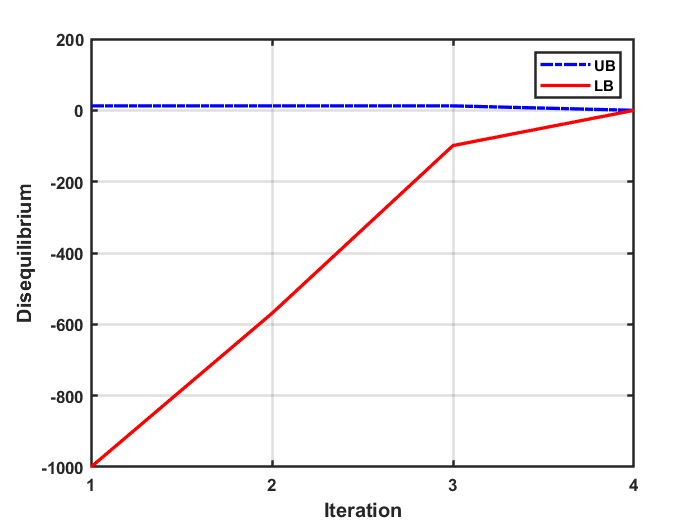}
    \caption{Convergence}
    \label{fig:2a_nc}
  \end{subfigure}
  \begin{subfigure}[b]{0.38\linewidth}
    \includegraphics[width=\textwidth]{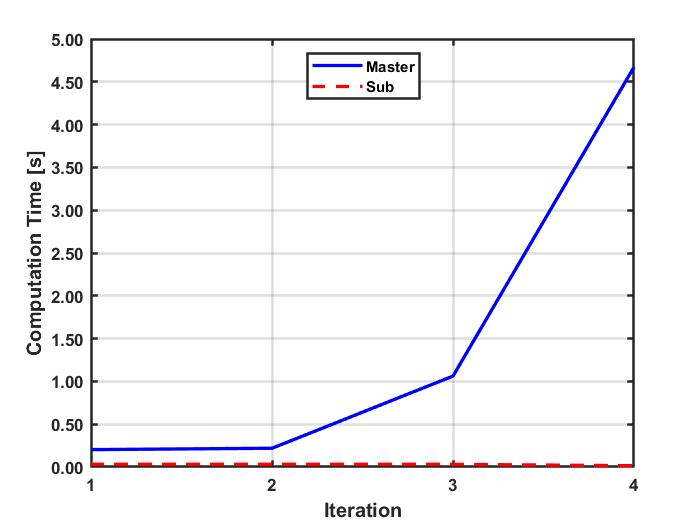}
    \caption{Solution times}
    \label{fig:2b_nc}
  \end{subfigure}
  \caption{Algorithm performance for continuous Nash-Cournot setting.}
  \label{fig:algo_performance_illustrative_example_nc}
\end{figure} 

We conclude this subsection by discussing algorithmic performance on these illustrative examples.
For these small instances, the heuristics from Section~\ref{sec:heuristics} produce an optimal solution (an equilibrium) in each case. 
Specifically, \cmtt{Gurobi 9.5} solves the resulting nonconvex MIQCQP instances of Model~\eqref{model:max_social_welfare} and Model~\eqref{model:relaxed_monolithic_NC} in roughly 0.03 seconds, while BARON 21 is closer to 0.55 seconds.
Meanwhile, Algorithm~\ref{algo:cut_generating_minimum_disequilibrium} solves the two continuous instances in four iterations and the two mixed-integer instances in three iterations. The results for the continuous instances are shown in Figure~\ref{fig:algo_performance_illustrative_example_pc} and Figure~\ref{fig:algo_performance_illustrative_example_nc}. Figure~\ref{fig:algo_performance_illustrative_example_nc}a suggests that a near-equilibrium (good upper bound) was found in the first iteration in the Nash-Cournot setting, unlike in Figure~\ref{fig:algo_performance_illustrative_example_pc}a in the perfect competition setting.
As for solution times, Figure~\ref{fig:algo_performance_illustrative_example_pc}b and  Figure~\ref{fig:algo_performance_illustrative_example_nc}b indicate that the aggregate time to solve all individual player subproblems in a single iteration of Algorithm~\ref{algo:cut_generating_minimum_disequilibrium} was between 0.01 and 0.03 seconds, on par with the amount of time required to solve the formulations \eqref{model:max_social_welfare} and \eqref{model:relaxed_monolithic_NC}.  On the other hand, the relaxed master problem \eqref{model:minimum_disequilibrium_relaxed}, a nonconvex QCQP, required between 0.2 and 0.5 seconds per solve in the perfect competition setting.  Figure~\ref{fig:algo_performance_illustrative_example_nc}b reveals that, in the Nash-Cournot setting, the solution times of the relaxed master problem \eqref{model:minimum_disequilibrium_relaxed} start small, but then reach 4.672 seconds in the last iteration of Algorithm~\ref{algo:cut_generating_minimum_disequilibrium}. 

\cmtt{
\subsection{When complementarity may fail to converge to an equilibrium}
\label{sec:comp_fails}
}
\cmtt{
As was underscored in Sections~\ref{sec:complementarity_heuristic} and \ref{sec:fallacy}, solving a complementarity model~\eqref{model:generic_nlp_kkt} or solving the monolithic Nash-Cournot model~\eqref{model:relaxed_monolithic_NC} for a nonconvex problem may fail to find an equilibrium because KKT conditions are only necessary, but not sufficient. 
It is instructive to observe this behavior with a concrete numerical example.
We now present a two-player standard (i.e., purely continuous) pooling instance with Nash-Cournot players in which an equilibrium exists, but the complementarity heuristic converges to a solution that is not an equilibrium. 
\begin{figure}[h]  
\centering
\includegraphics[width=13cm]{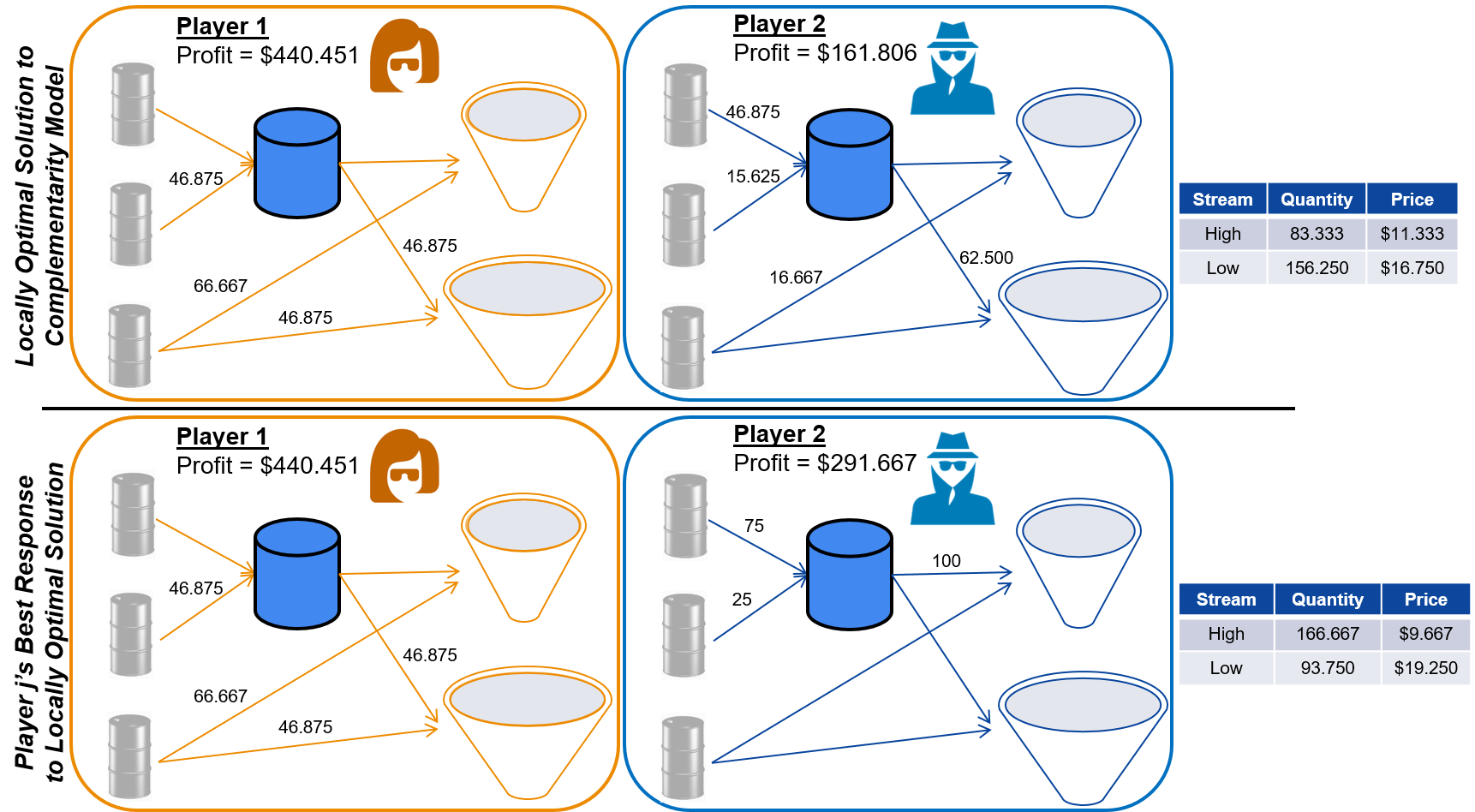}
\caption{Two-player Nash-Cournot instance for which an equilibrium exists, but solving monolithic Nash-Cournot model~\eqref{model:relaxed_monolithic_NC} to local optimality returns a solution that is \textit{not} a Nash equilibrium. Top row: Locally optimal solution (flows), profits, and prices obtained from solving  model~\eqref{model:relaxed_monolithic_NC} to local optimality with an initial solution of $\v{0}$. Bottom row: Player $j$'s optimal solution given that the other player's decisions are fixed to the values shown in the top row. Player 2 clearly has an incentive to deviate in order to earn a higher profit, illustrating that the locally optimal solution to model~\eqref{model:relaxed_monolithic_NC} cannot be a Nash equilibrium.}
\label{fig:complementarity_fails}
\end{figure} 
}

\cmtt{
Using the exact same parameter settings and conditions as shown in Figure~\ref{fig:Competitive_Haverly_pooling_2player_instance},
we attempt to find an equilibrium to this nonconvex Nash-Cournot game by solving monolithic Nash-Cournot model~\eqref{model:relaxed_monolithic_NC} to local optimality using an initial solution $(\v{x}_1,\v{x}_2)=(\v{0},\v{0})$, i.e., all decision variables are initialized to 0.
CONOPT 4.1 and IPOPT 3.11 both converge to the locally, but not globally, optimal solution shown in the top row of Figure~\ref{fig:complementarity_fails}. Both nonlinear solvers declare that the solution satisfies all KKT conditions and is therefore a locally optimal solution.
In this locally optimal solution, player 1 (player 2) earns a profit of roughly \$440 (\$162).
However, it is easy to show that this solution is \textit{not} an equilibrium.
Fixing each player's decision vector $\v{x}_{j}$ shown in the top row of Figure~\ref{fig:complementarity_fails}, one can determine the other player's optimal response and corresponding objective function value $p_j^*(\v{x}_{-j})$.  By definition, in a Nash equilibrium the objective function value $p_j^*(\v{x}_{-j})$ should not change (although the solution itself may change). The bottom row of Figure~\ref{fig:complementarity_fails} reveals that player 1's objective function value does not change (i.e., it stays at $\sim\$440$), whereas player 2 has an incentive deviate so that she can improve her profit from $\sim\$162$ to \$292. 
We re-iterate that, in our computational experiments, solving monolithic Nash-Cournot model~\eqref{model:relaxed_monolithic_NC} to global optimality did produce an equilibrium in all instances in which an equilibrium does, in fact, exist.  It is not clear to us if this result generalizes.
For a possible counterexample, see Appendix~\ref{app:counterexample}.
}

\subsection{What happens when no equilibrium exists?}
\label{sec:no_equilibrium}

Although rare, it is possible that no equilibrium exists.  In this situation, players always have an incentive to deviate given the decisions of all other players.  In this subsection, we showcase how our minimum disequilibrium approach is able to rigorously and systematically determine that no equilibrium exists, while returning the total disequilibrium (a strictly positive scalar) as a certificate of non-existence.  We compare this against the heuristics from Section~\ref{sec:heuristics}, which not only fail to provide a certificate of non-existence, but worse, fail to identify that no equilibrium exists. Note that a traditional fixed-point method \cmtt{(i.e., the Practitioner's method~\ref{algo:practitioners_method_Jacobi_type})} would also fail to provide this information.

\begin{figure}[h] 
\centering
\includegraphics[width=10cm]{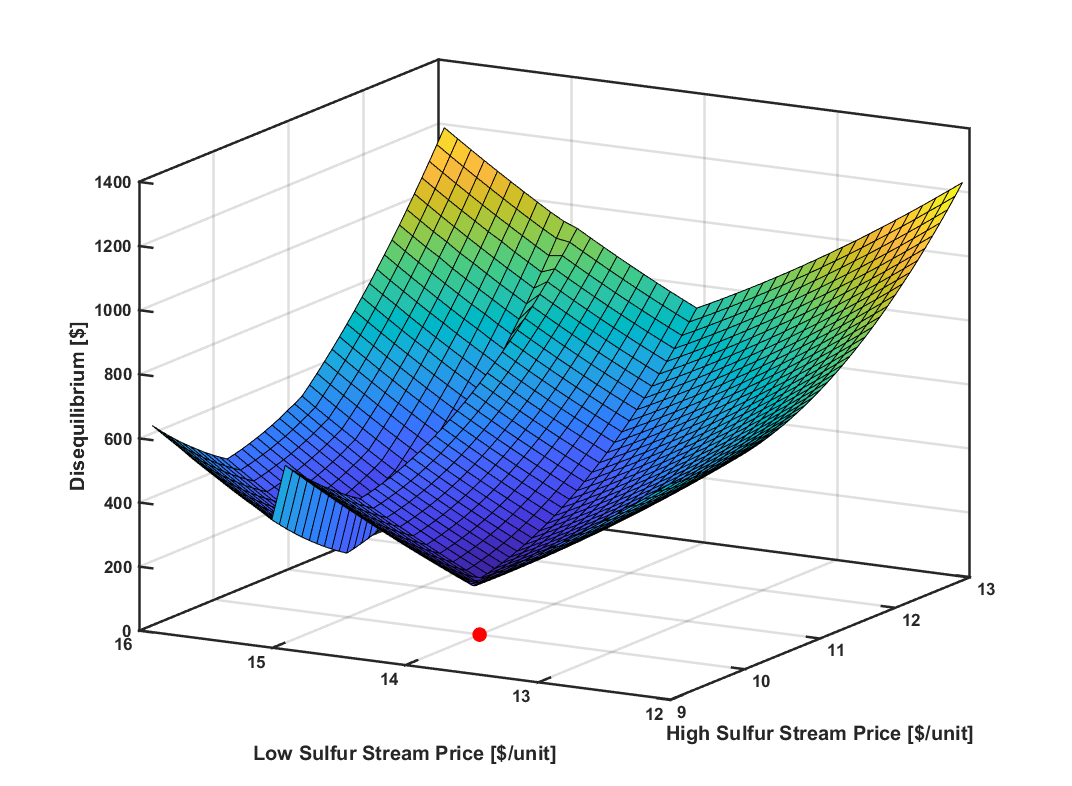}
\caption{Two-player perfect competition instance with no equilibrium. Both players have identical parameters and no fixed costs. Disequilibrium is minimized at a value of \$175 when the price of the high and low sulfur streams are \$10 and \$14 per unit, respectively.}
\label{fig:Competitive_Haverly_pooling_2player_PCinstance_0FixedCharge_NoEquilibrium}
\end{figure}

Consider a symmetric two-player perfect competition example in which both players have identical parameters equal to those shown for player 1 in Figure~\ref{fig:Competitive_Haverly_pooling_2player_instance}, i.e., variable cost $C_{jn}^{\textrm{var}}$ for crude A, B, and C is \$6, \$16, and \$10, respectively, for both players.  There are no fixed costs. The market parameters are those shown in Figure~\ref{fig:Competitive_Haverly_pooling_2player_instance}. Note that, in this example, we do not permit the consumer to have any disequilibrium.  Hence, we refer to this instance as a two-player (-supplier) perfect competition instance, when in fact it could also be treated as a three-player (two supplier, one consumer) instance.  

With this setup, Figure~\ref{fig:Competitive_Haverly_pooling_2player_PCinstance_0FixedCharge_NoEquilibrium} shows the disequilibrium summed over both players as a function of the price of each of the two output streams.  This figure was generated using the mesh grid approach described in Section~\ref{sec:mesh_grid} in which, after fixing the price vector $\vpi$ in 10 cent increments, one solves a restriction of the minimum disequilibrium model~\eqref{model:minimum_disequilibrium}.
In this example, disequilibrium is minimized when $(\pi_H,\pi_L)=(10,14)$, i.e., the price of the high (low) sulfur stream is \$10 (\$14) per unit. At this price vector, disequilibrium is \$175 and this disequilibrium is shared by the two identical suppliers; no disequilibrium is allowed to be allocated to the consumer. 
Figure~\ref{fig:Competitive_Haverly_pooling_2player_PCinstance_0FixedCharge_NoEquilibrium} is also interesting because it reveals potential discontinuities in the disequilibrium function.  

\begin{figure}[h] 
\centering
\includegraphics[width=8cm]{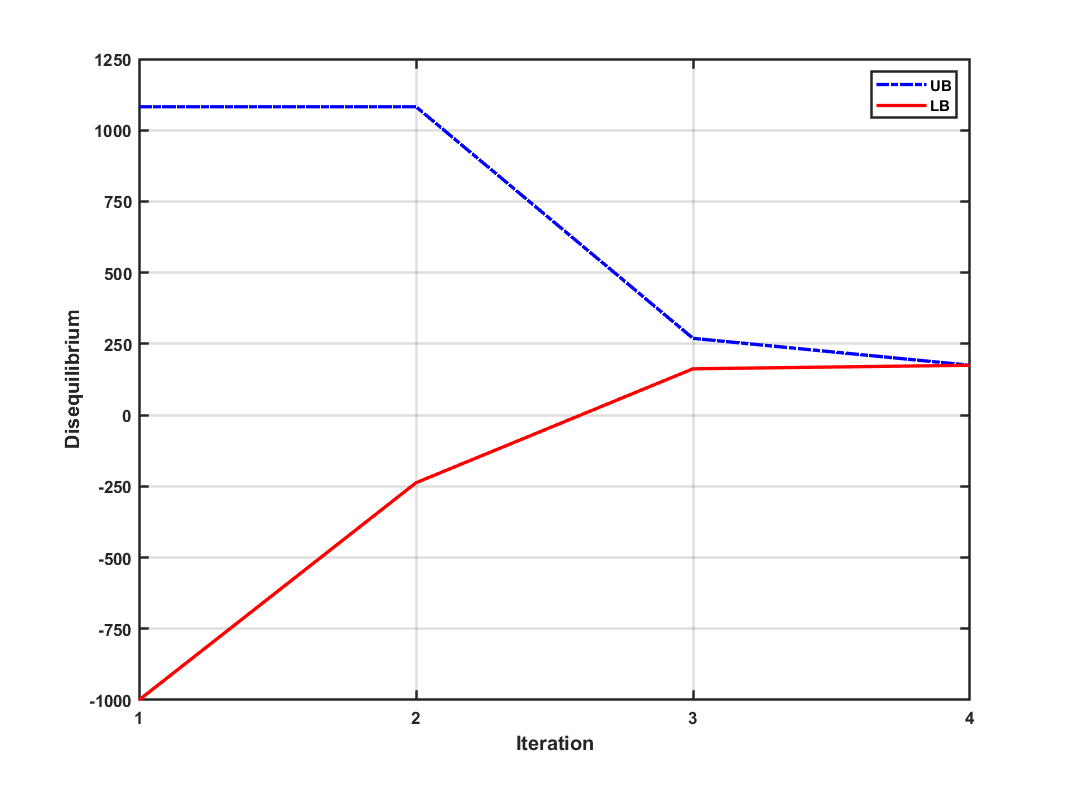}
\caption{Upper bound (UB) $\eta^U$ and lower bound (LB) $\eta^L$ profiles of our minimum disequilibrium algorithm on a two-player perfect competition instance with no equilibrium. Both players have identical parameters and no fixed costs. Disequilibrium is minimized at a value of 175, which is achieved in 4 iterations.}
\label{fig:Competitive_Haverly_pooling_2player_PCinstance_0FixedCharge_NoEquilibrium_bound_profiles}
\end{figure}

Using the heuristic in which we solve QCQP Model~\eqref{model:max_social_welfare} yields the solution shown for the perfect competition setting in Figure~\ref{fig:Competitive_Haverly_pooling_2player_solutions_6_16_10_v_3_18_11_0FixedCost}.  Player 1 makes a profit of \$400, while player 2 earns \$200.  However, this solution is not an equilibrium as should be intuitively clear.  The players have identical parameters and do not see what the other player does; as price takers, they only see prices.  Thus, given prices of \$10 and \$15 for the high and low sulfur streams, respectively, both players have the desire to make the same decisions to maximize profit. This leads to both players making decisions to earn a profit of \$400, showing that player 2 has an incentive to deviate.

\begin{figure}[h!] 
\centering
\includegraphics[width=8cm]{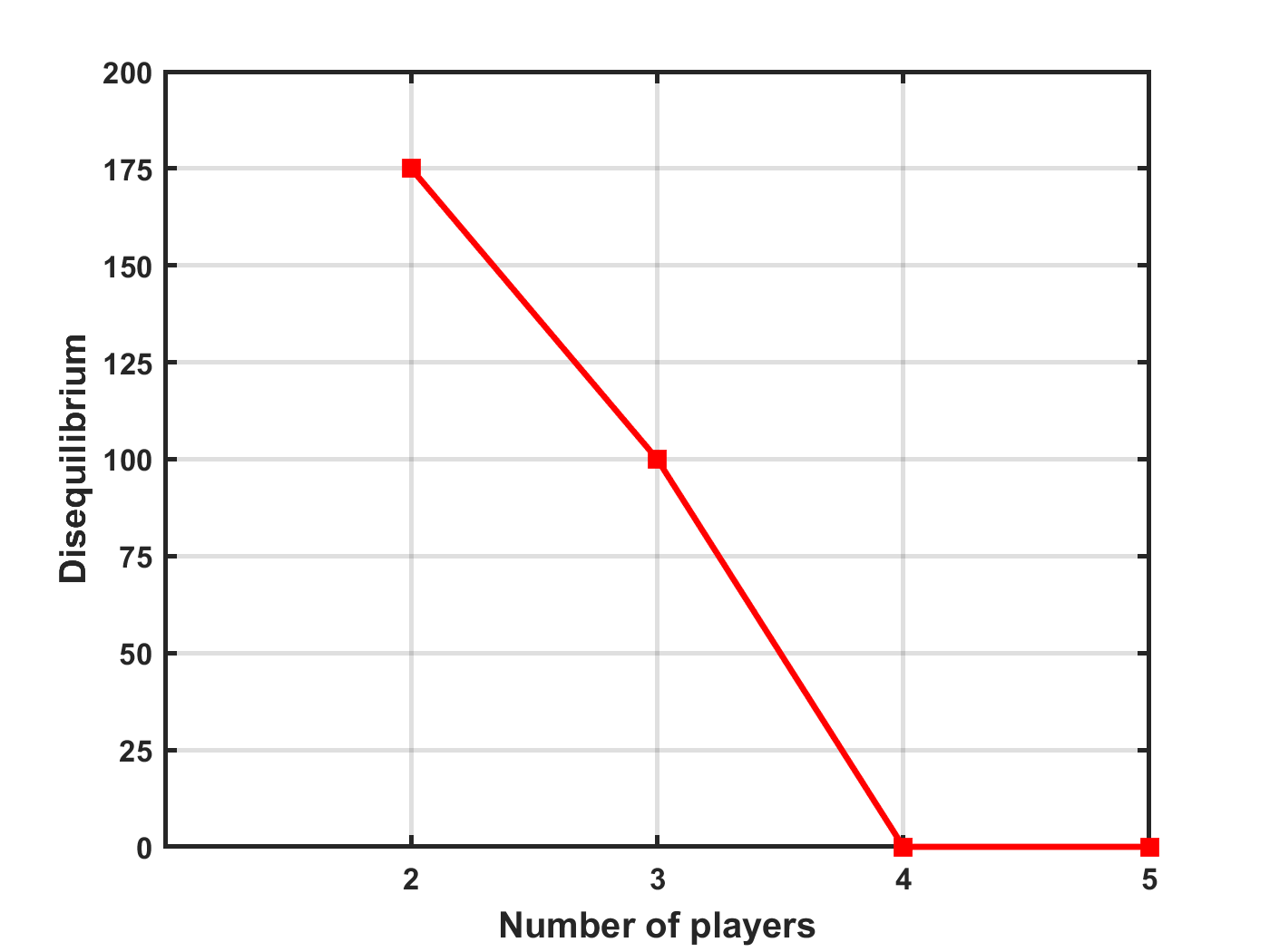}
\caption{Disequilibrium is eliminated in the perfect competition setting as the number of price-taking players $N$ increases.}
\label{fig:Disequilibrium_as_fnc_of_NumPlayers_PCexample}
\end{figure}   

In contrast to the heuristic, our approach of minimizing disequilibrium via Algorithm~\ref{algo:cut_generating_minimum_disequilibrium} provides a certificate (via the lower bound $\eta^L$) that no equilibrium exists.  Figure~\ref{fig:Competitive_Haverly_pooling_2player_PCinstance_0FixedCharge_NoEquilibrium_bound_profiles} shows the convergence of Algorithm~\ref{algo:cut_generating_minimum_disequilibrium} for this particular instance.  Algorithm~\ref{algo:cut_generating_minimum_disequilibrium} terminates in four iterations when the upper and lower bounds converge to a disequilibrium value of 175, proving that no solution with a disequilibrium less than 175 is possible. Note that if we were solely interested in proving that no equilibrium exists, the algorithm could have terminated at the end of iteration 3 when it found that $\eta_L \geq 162.5$.  For this example, however, we did not allow for early termination in order to find the optimum (minimum) disequilibrium. 

Incidentally, as shown in Figure~\ref{fig:Disequilibrium_as_fnc_of_NumPlayers_PCexample}, the minimum disequilibrium reduces to 100 when $N=3$ identical players are present.  When $N=4$, the minimum disequilibrium reduces further to 0 and an equilibrium is found.  This observation is not surprising as a perfect competition setting typically assumes that there are a large number of players each having no market power.

\cmtt{
\subsection{Algorithmic scale-up} \label{sec:algorithm_scaleup}
}
\cmtt{
Having demonstrated the fundamental differences between a standard single-player Haverly instance and game-theoretic multi-player Haverly-esque instances, we now demonstrate that Algorithm~\ref{algo:cut_generating_minimum_disequilibrium} is capable of handling larger instances.  
Subsection~\ref{sec:algorithm_scaleup_experimental_setup} outlines our experiments and metrics involving multi-quality instances. 
Subsection~\ref{sec:algorithm_scaleup_MD_performance} showcases the performance of our minimum disequilibrium algorithm with the simple warmstart procedures described in Subsection~\ref{sec:practitioners_method}.
Subsection~\ref{sec:algorithm_scaleup_complementarity_performance} compares our approach against arguably the state-of-the-art heuristic for solving equilibrium problems. 
}

\cmtt{
\subsubsection{Experimental setup with multi-quality instances} \label{sec:algorithm_scaleup_experimental_setup}
}
\cmtt{
Since Baltean-Lugojan \& Misener \cite{baltean2018piecewise} show that standard pooling problems with a ``single quality standard'' can be solved in strongly polynomial time, we consider two additional multi-quality instances -- Adhya1 and Bental5 -- to show how our algorithm scales for settings in which the individual player subproblems cannot be solved in polynomial time when no integer decisions are present. We selected these instances for the following reasons: 
(1) Adhya1\footnote{\url{www.ii.uib.no/~mohammeda/spooling/Adhya1.gms} The single-player instance (subproblem) has $|\Nin|=5$ input nodes, $|\Nblend|=2$ pools, $|\Nout|=4$ output nodes, and $|\mc{K}|=4$ specs. The $\mathbb{PQ}$ formulation has 33 variables, 20 distinct bilinear terms, and 42 linear constraints.} is at least three times larger (in terms of variables and constraints) than the standard Haverly instance. 
(2) Bental5\footnote{\url{www.ii.uib.no/~mohammeda/spooling/Bental5.gms} The single-player instance (subproblem) has $|\Nin|=5$ input nodes, $|\Nblend|=3$ pools, $|\Nout|=5$ output nodes, and $|\mc{K}|=2$ specs. The $\mathbb{PQ}$ formulation has 92 variables, 60 distinct bilinear terms, and 53 linear constraints.} is the largest multi-quality ``standard test instance'' (in terms of variables and constraints) considered in Alfaki and Haugland \cite[Table 1]{alfaki2013strong}.
For each of these instances, we investigate two forms of player behavior (price-taker and Nash-Cournot) and two model variants (nonconvex QCQP, i.e., those without fixed costs, and nonconvex MIQCQP, i.e., those with fixed costs). 
Thus, in total, we consider 8 multi-quality two-player noncooperative game instances. The complementarity model for these 8 instances, as well as the 4 Haverly-esque instances above, are publicly-available on \url{minlp.org}.
}

\begin{table}[h!]													
\centering	
{\color{black}													
\begin{tabular}{crrccccccc}													
\toprule													
	&	\multicolumn{2}{c}{Variable cost}			&	\multicolumn{2}{c}{Fixed cost}	&&& \multicolumn{2}{c}{Inverse demand fnc}			\\
		\cmidrule(lr){2-3}								\cmidrule(lr){4-5}							\cmidrule(lr){8-9}			
$n \in \Nin$	&	$j=1$	&	$2$	&	$1$	&	$2$	&&	$n \in \Nout$	&	$\alpha_n$	&	$\beta_n$	\\
\midrule													
1	&	7	&	6	&	100	&	100	&&	8	&	16	&	0	\\
2	&	3	&	4	&	100	&	100	&&	9	&	32	&	0.28	\\
3	&	2	&	1	&	100	&	100	&&	10	&	15	&	0	\\
4	&	10	&	9	&	100	&	100	&&	11	&	12	&	0.20	\\
5	&	5	&	7	&	100	&	100	&&		&		&		\\
\bottomrule													
\end{tabular}													
\caption{Modifications to the original Adhya1 instance for our two-player game setting. Player 1's variable costs are identical to those given in the original Adhya1 instance.}													
\label{tbl:Adhya1}
} 
\end{table}													

\begin{table}[h!]													
\centering	
{\color{black}													
\begin{tabular}{crrccccccc}													
\toprule													
	&	\multicolumn{2}{c}{Variable cost}			&	\multicolumn{2}{c}{Fixed cost}	&&& \multicolumn{2}{c}{Inverse demand fnc}			\\
		\cmidrule(lr){2-3}								\cmidrule(lr){4-5}							\cmidrule(lr){8-9}			
$n \in \Nin$	&	$j=1$	&	$2$	&	$1$	&	$2$	&&	$n \in \Nout$	&	$\alpha_n$	&	$\beta_n$	\\
\midrule													
1	&	6	&	7	& 200 & 200	&	&	9	&	25	&	0.07	\\
2	&	16	&	14	& 200 & 200	&	&	10	&	23	&	0.04	\\
3	&	15	&	13	& 200 & 200	&	&	11	&	27	&	0.08	\\
4	&	12	&	15	& 200 & 200	&	&	12	&	22	&	0.06	\\
5	&	10	&	10	& 200 & 200	&	&	13	&	17	&	0.03	\\
\bottomrule													
\end{tabular}													
\caption{Modifications to the original Bental5 instance for our two-player game setting. Player 1's variable costs are identical to those given in the original Bental5 instance.}													
\label{tbl:Bental5}	
} 
\end{table}

\cmtt{
To convert the single-player Adhya1 and Bental5 instances into two-player noncooperative games, we assume that both players have the same network structure and parameters as the original single-player instance, but with three important modifications. First, we assume that the second player's variable costs are different from those of the first player (the original values).  Second, we introduce a fixed cost for purchasing any positive amount of a raw material. Third, we replace the fixed prices associated with each output node with independent linear inverse demand functions, just as we did in our Haverly-esque example.  The variable costs, fixed costs, and inverse demand function parameters are listed in Tables~\ref{tbl:Adhya1} and \ref{tbl:Bental5}.   
}

\cmtt{
Before delving into the results, we must first describe our experimental setup. 
Table~\ref{tbl:Table_Key} explains the metrics used to assess algorithmic performance within this setup.
As stated above, each base instance (Adhya1 and Bental5) gives rise to four instances depending on the form of competition (perfect or imperfect) and the presence of binary decisions (QCQP or MIQCQP players).
Since it is customary to warmstart the RMP~\eqref{model:minimum_disequilibrium_relaxed} with some initial solutions (in $\mc{X}_j^L$) to accelerate Algorithm~\ref{algo:cut_generating_minimum_disequilibrium}, especially in larger-scale instances, for each instance (e.g., row in Table~\ref{tbl:Adhya1} or Table~\ref{tbl:Bental5}), we consider at least five random seeds to generate these initial solutions.  These random seeds affect the uniform random variables appearing in warmstart Algorithm~\ref{algo:warmstart_method_Price_Taker} and Algorithm~\ref{algo:warmstart_method_Nash_Cournot_Gauss_Seidel_type}.  
We chose a random initialization to avoid giving the impression that we fine-tuned our warmstart procedure.
Since we will later compare the performance of our minimum disequilibrium algorithm with complementarity-based heuristics, we chose to warmstart with a variant of the Practitioner's algorithm, not the complementarity heuristic. 
Without this warmstart, in early iterations, the RMP yields ``unproductive'' candidate solutions, which in our setting means solutions that are far from being equilibria. The astute reader will recognize that the same behavior occurs in a Benders decomposition algorithm with no initial optimality or feasibility cuts. 
We report the number of warmstart solutions generated in the column ``WS Cuts'' because these solutions are used as cuts in the RMP~\eqref{model:minimum_disequilibrium_relaxed}. We also report the time spent in the warmstart (``WS'' column) phase, solving the RMP (``Master'' column), and subproblems (``Sub'' column) associated with Algorithm~\ref{algo:cut_generating_minimum_disequilibrium}.  To be clear, any individual player subproblem solved during the warmstart phase is only counted in the ``WS'' column, not in the ``Sub'' column.
}

\begin{table}[th!]			
\centering			
{\color{black}
\begin{tabular}{ll}			
\toprule			
Column	&	Explanation	\\
\midrule			
Comp	&	Competition type: PT = Price Taker, NC = Nash-Cournot	\\
Int	&	Does the instance include integer decisions? 0 = No, the subproblems~\eqref{model:agent_problem_Nash} and the RMP~\eqref{model:minimum_disequilibrium_relaxed} \\ 
	& \quad are nonconvex QCQPs; otherwise (1 = Yes), they are nonconvex MIQCQPs	\\
Seed	&	Random seed (an integer) used for warmstart phase; ``NA'' means no warmstart was used 	\\
$\eta^{L}$	&	Lower bound on total disequilibrium (see Algorithm~\ref{algo:cut_generating_minimum_disequilibrium})	\\
$\eta^{U}$	&	Upper bound on total disequilibrium	(see Algorithm~\ref{algo:cut_generating_minimum_disequilibrium}) \\
Iters	&	Total number of major iterations executed in Algorithm~\ref{algo:cut_generating_minimum_disequilibrium}; \\
		& \quad 1 major iteration = 1 pass through the \textbf{while} loop	\\
PBF	&	Phase Best Found: Phase (WS=Warmstart, S=Standard) \\
	& \quad in which best solution (equilibrium) was found 	\\
IBF	&	Iteration Best solution was Found. A positive integer $k$ means the solution was found in the \\
& \quad $k$th major iteration of Algorithm~\ref{algo:cut_generating_minimum_disequilibrium}; 0 implies the solution was found in the warmstart phase 	\\
$p_j^*$	&	Profit found for player $j$ in the ``best'' (closest to equilibrium) solution found	\\
RGAP $\delta_j^*$\%	& Relative disequilibrium gap for player $j$:	$100\% ( p_j^U(\hat{\vpi}) - p_j(\hat{\v{x}}_j,\hat{\vpi}) )/p_j^U(\hat{\vpi})$	\\
	& \quad where $p_j^U(\hat{\vpi})$ is the upper (dual) bound on $p_j^*(\hat{\vpi})$. See also definition \eqref{eq:relative_disequilibrium_gap_definition}  \\
WS Cuts	&	Number of warmstart cuts / solutions generated in $\mc{X}_j^U$ for each player $j$	\\
WS (Time)	&	Time [s] spent solving subproblems in the warmstart phase	\\
Master (Time)	&	Time [s] spent solving the relaxed master problem~\eqref{model:minimum_disequilibrium_relaxed}	\\
Sub (Time)	&	Time [s] spent solving subproblems in Algorithm~\ref{algo:cut_generating_minimum_disequilibrium} Step~\ref{step:solve_subproblems}	\\
Total (Time)	&	Time [s] spent in all phases \\
\bottomrule			
\end{tabular}			
\caption{Explanation of columns in Tables~\ref{tbl:Adhya1_results}, \ref{tbl:Bental5_results}, and \ref{tbl:Complementarity_results}.}			
\label{tbl:Table_Key}			
} 
\end{table}		

\cmtt{
In Tables~\ref{tbl:Adhya1_results}, \ref{tbl:Bental5_results}, and \ref{tbl:Complementarity_results}, we  report the profit associated with the best solution found for each player during the course of the algorithm and its associated relative disequilibrium gap. Here ``best'' is defined as a solution resulting in minimum disequilibrium.  
Since a player's absolute disequilibrium \eqref{eq:disequilibrium_def} is scale dependent,
we prefer to use relative disequilibrium gap
\begin{equation} \label{eq:relative_disequilibrium_gap_definition}
\textrm{RGAP } \delta_j^* = \frac{ p_j^U(\vpi^*) - p_j(\v{x}_j^*,\vpi^*) }{p_j^U(\vpi^*)}
\end{equation}
as a metric to measure the disequilibrium in player $j$'s solution relative to the upper bound $p_j^U(\vpi^*)$ on her optimal profit $p_j^*(\vpi^*)$.
If $(\v{x}_j^*,\vpi^*)$ is an equilibrium, but it is computational expensive to solve the subproblem to global optimality for $p_j^*(\vpi^*)$, then we use the dual bound $p_j^U(\vpi^*)$ instead in the relative disequilibrium gap calculation.  
If the subproblem is solved to provable optimality, then $p_j^U(\vpi^*)=p_j^*(\vpi^*)$ and we recover the standard absolute disequilibrium in the numerator.	
}

\afterpage{%
    \clearpage
    \thispagestyle{empty}
    \begin{landscape}
\begin{table}																									
\centering	
{\color{black}																										
\begin{tabular}{cccrrrcrrrrrrrrrrr}																																			
\toprule																																			
Comp	&	Int	&	Seed	&	$\eta^{LB}$	&	$\eta^{UB}$	&	Iters	&	PBF	&	IBF	&	\multicolumn{2}{c}{$p_j^*$}			&	\multicolumn{2}{c}{RGAP $\delta_j^*$\%}			&	\multicolumn{2}{c}{WS Cuts}			&	\multicolumn{4}{c}{Solution Time [s]}							\\
																\cmidrule(lr){9-10}				\cmidrule(lr){11-12}				\cmidrule(lr){13-14}				\cmidrule(lr){15-18}							
	&		&		&		&		&		&		&		&	1	&	2	&	1	&	2	&	1	&	2	&	WS	&	Master	&	Sub	&	Total	\\
\midrule																																			
PT	&	0	&	1	&	-21.94	&	1.86	&	1	&	S	&	1	&	354.81	&	344.38	&	0.15	&	0.39	&	4	&	5	&	32	&	600	&	45	&	677	\\
PT	&	0	&	2	&	-22.31	&	2.18	&	1	&	S	&	1	&	354.80	&	344.37	&	0.58	&	0.03	&	4	&	4	&	59	&	600	&	52	&	711	\\
PT	&	0	&	3	&	-22.06	&	0.37	&	1	&	S	&	1	&	354.80	&	344.37	&	0.04	&	0.06	&	5	&	3	&	41	&	600	&	51	&	692	\\
PT	&	0	&	4	&	-22.01	&	1.20	&	1	&	S	&	1	&	354.92	&	344.49	&	0.08	&	0.27	&	4	&	5	&	47	&	600	&	53	&	699	\\
PT	&	0	&	5	&	-22.49	&	1.33	&	1	&	S	&	1	&	354.80	&	344.37	&	0.14	&	0.24	&	2	&	3	&	40	&	600	&	57	&	697	\\
PT	&	0	&	NA	&	-28.67	&	2.15	&	10	&	S	&	3	&	354.82	&	344.39	&	0.52	&	0.09	&		&		&		&	1776	&	250	&	2027	\\
\midrule		
PT	&	1	&	1	&	0.00	&	0.00	&	1	&	S	&	1	&	87.50	&	97.50	&	0.00	&	0.00	&	4	&	4	&	7	&	12	&	3	&	22	\\
PT	&	1	&	2	&	0.00	&	0.00	&	1	&	S	&	1	&	87.50	&	97.50	&	0.00	&	0.00	&	4	&	4	&	8	&	13	&	2	&	23	\\
PT	&	1	&	3	&	0.00	&	0.00	&	1	&	S	&	1	&	87.50	&	97.50	&	0.00	&	0.00	&	4	&	4	&	8	&	12	&	2	&	21	\\
PT	&	1	&	4	&	0.00	&	0.00	&	1	&	S	&	1	&	87.50	&	97.50	&	0.00	&	0.00	&	4	&	4	&	7	&	13	&	2	&	22	\\
PT	&	1	&	5	&	0.00	&	0.00	&	1	&	S	&	1	&	87.50	&	97.50	&	0.00	&	0.00	&	4	&	4	&	6	&	10	&	1	&	17	\\
PT	&	1	&	NA	&	0.00	&	0.00	&	3	&	S	&	3	&	87.50	&	97.50	&	0.00	&	0.00	&		&		&		&	25	&	3	&	28	\\
\midrule																											
NC	&	0	&	1	&		&		&	0	&	WS	&	1	&	354.80	&	344.37	&	0.00	&	0.00	&	1	&	1	&	87	&		&		&	87	\\
NC	&	0	&	2	&		&		&	0	&	WS	&	1	&	354.80	&	344.37	&	0.00	&	0.00	&	1	&	1	&	100	&		&		&	100	\\
NC	&	0	&	3	&		&		&	0	&	WS	&	1	&	354.80	&	344.37	&	0.00	&	0.00	&	1	&	1	&	94	&		&		&	94	\\
NC	&	0	&	4	&		&		&	0	&	WS	&	1	&	354.80	&	344.37	&	0.00	&	0.00	&	1	&	1	&	91	&		&		&	91	\\
NC	&	0	&	5	&		&		&	0	&	WS	&	1	&	354.80	&	344.37	&	0.00	&	0.00	&	1	&	1	&	94	&		&		&	94	\\
NC	&	0	&	NA	&	-55.55	&	2.90	&	3	&	S	&	2	&	354.80	&	344.37	&	0.39	&	0.44	&		&		&		&	540	&	208	&	748	\\
\midrule																																			
NC	&	1	&	1	&		&		&	0	&	WS	&	1	&	87.81	&	96.95	&	0.00	&	0.00	&	1	&	1	&	1	&		&		&	1	\\
NC	&	1	&	2	&		&		&	0	&	WS	&	1	&	87.81	&	96.96	&	0.00	&	0.00	&	1	&	1	&	1	&		&		&	1	\\
NC	&	1	&	3	&		&		&	0	&	WS	&	1	&	87.81	&	96.95	&	0.00	&	0.00	&	1	&	1	&	1	&		&		&	1	\\
NC	&	1	&	4	&		&		&	0	&	WS	&	1	&	87.81	&	96.95	&	0.00	&	0.00	&	1	&	1	&	1	&		&		&	1	\\
NC	&	1	&	5	&		&		&	0	&	WS	&	1	&	87.81	&	96.95	&	0.00	&	0.00	&	1	&	1	&	1	&		&		&	1	\\
NC	&	1	&	NA	&	0.00	&	0.00	&	10	&	S	&	10	&	88.05	&	96.39	&	0.00	&	0.00	&		&		&		&	349	&	7	&	356	\\
\bottomrule																		
\end{tabular}	
\caption{Adhya1-based two-player game results using our minimum disequilibrium algorithm with a naive practitioner's method for warmstart solutions. See Table~\ref{tbl:Table_Key} for an explanation of the columns.}			
\label{tbl:Adhya1_results}	
} 
\end{table}																									
\end{landscape}
    \clearpage
}

\afterpage{%
    \clearpage
    \thispagestyle{empty}
    \begin{landscape}
\begin{table}										
\centering		
{\color{black}																							
\begin{tabular}{cccrrrcrrrrrrrrrrr}																			
\toprule																									
Comp	&	Int	&	Seed	&	$\eta^{LB}$	&	$\eta^{UB}$	&	Iters	&	PBF	&	IBF	&	\multicolumn{2}{c}{$p_j^*$}			&	\multicolumn{2}{c}{RGAP $\delta_j^*$\%}			&	\multicolumn{2}{c}{WS Cuts}			&	\multicolumn{4}{c}{Solution Time [s]}							\\
																\cmidrule(lr){9-10}				\cmidrule(lr){11-12}				\cmidrule(lr){13-14}				\cmidrule(lr){15-18}						
	&		&		&		&		&		&		&		&	1	&	2	&	1	&	2	&	1	&	2	&	WS	&	Master	&	Sub	&	Total	\\
\midrule																																			
PT	&	0	&	1	&	-35.00	&	9.54	&	1	&	S	&	1	&	600.00	&	475.00	&	0.00	&	1.97	&	39	&	38	&	186	&	30	&	120	&	336	\\
PT	&	0	&	2	&	-34.80	&	9.07	&	1	&	S	&	1	&	600.00	&	475.00	&	0.00	&	1.87	&	45	&	36	&	178	&	30	&	120	&	328	\\
PT	&	0	&	3	&	-33.54	&	7.95	&	1	&	S	&	1	&	600.00	&	475.00	&	0.00	&	1.65	&	37	&	36	&	158	&	30	&	120	&	308	\\
PT	&	0	&	4	&	-35.00	&	8.34	&	1	&	S	&	1	&	600.00	&	475.00	&	0.00	&	1.73	&	36	&	36	&	195	&	30	&	120	&	345	\\
PT	&	0	&	5	&	-35.00	&	9.51	&	1	&	S	&	1	&	600.00	&	475.00	&	0.00	&	1.96	&	39	&	36	&	170	&	30	&	120	&	320	\\
PT	&	0	&	NA	&	-34.29	&	14.50	&	5	&	S	&	4	&	600.00	&	475.00	&	0.00	&	2.96	&		&		&		&	121	&	150	&	271	\\
\midrule																			
PT	&	1	&	1	&	9.29	&	9.33	&	1	&	S	&	1	&	275.00	&	0.00	&	0.00	&	0/0	&	17	&	19	&	30	&	0	&	4	&	34	\\
PT	&	1	&	2	&	9.29	&	9.33	&	1	&	S	&	1	&	275.00	&	0.00	&	0.00	&	0/0	&	18	&	19	&	30	&	0	&	3	&	33	\\
PT	&	1	&	3	&	3.13	&	9.33	&	1	&	S	&	1	&	275.00	&	0.00	&	0.00	&	0/0	&	20	&	17	&	30	&	0	&	3	&	34	\\
PT	&	1	&	4	&	9.27	&	9.33	&	1	&	S	&	1	&	275.00	&	0.00	&	0.00	&	0/0	&	18	&	20	&	30	&	0	&	3	&	34	\\
PT	&	1	&	5	&	9.26	&	9.33	&	2	&	S	&	1	&	275.00	&	0.00	&	0.00	&	0/0	&	15	&	16	&	60	&	0	&	3	&	63	\\
PT	&	1	&	NA	&	0.47	&	9.33	&	4	&	S	&	4	&	275.00	&	0.00	&	0.00	&	0/0	&		&		&		&	770	&	1	&	771	\\
\midrule																		
NC	&	0	&	1	&	-310.69	&	14.02	&	1	&	WS	&	10	&	1315.10	&	1764.19	&	0.00	&	0.79	&	10	&	10	&	203	&	30	&	10	&	243	\\
NC	&	0	&	2	&	-423.00	&	13.02	&	1	&	WS	&	1	&	1564.99	&	1643.14	&	0.00	&	0.79	&	10	&	10	&	202	&	30	&	10	&	243	\\
NC	&	0	&	3	&	-475.94	&	8.75	&	1	&	WS	&	1	&	1264.83	&	1794.99	&	0.00	&	0.49	&	10	&	10	&	603	&	30	&	10	&	643	\\
NC	&	0	&	4	&	-126.80	&	4.47	&	5	&	WS	&	6	&	1104.12	&	1890.38	&	0.00	&	0.23	&	10	&	10	&	3604	&	3000	&	902	&	7506	\\
NC	&	0	&	5	&	-120.00	&	4.59	&	5	&	WS	&	3	&	1104.09	&	1890.39	&	0.00	&	0.24	&	10	&	9	&	3605	&	3000	&	901	&	7506	\\
NC	&	0	&	6	&	-111.16	&	4.67	&	5	&	WS	&	9	&	1108.76	&	1887.17	&	0.01	&	0.24	&	9	&	9	&	3603	&	3000	&	902	&	7506	\\
NC	&	0	&	7	&	-127.13	&	4.90	&	5	&	WS	&	6	&	1176.82	&	1844.51	&	0.00	&	0.26	&	10	&	10	&	3604	&	3000	&	903	&	7507	\\
NC	&	0	&	8	&	-128.21	&	4.76	&	5	&	WS	&	7	&	1349.77	&	1743.59	&	0.00	&	0.27	&	10	&	10	&	3603	&	3000	&	902	&	7504	\\
NC	&	0	&	NA	&	-110.79	&	26.99	&	10	&	S	&	10	&	1738.71	&	1565.84	&	0.68	&	0.96	&		&		&		&	3000	&	904	&	3904	\\
\midrule																							
NC	&	1	&	1	&		&		&	0	&	WS	&	1	&	704.12	&	1356.79	&	0.00	&	0.00	&	1	&	1	&	5	&		&		&	5	\\
NC	&	1	&	2	&		&		&	0	&	WS	&	1	&	1027.45	&	1185.32	&	0.00	&	0.00	&	1	&	1	&	5	&		&		&	5	\\
NC	&	1	&	3	&		&		&	0	&	WS	&	1	&	857.18	&	1269.58	&	0.00	&	0.00	&	1	&	1	&	5	&		&		&	5	\\
NC	&	1	&	4	&		&		&	0	&	WS	&	1	&	1190.47	&	1098.24	&	0.00	&	0.00	&	1	&	1	&	5	&		&		&	5	\\
NC	&	1	&	5	&		&		&	0	&	WS	&	1	&	1039.65	&	1163.62	&	0.00	&	0.00	&	1	&	1	&	5	&		&		&	5	\\
NC	&	1	&	NA	&	-338.53	&	5.33	&	20	&	S	&	20	&	1286.81	&	1074.85	&	0.19	&	0.27	&		&		&		&	183	&	12	&	195	\\
NC	&	1	&	NA	&	-143.03	&	1.10	&	20	&	S	&	14	&	1408.71	&	970.24	&	0.05	&	0.04	&		&		&		&	1083	&	13	&	1096	\\
\bottomrule																					
\end{tabular}																								
\caption{Bental5-based two-player game results using our minimum disequilibrium algorithm with a naive practitioner's method for warmstart solutions. See Table~\ref{tbl:Table_Key} for an explanation of the columns.}				
\label{tbl:Bental5_results}			
} 
\end{table}																				
\end{landscape}
    \clearpage
}
\cmtt{
\subsubsection{Minimum disequilibrium algorithm performance} \label{sec:algorithm_scaleup_MD_performance}
}
\cmtt{
Tables~\ref{tbl:Adhya1_results} and \ref{tbl:Bental5_results} show the performance of our minimum disequilibrium method (Algorithm~\ref{algo:cut_generating_minimum_disequilibrium}) with warmstart Algorithm~\ref{algo:warmstart_method_Price_Taker} (in the price-taker setting),  Algorithm~\ref{algo:warmstart_method_Nash_Cournot_Gauss_Seidel_type} (in the Nash-Cournot setting),
and no warmstart whatsoever. 
We discuss these results in the paragraphs that follow.
Several take-aways are noteworthy.  
First, our minimum disequilibrium algorithm once again certifies that no equilibrium exists in the price-taker MIQCQP setting.
Second, QCQP subproblems require more computational time to converge and sometimes reach the time limit allotted without fully converging to optimality. We believe that a stronger pooling formulation could remedy this slow convergence.
Third, the single-iteration warmstart methods accelerate the time to find an equilibrium relative to a ``pure'' minimum disequilibrium algorithm with no warmstarts.  
}

\cmtt{
\textbf{Price-taker QCQP setting}. 
For the Adhya1-based instance, Algorithm~\ref{algo:cut_generating_minimum_disequilibrium} finds an equilibrium in one iteration given three of the five random seeds (seeds 2, 3, and 5).  The reason why the RGAP values are positive is because the RMP could not be solved to global optimality in 600 seconds (the time limit that we used throughout for the RMP) and thus the term $p_j(\v{x}_j^*,\vpi^*)$ appearing in the relative disequilibrium gap definition (Equation~\eqref{eq:relative_disequilibrium_gap_definition}) was imprecise.  Table~\ref{tbl:Complementarity_results} confirms that this is an equilibrium.
Similarly, for the Bental5-based instance, Algorithm~\ref{algo:cut_generating_minimum_disequilibrium} finds an equilibrium in the first major iteration for all random seeds. In contrast to the Adhya1-based instance, the vast majority of the solution time is spent proving optimality in player 2's subproblem, not in solving the RMP.  In particular, an equilibrium (optimal primal solution) is found within seconds, while the remaining time is spent computing $p_j^U(\vpi^*)$.  But because Gurobi 9.5 could not solve player 2's subproblem in the time allotted, the algorithm returned $p_j^U(\vpi^*) > p_j^*(\vpi^*) = p_j(\v{x}_j^*,\vpi^*)$, leading to an RGAP $\delta_j^*\%$ value above 1\%.  We believe that this could be improved by using a stronger pooling formulation. Recall that our implementation relies on the $\mathbb{P}$ formulation, which is known to furnish an inferior relaxation relative to the $\mathbb{PQ}$ formulation.
}

\cmtt{
\textbf{Price-taker MIQCQP setting}. In all ``Bental5 PT 1'' rows, the minimum disequilibrium algorithm terminates with a lower bound $\eta^{LB} > 0$ signifying that no equilibrium exists.  Random seed 5 required two major iterations of Algorithm~\ref{algo:cut_generating_minimum_disequilibrium}, while all other seeds required one iteration. Note that the upper and lower bounds did not yet match, but this is permissible since the algorithm can terminate as soon as the lower bound $\eta^{LB}$ exceeds 0.
Table~\ref{tbl:Complementarity_results} shows that the complementarity heuristic returned a solution and could not identify that no equilibrium exists, just as we encountered in the Haverly-esque price-taker MIQCQP instance. Meanwhile, for the Adhya1-based instance, Algorithm~\ref{algo:cut_generating_minimum_disequilibrium} finds an equilibrium and proves that it is an equilibrium in the first major iteration for all random seeds in roughly 20 seconds. 
}

\cmtt{
\textbf{Nash-Cournot QCQP setting}. Algorithm~\ref{algo:warmstart_method_Nash_Cournot_Gauss_Seidel_type} succeeded in finding an equilibrium on the first try for the Adhya1-based instance, while an equilibrium could not be proven in the Bental5-based instance.  For the latter, we report results corresponding to eight different random seeds to demonstrate how longer solve times yield a better (smaller) lower bound $\eta^{LB}$, a lower upper bound $\eta^{UB}$, and lower $\textrm{RGAP } \delta_j^*$ values.  
}

\cmtt{
\textbf{Nash-Cournot MIQCQP setting}. For both the Adhya1- and Bental5-based Nash-Cournot MIQCQP instances, the warmstart heuristic (Algorithm~\ref{algo:warmstart_method_Nash_Cournot_Gauss_Seidel_type}) was able to immediately find an equilibrium, which is apparent in the solution time and the fact that only one ``WS Cut'' was generated per player. 
For the Bental5-based instance, the $p_j^*$ columns reveal that there are multiple equilibria for this instance as each random seed leads to a distinct equilibrium. 
We also show results associated with a ``pure'' version of Algorithm~\ref{algo:cut_generating_minimum_disequilibrium} (``pure'' means no warmstart phase was applied, hence $\mc{X}_j^L = \emptyset$ for $j \in \mc{J}$). Without initial solutions, the algorithm is completely dependent on the RMP for generating candidate equilibria. 
The last row of Table~\ref{tbl:Adhya1_results} shows how a ``pure'' version of Algorithm~\ref{algo:cut_generating_minimum_disequilibrium} requires a much longer solve time to generate a near equilibrium. Interestingly, the ``pure'' Algorithm~\ref{algo:cut_generating_minimum_disequilibrium} implementation finds a different equilibrium than the practitioner's method (Algorithm~\ref{algo:warmstart_method_Nash_Cournot_Gauss_Seidel_type}). 
The last two rows of Table~\ref{tbl:Bental5_results} show once again that a ``pure'' version of Algorithm~\ref{algo:cut_generating_minimum_disequilibrium}, given a maximum limit of 20 major iterations, requires longer solve times. In the last row, the RMP is solved to optimality in each iteration, whereas in the second to last row, the RMP is given a time limit 10 seconds in each iteration.  
}

\cmtt{
\subsubsection{Complementarity heuristic performance} \label{sec:algorithm_scaleup_complementarity_performance}
}
\cmtt{
It is instructive to compare our minimum disequilibrium algorithm against the complementarity heuristic.
Table~\ref{tbl:Complementarity_results} shows the results of the complementarity heuristic under a variety of parameter settings.
Because the complementarity heuristic is not guaranteed to return an equilibrium, we must add a verification post-processing step to confirm that the returned solution is, indeed, an equilibrium.  Recall that the minimum disequilibrium algorithm includes this verification and the solution times reported reflect this time.
In the ``Solver(s)'' column, ``CONOPT 4.1'' means that CONOPT 4.1 was invoked with an initial solution $(\v{x}_1,\v{x}_2)=(\v{0},\v{0})$ and then terminated with a locally optimal solution. In all instances, CONOPT 4.1 terminates in under 0.1 seconds.
``Gurobi 9.5'' means that Gurobi 9.5 was used as an exact solver for the time appearing in the column ``Comp Model.''
``Gurobi 9.5 $\rightarrow$ CONOPT 4.1'' means that Gurobi 9.5 was invoked for the time appearing in the column ``Comp Model'' to generate an initial primal solution to the complementarity model (Model~\eqref{model:max_social_welfare_projected} for the price-taker setting and Model~\eqref{model:monolithic_model_for_nonconvex_example}, possibly with integer decisions, for the Nash-Cournot setting), which was then passed to the local solver CONOPT 4.1 to identify a local maximum.  For example, the second row shows that Gurobi 9.5 was given 5 seconds to generate an initial solution to the complementarity model, which was then polished by CONOPT 4.1.  
Note that we could not use CONOPT 4.1 in the MIQCQP setting because it cannot handle integer decision variables.
Finally, since multiple equilibria may exist, one must be careful comparing different methods.
}

\cmtt{
We first comment on several general trends that are apparent over many complementarity instances.
QCQP instances are more challenging than MIQCQP instances either because it is difficult to find an equilibrium (i.e., solve the QCQP complementarity model) or to verify that the solution returned is, in fact, an equilibrium (i.e., solve the QCQP subproblem to global optimality).  
Whenever CONOPT 4.1 was initialized with the zero solution $(\v{x}_1,\v{x}_2)=(\v{0},\v{0})$, it failed to identify an equilibrium and, in fact, fared quite poorly.  This observation re-iterates our assertion that local optimality of a complementarity model does not guarantee that an equilibrium has been found.  Gurobi 9.5 was able to solve all MIQCQP complementarity models quickly in under 20 seconds.  Other differences across the complementarity instances are highlighted below. 
}

\afterpage{%
    \clearpage
    \thispagestyle{empty}
    \begin{landscape}
\begin{table}																							
\centering	
{\color{black}																						
\begin{tabular}{ccclrrrrrrrr}																						
\toprule																							
Base Instance	&	Comp	&	Int	&	Solver(s)	&	\multicolumn{2}{c}{$p_j^*$}			&	\multicolumn{2}{c}{RGAP $\delta_j^*$\%}			&		\multicolumn{4}{c}{Solution Time [s]}					\\
								\cmidrule(lr){5-6}				\cmidrule(lr){7-8}						\cmidrule(lr){9-12}					
	&		&		&		&	1	&	2	&	1	&	2	&	Comp Model	&	1	&	2	&	Total	\\
\midrule																							
Adhya1	&	PT	&	0	&	CONOPT 4.1	&	0.00	&	0.00	&	100.00	&	100.00	&	0	&	2	&	1	&	2	\\
Adhya1	&	PT	&	0	&	Gurobi 9.5 $\rightarrow$ CONOPT 4.1	&	354.80	&	342.92	&	0.00	&	0.42	&	5	&	5	&	1	&	12	\\
Adhya1	&	PT	&	0	&	Gurobi 9.5	&	354.66	&	344.28	&	0.05	&	0.03	&	600	&	5	&	3	&	608	\\
Adhya1	&	PT	&	0	&	Gurobi 9.5 $\rightarrow$ CONOPT 4.1	&	354.80	&	344.37	&	0.00	&	0.00	&	600	&	5	&	1	&	607	\\
Adhya1	&	PT	&	1	&	Gurobi 9.5	&	87.50	&	97.50	&	0.00	&	0.00	&	5	&	3	&	0	&	8	\\
\\
Adhya1	&	NC	&	0	&	CONOPT 4.1	&	0.00	&	0.00	&	100.00	&	100.00	&	0	&	5	&	5	&	10	\\
Adhya1	&	NC	&	0	&	Gurobi 9.5	&	354.80	&	342.92	&	0.01	&	0.42	&	100	&	40	&	8	&	148	\\
Adhya1	&	NC	&	0	&	Gurobi 9.5 $\rightarrow$ CONOPT 4.1	&	354.80	&	342.92	&	0.05	&	0.42	&	600	&	60	&	20	&	680	\\
Adhya1	&	NC	&	1	&	Gurobi 9.5	&	88.06	&	96.39	&	0.00	&	0.00	&	16	&	0	&	0	&	16	\\
\midrule																							
Bental5	&	PT	&	0	&	CONOPT 4.1	&	100.00	&	100.00	&	98.31	&	98.28	&	0	&	0	&	5	&	5	\\
Bental5	&	PT	&	0	&	Gurobi 9.5	&	600.00	&	475.00	&	0.00	&	1.22	&	30	&	0	&	300	&	330	\\
Bental5	&	PT	&	1	&	Gurobi 9.5	&	200.00	&	-75.00	&	0.00	&	0/0	&	6	&	0	&	0	&	6	\\
\\
Bental5	&	NC	&	0	&	CONOPT 4.1	&	401.39	&	401.39	&	87.29	&	86.92	&	0	&	0	&	30	&	30	\\
Bental5	&	NC	&	0	&	Gurobi 9.5  &	1736.75	&	1546.40	&	0.00	&	0.52	&	5	&	1	&	30	&	36	\\
Bental5	&	NC	&	0	&	Gurobi 9.5 $\rightarrow$ CONOPT 4.1	&	1736.75	&	1546.40	&	0.00	&	0.58	&	5	&	1	&	30	&	36	\\
Bental5	&	NC	&	0	&	Gurobi 9.5 $\rightarrow$ CONOPT 4.1	&	1736.75	&	1546.40	&	0.00	&	0.43	&	5	&	1	&	180	&	186	\\
Bental5	&	NC	&	1	&	Gurobi 9.5	&	1374.11	&	1001.46	&	0.00	&	0.00	&	4	&	0	&	0	&	5	\\
\bottomrule																							
\end{tabular}																							
\caption{Complementarity heuristic results. ``CONOPT 4.1" = CONOPT 4.1 was invoked with an initial solution $(\v{x}_1,\v{x}_2)=(\v{0},\v{0})$.  This approach performed poorly in all cases.  ``Gurobi 9.5 $\rightarrow$ CONOPT 4.1" = Gurobi 9.5 was invoked with the time limit shown under ``Comp Model" to find an initial solution, which was then passed as an initial solution to CONOPT 4.1. ``Comp Model'' = Model~\eqref{model:max_social_welfare_projected} in the price-taker setting and Model~\eqref{model:relaxed_monolithic_NC} in the Nash-Cournot setting. The subheadings ``1'' and ``2'' under ``Solution Time [s]'' refer to the time spent solving player 1 and 2's subproblem, respectively, to confirm that an equilibrium has been found. All other column headers are explained in Table~\ref{tbl:Table_Key}. Note that ``Bental5 PT 1'' does \textit{not} return an equilibrium.}
\label{tbl:Complementarity_results}	
} 
\end{table}																						
\end{landscape}
    \clearpage
}																														

\cmtt{
\textbf{QCQP setting}. As mentioned above, the QCQP instances are challenging.
For the Adhya1-based instances, the difficulty is in finding an equilibrium (i.e., solving the complementarity model) as can be seen by noting (see Table~\ref{tbl:Adhya1_results}) that an equilibrium to both the price-taker and Nash-Cournot QCQP instances is when player 1 and 2 have a profit of 354.80 and 344.37, respectively.
In the price-taker setting (``Adhya1 PT 0''), three rows are shown with Gurobi 9.5 to demonstrate the performance of an exact solver. 
In the second row of Table~\ref{tbl:Complementarity_results}, using Gurobi 9.5 alone for only 5 seconds and then passing this solution to CONOPT 4.1 for polishing did not produce an equilibrium. This result re-iterates that a local solver is not guaranteed to find an equilibrium even if it is given an arguably better initial solution than the zero solution. 
In the third and fourth rows of Table~\ref{tbl:Complementarity_results}, we see that using Gurobi 9.5 alone for 600 seconds returned a near equilibrium, while using Gurobi 9.5 for 600 seconds and then polishing this solution with CONOPT 4.1 found a true equilibrium.  This equilibrium was confirmed in 6 seconds by having Gurobi 9.5 solve each subproblem to global optimality to corroborate that no player has an incentive to deviate.   
In the Nash-Cournot QCQP setting (``Adhya1 NC 0''), the exact solver Gurobi 9.5 returns an approximate equilibrium in 100 seconds (roughly the time required by our modified Practitioner's heuristic (Algorithm~\ref{algo:warmstart_method_Nash_Cournot_Gauss_Seidel_type})), but player 2's profit of 342.921 corresponds to a 0.42\% relative disequilibrium.
This is because Gurobi could not yet find a better primal solution in 100 seconds.
Even with a longer solve time for Gurobi (600 seconds) and polishing the solution with CONOPT, an equilibrium could not be found.
In contrast, for the Bental5-based instances, Gurobi 9.5 found an equilibrium in under 30 seconds in the price-taker setting and within 5 seconds in the Nash-Cournot setting (while attempting to \emph{verify} equilibrium dominated the total computational time). Note that the same difficulty that the minimum disequilibrium algorithm encountered when solving the subproblems to global optimality (see the third set of rows in Table~\ref{tbl:Bental5_results}) occurred here as well. Gurobi could not solve the $\mathbb{P}$ formulations to global optimality in three minutes.  
}

\cmtt{
\textbf{MIQCQP setting}. In the MIQCQP setting (``Adhya1 PT 1'' and ``Adhya1 NC 1''), the complementarity approach requires roughly 8 and 16 seconds, respectively, to find and confirm an equilibrium.
In contrast our minimum disequilibrium algorithm takes roughly 21 and 1 second(s), respectively, to find and confirm an equilibrium.  Admittedly, in the latter case, the modified Practitioner's heuristic (Algorithm~\ref{algo:warmstart_method_Nash_Cournot_Gauss_Seidel_type}) is responsible for the fast solve time.
On the other hand, our ``pure'' minimum disequilibrium method (Algorithm~\ref{algo:cut_generating_minimum_disequilibrium}), where ``pure'' means that no warmstart procedure was used (hence, $\mc{X}_j^L = \emptyset$ for all $j \in \mc{J}$), cannot assert that an equilibrium has been found for ``Adhya1 NC 1'' in less than 356 seconds.  
For the Bental5-based instances, the most salient observation is that the complementarity heuristic falsely identifies a solution as an equilibrium in the ``Bental5 PT 1'' row.
Specifically, we see that player 2's purported optimal profit is negative (-75).  
The complementarity heuristic has no difficulty with ``Bental5 NC 1'' as it solves in 5 seconds.
}

\cmtt{
In summary, if we compare the best performing version of each algorithm in terms of solution time, RGAP, and ability to find an equilibrium, by inspecting Tables~\ref{tbl:Adhya1_results}, \ref{tbl:Bental5_results}, and \ref{tbl:Complementarity_results}, we see that the minimum disequilibrium algorithm outperforms the complementarity heuristic on the Adhya1-based Nash-Cournot instances, while the opposite is true for the Adhya1-based price-taker instances. The complementarity approach also appears slightly better for the Bental5-based QCQP instances as the RGAPs appear lower in the time allotted.  Both methods are equally fast on the Bental5-based Nash-Cournot MIQCQP instance. Finally, the complementarity approach fails on the Bental5-based price-taker MIQCQP instance, while the minimum disequilibrium algorithm successfully identifies the non-existence of an equilibrium.  
If we look at inferior versions of each algorithm, then both can struggle.  The pure minimum disequilibrium algorithm with no warmstarts can take a long time to converge to a near equilibrium, while the complementarity heuristic, if given the wrong starting solution, can return low-quality solutions when solved to local optimality. Not surprisingly, there is no silver bullet, but there is clear empirical evidence that our minimum disequilibrium offers notable benefits in certain settings.
}

\section{Conclusions and Future Work} \label{sec:conclusions}

This work introduced ``competitive'' pooling problems as a paradigm for studying the interaction of multiple non-cooperative nonlinear players each seeking to maximize individual profit. Such problems are emblematic of many nonconvex process systems engineering-related problems in which multiple agents vie with one another.  Rigorously handling these nonconvexities has, to date, been quite challenging and has likely dissuaded researchers and practitioners from pursuing such problems.  In addition to showing the potential of complementarity-based heuristics, we applied an exact decomposition algorithm aimed at minimizing disequilibrium to solve these problems to provable global optimality.  Unlike existing methods, our decomposition algorithm produces a ``certificate of non-existence'' when no equilibrium exists. We believe the algorithm holds promise for tackling related problems in the optimization and process systems engineering community. 

\cmtt{
Algorithmically, several concluding remarks are in order. 
First, in the price-taker setting when no equilibrium exists, we observed that the complementarity approach was rather susceptible to fail (i.e., return a candidate equilibrium) in both the QCQP and MIQCQP settings. 
Second, our ``pure'' minimum disequilibrium method (Algorithm~\ref{algo:cut_generating_minimum_disequilibrium} without any warmstart solutions, i.e., $\mc{X}_j^L = \emptyset$ for all players $j \in \mc{J}$) was significantly slower than the practitioner's heuristic (Algorithm~\ref{algo:practitioners_method_Jacobi_type}) and complementarity-based heuristics (solving Model~\eqref{model:max_social_welfare_projected} or Model~\eqref{model:relaxed_monolithic_NC}) at finding equilibria.  In other words, our results corroborate what is already known -- that the latter two methods are good heuristics -- and demonstrate that our ``pure'' minimum disequilibrium method is inferior at finding good primal solutions. 
Third, since our minimum disequilibrium algorithm is an effective ``dual'' approach in that it provides equilibrium guarantees, it is our opinion that hybrid methods, e.g., algorithms incorporating good primal heuristics within our minimum disequilibrium framework, offer the community powerful and rigorous tools for solving nonconvex games. Coupling our approach with a single-iteration practitioner's heuristic proved to be competitive with existing methods. One could certainly explore a multi-iteration practitioner's heuristic or coupling a complementarity-based heuristic with our algorithm as well. 
}

There are numerous opportunities for additional research.  
First, whereas we only looked into static problems, many problems of practical interest require dynamic (time-indexed) settings in which decisions are connected over time. 
Second, consideration of stochastic parameters would open the doors to investigating equilibrium under uncertainty. Stochastic settings in which players optimize an expected profit function subject to global scenarios is already done with convex players, but is not well understood when nonconvexities are present. 
Third, it would be interesting to consider correlated and/or nonlinear inverse demand functions.  This variant would likely be even more complicated for complementarity-based approaches to find optimal solutions; we believe that our decomposition algorithms may be more promising in this setting. 
Fourth, treating a generalized Nash setting, in which the upper-level decision variables appear in the lower-level constraints, is also on our wish list for further study.  Generalized Nash instances arise when an individual player's feasible region (not just her objective function) is affected by other players' decisions.
Fifth, it is possible for multiple equilibria to exist and a tractable approach to identify all of them is not yet known.
Sixth, whereas we represented all player behavior as either ``price taker'' or ``price maker,'' it is also possible to model mixtures of player behavior with some players acting as price takers and other acting as price makers. 
Finally, there are other forms of equilibrium besides a Nash equilibrium.  Pursuing these other forms may lead to new, or at least different insights concerning the strategic behavior of multiple interacting players.      

\cmtt{
\section{Acknowledgments} 
}
\cmtt{
We thank Youngdae Kim for an insightful suggestion on how to best demonstrate when a complementarity approach may fail. We thank an astute referee for helping us showcase the generality of our decomposition framework on challenging multi-quality pooling instances. 
}

\small
\bibliographystyle{abbrv}
\bibliography{gma_refs,lotero_refs,gsip}

\appendix

\section{Perfect Competition: Alternative derivation of maximum social welfare model via complementarity}
\label{app:msw_via_comp}

This section provides a standalone derivation of the maximum social welfare model \eqref{model:max_social_welfare} using a complementarity framework. This derivation allows one to interpret the maximum social welfare model \eqref{model:max_social_welfare} through the lens of complementarity, which may offer interesting connections in its own right.

In the perfect competition setting, player $j$'s objective function~\eqref{objfnc:price_taker_indep} can be represented more generally and concisely as 
\begin{equation} \label{objfnc:perfect_comp_indep_linear_x}
p^{\textrm{SupplierPC}}_j(\v{x}_j,\vpi) = 
\sum_{r \in \mc{R}} \pi_r x_{jr} - \v{C}_j^{\top} \v{x}_j  
\end{equation}
where the set $\mc{R}$ indexes the output streams where revenue is earned, i.e., $\mc{R} = \Nout$, and $\v{C}_j$ denotes a vector of cost parameters.
This leads to a compact representation of player $j$'s \textit{relaxed} optimization problem, parameterized by the price vector $\vpi$:
\begin{equation} \label{model:relaxed_individual_player_PC}
\max_{\v{x}_j \in \mc{X}^C_j}~ \sum_{r \in \mc{R}} \pi_r x_{jr} - \v{C}_j^{\top} \v{x}_j 
\end{equation}
Assuming a set of linear inverse demand functions, the consumer attempts to maximize consumer surplus by solving 
\begin{equation} \label{model:market_player_PC_linear_demand}
\max_{\v{q}}~ p^{\textrm{ConsumerPC}}_j(\v{q}) = \sum_{r \in \mc{R}} \alpha_r q_r - \tfrac{1}{2}\beta_r q_r^2 - \pi_r q_r. 
\end{equation}
Finally, we need to enforce that supply and demand are equal
\begin{equation} \label{eq:supply_demand_balance}
q_r = \sum_{j \in \mc{J}} x_{jr} \qquad \forall r \in \mc{R}.
\end{equation} 

\begin{proposition}
The KKT conditions of the continuous relaxation of Model~\eqref{model:max_social_welfare}
are identical to the complementarity system obtained from aggregating (i) the KKT conditions of Model~\eqref{model:relaxed_individual_player_PC} for all players $j \in \mc{J}$, 
(ii) the KKT conditions of Model~\eqref{model:market_player_PC_linear_demand},
and (iii) supply-demand balance equations~\eqref{eq:supply_demand_balance}. 
\end{proposition}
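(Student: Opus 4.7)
The plan is to compute the KKT conditions of the continuous relaxation of Model~\eqref{model:max_social_welfare} directly and show, line by line, that they split cleanly into the three aggregated systems. I would associate a (free) dual multiplier $\pi_n$ with each coupling constraint~\eqref{eq:supply_demand_balance}, and use generic multipliers $(\vlambda_j,\vmu_j,\vnu_j)$ for the constraints defining each $\mc{X}_j^C$ (these are exactly the multipliers that already appear in \eqref{model:generic_nlp_kkt}). The key algebraic observation is that the coupling constraint contributes $-\sum_n \pi_n(q_n - \sum_j f_{jn}^{\OUT})$ to the Lagrangian, so the cross-term $\pi_n \sum_j f_{jn}^{\OUT}$ can be distributed among the players, giving
\begin{equation*}
L \;=\; \underbrace{\sum_{n\in\Nout}\bigl(\alpha_n q_n - \tfrac{1}{2}\beta_n q_n^2 - \pi_n q_n\bigr)}_{=\,p^{\textrm{ConsumerPC}}(\v{q})} \;+\; \sum_{j\in\Players}\underbrace{\Bigl(\sum_{n\in\Nout_j}\pi_n f_{jn}^{\OUT} - \v{C}_j^\top \v{x}_j\Bigr)}_{=\,p_j^{\textrm{SupplierPC}}(\v{x}_j,\vpi)} \;+\; \text{(constraint terms for each } \mc{X}_j^C\text{)}.
\end{equation*}

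With this decomposition in hand, I would match the three blocks of KKT conditions. Stationarity in $q_n$, combined with $q_n \geq 0$ and its dual, produces $\alpha_n - \beta_n q_n - \pi_n \leq 0 \perp q_n \geq 0$, which is precisely the KKT system of the consumer problem~\eqref{model:market_player_PC_linear_demand}. Since $\mc{X}_j^C$ is defined entirely in terms of $\v{x}_j$ and involves no coupling across players or with $\v{q}$, the stationarity conditions in $\v{x}_j$ decouple by player and reproduce, term by term, the KKT conditions~\eqref{model:generic_nlp_kkt} for the subproblem~\eqref{model:relaxed_individual_player_PC} with $\vpi$ treated as an exogenous parameter; the only way $\pi_n$ enters player $j$'s stationarity is linearly through $f_{jn}^{\OUT}$, exactly as in~\eqref{objfnc:perfect_comp_indep_linear_x}. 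Primal feasibility of the coupling constraint in the social welfare problem is the supply-demand balance~\eqref{eq:supply_demand_balance}. The converse direction is an immediate multiplier assembly: given a solution of the aggregated system, using the same $\pi_n$ and the same $(\vlambda_j,\vmu_j,\vnu_j)$ yields a KKT point of the social welfare problem.

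The main obstacle is essentially bookkeeping rather than analysis: one must verify that every primal/dual feasibility relation and every complementary slackness condition on the social welfare side has a unique counterpart in exactly one of the three aggregated pieces, and that no spurious condition is introduced in either direction. This is clean because $\mc{X}_j^C$ couples neither players' variables to one another nor to $\v{q}$, so the only interaction in the Lagrangian is through the scalar multipliers $\pi_n$. It is also worth flagging that no convexity is invoked in establishing the equivalence of the two KKT systems themselves; nonconvexity merely renders this common system necessary but not sufficient, consistent with the cautionary remarks in Section~\ref{sec:fallacy}.
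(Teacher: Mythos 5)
Your proposal is correct and follows essentially the same route as the paper's own proof in Appendix~\ref{app:msw_via_comp}: introduce $\pi_r$ as the multiplier on the supply--demand balance, observe that the Lagrangian of the relaxed welfare problem separates into the consumer surplus term plus each supplier's Lagrangian, and match the stationarity, complementarity, and feasibility conditions block by block. The only cosmetic difference is that you carry an explicit $q_n \ge 0$ constraint (yielding a complementarity condition for the consumer) whereas the paper treats $q_r$ as free in Model~\eqref{model:market_player_PC_linear_demand} and obtains the equality $\alpha_r - \beta_r q_r - \pi_r = 0$; this does not affect the argument.
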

\proof 
We can express the \textit{relaxed} maximum social welfare Model~\eqref{model:max_social_welfare} as
\begin{alignat}{3} \label{model:max_social_welfare_generic}
\max_{\v{q},\v{x}}~ & p^{\textrm{RMaxSW}}(\v{q},\v{x}) = \sum_{r \in \mc{R}}(\alpha_{r} q_{r} - \tfrac{1}{2} \beta_{r} q_{r}^2) - \sum_{j \in \mc{J}} \v{C}_j^{\top} \v{x}_j \notag \\
\st~~ 				
	   & \v{x}_j \in \mc{X}^C_j 	\qquad \forall j \in \mc{J} \\
\notag & q_r = \sum_{j \in \mc{J}} x_{jr}    \quad \forall r \in \mc{R} \qquad (\textrm{dual vars: } \pi_r \in \Re)
\end{alignat}
where we let $\pi_r$ denote the dual variables associated with the supply-demand balance equations.
Let $L^{\textrm{SupplierPC}}_j$, $L^{\textrm{ConsumerPC}}$, and $L^{\textrm{RMaxSW}}$ denote the Lagrangian of \eqref{model:relaxed_individual_player_PC}, \eqref{model:market_player_PC_linear_demand}, and \eqref{model:max_social_welfare_generic}, respectively.

The KKT conditions of Model~\eqref{model:max_social_welfare_generic} are
\begin{subequations} \label{model:aa}
\begin{alignat}{4}
\frac{\partial L^{\textrm{RMaxSW}}(\v{q},\v{x})}{\partial x_{jr}} = 
\frac{\partial L^{\textrm{SupplierPC}}_j(\v{x}_j,\vpi)}{\partial x_{jr}} = \qquad \qquad \qquad & & \notag \\
\qquad \pi_r - C_{jr} 
+ \vlambda_j^{\top}\nabla_{x_{jr}} \v{g}_j(\v{x}_j) 
+ \vmu_j^{\top}\nabla_{x_{jr}} \v{h}_j(\v{x}_j)
- \nu_{jr} = 0 & & ~~\forall j \in \mc{J},r \in \mc{R} \label{eq:RelaxedMaxSW_partial_xjr} \\
\frac{\partial L^{\textrm{RMaxSW}}(\v{q},\v{x})}{\partial x_{js}} = 
\frac{\partial L^{\textrm{SupplierPC}}_j(\v{x}_j,\vpi)}{\partial x_{js}} = \qquad \qquad \qquad & & \notag \\
\qquad - C_{js} 
+ \vlambda_j^{\top}\nabla_{x_{js}} \v{g}_j(\v{x}_j) 
+ \vmu_j^{\top}\nabla_{x_{js}} \v{h}_j(\v{x}_j)
- \nu_{js} = 0 & & ~~\forall j \in \mc{J},s \notin \mc{R} \label{eq:RelaxedMaxSW_partial_xjs} \\
\frac{\partial L^{\textrm{RMaxSW}}(\v{q},\v{x})}{\partial q_{r}} = 
\frac{\partial L^{\textrm{ConsumerPC}}(\v{q})}{\partial q_{r}} = 
\alpha_r - \beta_r q_r - \pi_r = 0 & & ~~\forall r \in \mc{R} \label{eq:RelaxedMaxSW_partial_qr} \\
\eqref{eq:gj_perp_lambdaj},\eqref{eq:hj_perp_muj},\eqref{eq:xj_perp_nuj} & & ~~\forall j \in \mc{J} \label{eq:RelaxedMaxSW_complementarity_constraints} \\
q_r = \sum_{j \in \mc{J}} x_{jr}, \pi_r \in \Re & &~~\forall r \in \mc{R} \label{eq:RelaxedMaxSW_supply_demand_balance},
\end{alignat}
\end{subequations}
Meanwhile, the KKT conditions for player $j \in \mc{J}$ are identical to the complementarity system~\eqref{model:generic_nlp_kkt} and re-expressed in \eqref{eq:RelaxedMaxSW_partial_xjr}, \eqref{eq:RelaxedMaxSW_partial_xjs}, and \eqref{eq:RelaxedMaxSW_complementarity_constraints}.
The KKT conditions of Model~\eqref{model:market_player_PC_linear_demand} are given in \eqref{eq:RelaxedMaxSW_partial_qr}. Finally, supply-demand balance constraints are captured in \eqref{eq:RelaxedMaxSW_supply_demand_balance}.
\qed

\section{\cmtt{An example where global solution of a monolithic reformulation of a complementarity model fails}}
\label{app:counterexample}

\cmtt{
We have seen throughout our examples and numerical experiments a number of situations where the complementarity heuristics may fail:
From Section~\ref{sec:kkt_fails}, when an equilibrium exists, solving a complementarity problem might not yield an equilibrium
(since the solution set of the complementarity problem is a superset of the set of equilibria);
From Section~\ref{sec:comp_fails} and Table~\ref{tbl:Complementarity_results}	, when an equilibrium exists, solving a complementarity problem via a local solution of a monolithic reformulation failed to produce an equilibrium;
and from Section~\ref{sec:no_equilibrium}, when an equilibrium did not exist, solving a monolithic reformulation produced a candidate solution that, obviously, could not be an equilibrium.
}
\cmtt{However, we did see that \emph{global} solution of a monolithic reformulation of a complementarity problem was reasonably effective as a heuristic for finding an equilibrium when one exists.
The following example shows that, in general, we cannot rely on this property.
}

Consider a classical continuous two-player Nash-Cournot game where player $j \in \{1,2\}$ solves  
\begin{equation}
\tilde{p}_j(x_{-j}) = \max \{ -x_j x_{-j} : x_j \in \Re \} = \min \{ x_j x_{-j} : x_j \in \Re \}.
\end{equation}
This game has a unique equilibrium at $x_1 = x_2 = 0$. 
Note that player $j$'s objective function $x_j x_{-j}$ can be viewed as a special case of a more general objective function involving a linear inverse demand function $(\alpha - \beta(x_j + x_{-j}))x_j$ with $\beta=1$ and a cost term of $x_j^2 - \alpha x_j$ where $\alpha$ is sufficiently large such that $\alpha x_j$ dominates $x_j^2$.
The first-order KKT condition for player $j$ is
\begin{equation} \label{model:player_j_kkt_example_comp_fails}
x_{-j} = 0.
\end{equation}
As a consequence, combining the first-order KKT conditions for both players yields the unique equilibrium $x_1 = x_2 = 0$. 
However, the ``monolithic'' nonconvex QP optimization problem
\begin{equation} \label{model:monolithic_model_for_nonconvex_example}
\min \big\{ x_1 x_2 : x_j \in \Re, j \in \{1,2\} \big\}
\end{equation}
gives rise to the same KKT conditions and could therefore be used/solved to find equilibria.
One can see by inspection that no optimum exists to problem \eqref{model:monolithic_model_for_nonconvex_example} as it is clearly unbounded.
This example illustrates that solving a ``monolithic'' nonconvex QP to global optimality in order to hunt for equilibria may fail.

\end{document}